\titleformat{\paragraph}[runin]{\normalfont\normalsize\itshape}{\theparagraph}{}{}[.] %format paragraph italic and ser a period after it
\titlespacing{\paragraph}{0pt}{0pt}{*1} %remove spacing and add one characterspace after paragraph
\mathchardef\mhyphen="2D
\DeclareMathOperator{\Hom}{Hom}
\DeclareMathOperator{\pt}{pt}
\DeclareMathOperator{\Ob}{Ob}
\DeclareMathOperator{\Exact}{Exact}
\DeclareMathOperator{\Id}{Id}
\newcommand{\Spaces}{\mathscr{S}}
\DeclareMathOperator{\sset}{sSet}
\DeclareMathOperator{\Corr}{Corr}
\newcommand{\cInt}[2]{#2_#1}
\newcommand{\TTT}{\mathcal{T}}
\newcommand{\CORR}{\operatorname{CORR}}
\newcommand{\SPAN}{\operatorname{SPAN}}
\newcommand{\squCorns}[4]{
\begin{tikzpicture}[anchor=base, baseline,inner sep=0]%
\node[scale=0.7] (a) at (0,0){%
\begin{tikzcd}[ampersand replacement=\&,row sep=tiny,column sep=tiny]%
#1 \ar{r} \ar{d} \& #2\ar{d}\\
#3 \ar{r}\& #4
\end{tikzcd}%
};%
\end{tikzpicture}
}
\DeclareMathOperator{\Spc}{Spc}
\newcommand{\CAT}[1]{#1\text{-}\operatorname{Cat}}
\newcommand{\HaugsengU}{U_{\text{seg}}}
\newcommand{\Maps}{\mathcal{M}aps}
\newcommand{\pbsquare}{\square}
\newcommand{\pbcorner}{\lrcorner}
\newcommand{\pbcube}{\text{\mancube}}
\newcommand{\pbcubecorner}{\lrcorner_c}
\newcommand{\CCC}{\mathcal{C}}
\newcommand{\AinfinityNU}{\widetilde{\mathbbm{A}_\infty}}
\newcommand{\Ainfinity}{\mathbbm{A}_\infty}
\newcommand{\aug}[1]{A(#1)}
\newcommand{\grid}[1]{\operatorname{grid}}
\newcommand{\OrdSet}{\mathbb{\Delta}}
\newcommand{\AugOrdSet}{\mathbb{\Delta}_+}
\DeclareMathOperator{\point}{\pt}
\DeclareMathOperator{\AlgOper}{Alg}
\newcommand{\cubeInfinity}[1]{\square_\infty^{#1}}
\newcommand{\CorrCube}[1]{\square_{\Corr}^{#1}}
\newcommand{\CorrCubeSegal}[1]{\mathcal{I}(\square_{\Corr}^{#1})}
\newcommand{\MapsSegal}{\Maps_{\mathcal{I}}}
\newcommand{\sgn}[1]{\text{sgn}(#1)}
\newcommand{\DDD}{\mathcal{D}}
\newcommand{\ord}[1]{\left< #1 \right> }
\newcommand{\upperSeg}[1]{\mathcal{U}[#1]}
\newcommand{\lowerSeg}[1]{\mathcal{L}[#1]}
\newcommand{\ordop}[1]{\Delta_{#1} }
\newcommand{\tik}{\begin{tikzcd}}
\newcommand{\tak}{\end{tikzcd}}
\newcommand{\coprOver}[1]{\sqcup_{{}_{#1}}}
\newcommand{\prodOver}[1]{\times_{{}_{#1}}}
\newcommand{\RR}{\mathbbm{R}}
\def\latearrow#1#2#3#4{%
  \toks@\expandafter{\tikzcd@savedpaths\path[/tikz/commutative diagrams/every arrow,#1]}%
  \global\edef\tikzcd@savedpaths{%
    \the\toks@%
    (\tikzmatrixname-#2)% \noexpand\tikzcd@sourceanchor)%
    to%
    node[/tikz/commutative diagrams/every label] {$#4$}
    (\tikzmatrixname-#3)% \noexpand\tikzcd@targetanchor)
;}}
\def\stik#1#2{
\begin{tikzpicture}[baseline= (a).base]%
\node[scale=#1] (a) at (0,0){%
\begin{tikzcd}[ampersand replacement=\&]%
#2
\end{tikzcd}%
};%
\end{tikzpicture}
}
\def\nnstik#1#2{
\begin{tikzpicture}[baseline= (a).base]%
\node[scale=#1] (a) at (0,0){%
\begin{tikzcd}[ampersand replacement=\&,row sep=small,column sep=small]%
#2
\end{tikzcd}%
};%
\end{tikzpicture}
}
\newtheorem{Theorem}{Theorem}[section]
\newtheorem*{unnu-theorem}{Theorem}
\newtheorem{Proposition}{Proposition}[section]
\newaliascnt{Corollary}{Proposition}
\newtheorem{Corollary}[Corollary]{Corollary}
\newaliascnt{Lemma}{Proposition}
\newtheorem{Lemma}[Lemma]{Lemma}
\newaliascnt{Fact}{Proposition}
\newtheorem{Fact}[Fact]{Fact}
\newtheorem* {Claim}{Claim}
\newtheoremstyle{break}% name
  {}%         Space above, empty = `usual value'
  {}%         Space below
  {\itshape}% Body font
  {}%         Indent amount (empty = no indent, \parindent = para indent)
  {\bfseries}% Thm head font
  {.}%        Punctuation after thm head
  {\newline}% Space after thm head: \newline = linebreak
  {}%         Thm head spec
\theoremstyle{break}
\theoremstyle{definition}
\newaliascnt{Definition}{Proposition}
\newtheorem{Definition}[Definition]{Definition}
\newaliascnt{Construction}{Proposition}
\newtheorem{Construction}[Construction]{Construction}
\theoremstyle{remark}
\newaliascnt{Notation}{Proposition}
\newtheorem{Notation}[Notation]{Notation}
\newaliascnt{Remark}{Proposition}
\newtheorem{Remark}[Remark]{Remark}
\newaliascnt{Example}{Proposition}
\newtheorem{Example}[Example]{Example}
\def\sectionautorefname~#1\null{%
\S#1\null
}
\def\equationautorefname~#1\null{%
(#1)\null
}
\begin{document}

%+Title
\title{Higher Segal spaces and Lax $\Ainfinity$-algebras}
\author{Adam Gal, Elena Gal}
\maketitle

\begin{abstract}
The notion of a higher Segal space was introduced by Dyckerhoff and Kapranov in \cite{KapranovDyckerhoff} as a general framework for studying higher associativity inherent in a wide range of mathematical objects.
In the present work we formalize the connection between this notion and the notion of $\Ainfinity$-algebra. We introduce the notion of a "$d$-lax $\Ainfinity$-algebra object” which generalizes the notion of an $\Ainfinity$-algebra object in a precise sense.
We describe a construction that assigns to a simplicial object $S_\bullet$ in a category $\Spaces$ a datum of higher associators. We show that this datum defines a $d$-lax $\Ainfinity$-algebra object in the category of correspondences in $\Spaces$ precisely when $S_\bullet$ is a $(d+1)$-Segal object. 
More concretely we prove that for $n\geq d$ the  "$n$-dimensional associator" is invertible.
The so called "upper" and "lower" $d$-Segal conditions which originally come from the geometry of polytopes appear naturally in our construction as the two conditions which together imply the invertibility of the $d$-dimensional associator.
A corollary is that for $d=2$, our construction defines an $\Ainfinity$-algebra in the $(\infty,1)$-category of correspondences in $\Spaces$ with the $2$-Segal conditions implying invertibility of all associativity data.    
\end{abstract}

\tableofcontents
\section{Introduction}
In \cite{KapranovDyckerhoff} the authors introduce the concept of a $d$-Segal object and extensively study the $d=2$ case as a framework which generalizes many examples of associative algebras of categorical origin appearing in the literature.
One of the main classes of examples are Hall algebras, which include quantum groups.
The origin of the 2-Segal objects which give rise to these algebras is the Waldhausen construction and generalizations of it.

A generalization of Waldhausen construction studied in \cite{PoguntkeSegal} and \cite{DyckerhoffJasso} leads to a class of examples of $d$-Segal spaces for $d>2$. The result of the current work is that these also give rise to (suitably generalized) associative algebras.

As is already evident in very early works on the subject, all constructions of associative algebras arising in this way factor through a construction of \emph{correspondences} or \emph{spans}. This intermediate step will be the focus of this article. To then continue on to produce more familiar objects one needs to move from correspondences to an algebraic setting, such as vector spaces, linear categories, etc. This procedure is what is called a \emph{theory with transfer} in \cite{KapranovDyckerhoff} and several examples are given in loc. cit. Another important, and more involved, example is the construction of the stable $\infty$-category of D-modules on stacks as developed for example in \cite{gaitsbook}. We expect such theory with transfer to be useful in constructing categorifications of Hall algebras and quantum groups as explained in \cite{ourGeometricHall2}.

Let $\Spaces$ be an $(\infty,1)$-category closed on limits. The data of a $d$-Segal object in $\Spaces$ is a simplicial object \[
S:\OrdSet^{op}\to\Spaces
\]
satisfying certain combinatorially defined conditions arising from the geometry of $d$-dimensional polytopes.

The main motivations for the definition is that in the $d=2$ case a 2-Segal object gives rise to an associative algebra object in the category of correspondences (or spans) $\Corr(\Spaces)$.

In this article we show that this observation can both be made precise and be extended to the $d>2$ case. We do this by constructing a system of data which we call the "Hall Algebra" $H(S)$ of a simplicial object $S$ and showing that the $d$-Segal conditions on $S$ are equivalent to requiring that $H(S)$ be a $(d-1)$-lax version of a non-unital $\Ainfinity$-algebra object.

Namely, we show the following:

\theoremstyle{plain}
\newtheorem*{MainTheorem}{Theorem}
\begin{MainTheorem}
$S$ is a $d$-Segal object in $\Spaces$ iff $H(S)$ defines a $(d-1)$-lax $\Ainfinity$-algebra object in the category of correspondences in $\Spaces$.
\end{MainTheorem}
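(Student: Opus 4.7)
The plan is to break the equivalence into three steps: construct $H(S)$ as a functorial datum of correspondences, characterize the invertibility of each ``$n$-dimensional associator'' in terms of a pullback condition on $S$, and match these pullback conditions cell-by-cell against the upper and lower $d$-Segal conditions.

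First I would describe $H(S)$ explicitly. The $n$-ary multiplication should be a correspondence whose apex is $S_n$, with the ``long edge'' map $S_n \to S_1$ (induced by the inclusion $[0,n]$) as the target leg and the ``spine'' map $S_n \to S_1^{\times n}$ (induced by $[0,1], \dots, [n-1,n]$) as the source leg. Higher associators are indexed by the cells of an $\Ainfinity$-type polytope; for each such cell one assembles a correspondence whose two legs are the canonical maps $S_m \to \lim_T S$ for $T$ running over two distinguished triangulations of an ambient cyclic polytope. The simplicial structure of $S$ ensures that face and degeneracy identities hold strictly, so $H(S)$ always carries the shape of a \emph{lax} $\Ainfinity$-structure in $\Corr(\Spaces)$, with no Segal hypothesis required for this step.

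Next I would identify precisely when these associator correspondences are invertible. A correspondence in $\Corr(\Spaces)$ is an equivalence iff both its legs are equivalences. Because both legs are of the form $S_m \to \lim_T S$ for a triangulation $T$ of a cyclic polytope, their invertibility is, by definition, exactly the condition imposed by $T$ in the Dyckerhoff--Kapranov setup. Invoking the standard implication that $d$-Segal implies $n$-Segal for all $n \geq d$, one obtains the forward direction: if $S$ is $d$-Segal, all associators above the critical dimension are invertible, and hence $H(S)$ is $(d-1)$-lax.

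For the converse and for the critical dimension itself, I would use that the borderline associator is built from two legs corresponding respectively to the upper and lower triangulations of the relevant cyclic polytope. Its invertibility is therefore equivalent to the conjunction of the upper and lower $d$-Segal conditions, which together constitute the full $d$-Segal condition. Invertibility of all higher associators then follows automatically. The main obstacle I anticipate is the combinatorial bookkeeping: making precise the dictionary between the polytope cells indexing the $(d-1)$-lax $\Ainfinity$-structure and the pairs of triangulations of cyclic polytopes controlling the $d$-Segal condition, and verifying that the simplicial functoriality of $S$ yields exactly the pullback squares needed for the correspondences to compose coherently across cells. Once this dictionary is in place, the theorem reduces to a direct comparison of the two invertibility conditions at each level.
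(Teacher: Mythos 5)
Your overall strategy coincides with the paper's: build the associator data functorially with no Segal hypothesis (the paper's $H_{comb}$ followed by the right Kan extension of $S$), characterize invertibility of each $n$-dimensional associator by two distinguished pullback conditions, and identify those with the upper and lower triangulations of a cyclic polytope. The ``combinatorial bookkeeping'' you defer is exactly where the bulk of the paper's work lies (\autoref{thm:lowerupperSegal}, identifying $(H_d)^{00\ldots0}_l$ and $(H_d)^{00\ldots0}_u$ with $\lowerSeg{d+1,d}$ and $\upperSeg{d+1,d}$ via Gale's evenness criterion), including the non-obvious step that the directions of the associator cube not indexed by even (resp.\ odd) subsets contribute only degenerate, hence automatically pullback, subcubes. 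A smaller caveat: your criterion ``a correspondence is invertible iff both legs are equivalences'' is correct for $1$-morphisms in $\Corr_1(\Spaces)$, but the associators are $n$-morphisms in $\Corr_n(\Spaces)$, and the equivalence of their invertibility with the cubical criterion of \autoref{invertiblecubes} is itself a result that must be established (\autoref{Prop:CORRInvertibleCorr}).

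There is, however, a genuine gap in your converse. Invertibility of the critical ($d$-dimensional) associator is equivalent to $S$ being adapted to the two triangulations $\lowerSeg{d+1,d}$ and $\upperSeg{d+1,d}$ of the \emph{single} polytope $C_d(d+1)$; this is strictly weaker than the full $d$-Segal condition, which demands adaptedness to $\lowerSeg{n,d}$ and $\upperSeg{n,d}$ for every $n\geq d$. So your assertion that the borderline associator's invertibility ``constitutes the full $d$-Segal condition,'' from which ``invertibility of all higher associators then follows automatically,'' has the logic inverted: in the converse direction one must \emph{use} the invertibility of every $H_n$, $n\geq d$ --- each contributing only the one-step conditions $\lowerSeg{n+1,n}$, $\upperSeg{n+1,n}$ --- and then prove that this family of one-step conditions bootstraps to the full $d$-Segal condition. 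The paper does this in \autoref{Prop:DSegalBySteps} by an induction in which a triangulation of $C_n(n+1)$ induces a cover of $C_d(n+1)$ by copies of $C_d(n)$, combined with \autoref{Prop:DSegalD+1Segal}. Without this step your equivalence is only established at the critical polytope, and the theorem as stated does not follow.
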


In \autoref{sec:Ainfinity} we give a definition an $\Ainfinity$-algebra object in a monoidal $(\infty,1)$-category by providing a datum of compatible associators which are indexed by certain canonical cubes in the category of ordered sets (\autoref{Def:AInfinityDatum}). 
We show in \autoref{Prop:AinfinityDatumLurie} that this datum defines a "usual" $\Ainfinity$-algebra object as defined for example in \cite[Definition 4.1.3.16]{LurieAlgebra}. We generalize the above point of view to define what we call $k$-lax $\Ainfinity$-algebra objects, i.e. we only require invertibility of the associator cubes starting from dimension $k+1$. In this language a usual non-unital $\Ainfinity$-algebra is a $1$-lax $\Ainfinity$-algebra.
The advantage of the packaging  of the data that we propose in \autoref{Def:AInfinityDatum} is that the compatibility conditions on this "cubical" data are straightforward to formulate and are naturally connected to the higher Segal conditions as we show in the proof of \autoref{thm:HigherSegalLax}.

In \autoref{sec:CombinatorialHall}, with the above point of view in mind, we construct the Hall algebra data in two steps: first combinatorially as a system of associator cubes of correspondences in  $\sset^{op}$ and then by applying the Kan extension of $S_{\bullet}$ obtaining a system of cubes of correspondences in $\Spaces$. 

Recall that the category of correspondences in $\Spaces$ has the same space of objects and the space of 1-morphisms between the objects $A,B \in \Spaces$ is given by the space of diagrams (called correspondences or spans)
\[
\stik{1}{
{} \& X_{AB} \ar{dl} \ar{dr} \& {} \\
A \& {} \& B
}
\]
The composition of correspondences is given by taking pullbacks:
\begin{equation*}
\stik{1}{
{} \&{} \& X_{ABC} \ar{dr} \ar{dl} \ar[dd, phantom, "\wedge", very near start]\\
{} \& X_{AB} \ar{dr} \ar{dl} \& {} \& X_{BC} \ar{dr} \ar{dl} \\
A \& {} \& B \& {} \& C
}
\end{equation*}

One can also define higher morphisms in the category of correspondences in a natural way. It was shown in \cite{HaugsengSpans} that this data can be used to construct $(\infty,d)$-categories of correspondences for all $d$. 

The $1$-dimensional associator cube in $\Corr(\sset^{op})$ is the correspondence
\[
\includegraphics[scale=0.5]{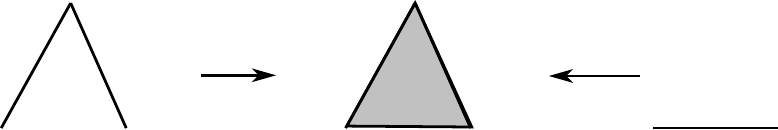}
\]
To connect this to a familiar example note that if we took $S_\bullet$ to be the Waldhausen construction for an abelian category $\CCC$, applying it (or more precisely its canonical extension to $\sset$) to the above would yield the correspondence containing the data for multiplication in the Hall algebra associated to $\CCC$, namely the correspondence
\[
\stik{1}{
\Ob(\CCC)\times \Ob(\CCC) \& \Exact(\CCC) \ar{l}[above]{ends} \ar{r}{mid} \& \Ob(\CCC)
}
\]
\[
ends(0\to U\to V \to W \to 0)=U,W\phantom{MM}
mid(0\to U\to V \to W \to 0)=V
\]
The familiar multiplication in the Hall algebra is obtained from this correspondence via a theory with transfer to vector spaces, see \cite{KapranovDyckerhoff} for details.

The $2$-dimensional associator cube is 
\[
\includegraphics[scale=0.8]{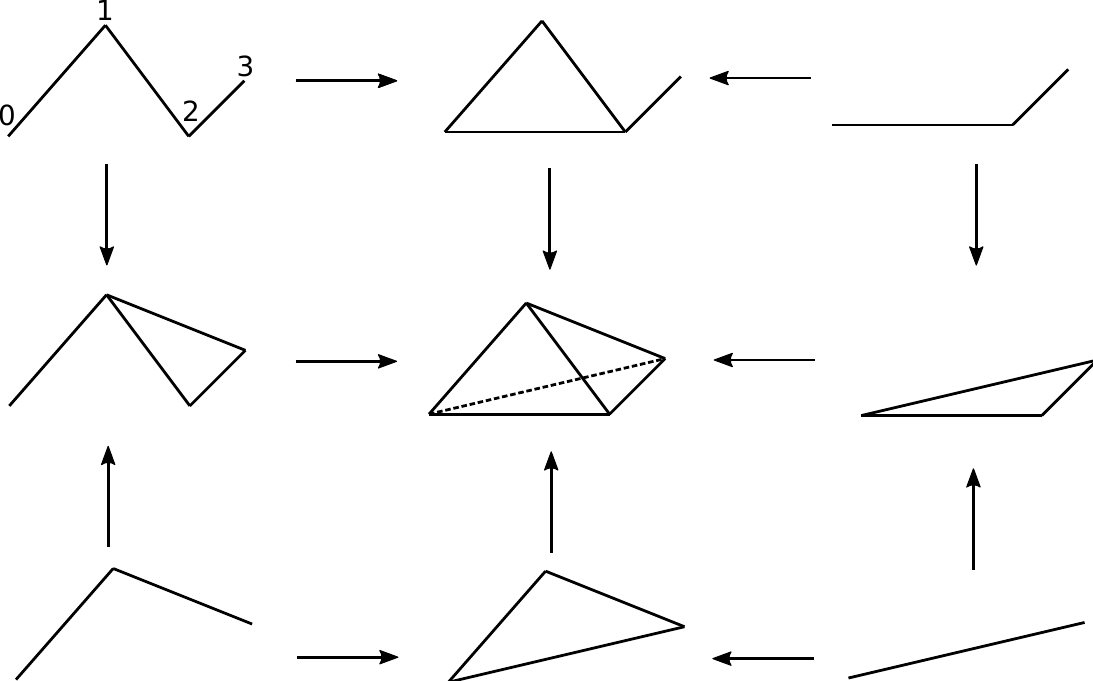}
\]
We call the image of this cube under $S_\bullet$ an \emph{invertible cube of correspondences} if the resulting upper-right and lower-left squares are pullbacks. This definition is natural since the compositions of the sides of the above square are given by pullbacks. This gives conditions on $S$ which are exactly the "upper" and "lower" 2-Segal conditions mentioned in \cite{PoguntkeSegal} and corresponds in the case of the Waldhausen construction to the equivalence between flags and co-flags. 

An observation we make in this article is that the same is true in all dimensions. That is, the higher associators are \emph{invertible $n$-cubes of correspondences} iff the upper and lower $n$-Segal conditions are satisfied. These conditions are defined by certain canonical triangulations of $n$-dimensional polytopes, that can be informally described as prescribing a way for the higher and the lower dimensional polytopes to "fit together". The property of being $n$-Segal can be defined in terms of these two triangulations. Following on the example of the square above, an $n$-cube of correspondences in the category $\Spaces$ is defined to be invertible whenever a certain pair of its subcubes are pullback diagrams in $\Spaces$ (see \autoref{invertiblecubes}). Denote these sub-cubes by $L_n$ and $U_n$. We show that 
\theoremstyle{plain}
\newtheorem*{MainTheorem1}{Theorem}
\begin{MainTheorem1}
$S$ is a lower (resp.upper) $d$-Segal object in $\Spaces$ iff it sends every $L_n$ (resp.$U_n$), $n\geq d$ to a pullback cube in $\Spaces$.
\end{MainTheorem1}
This is \autoref{thm:HigherSegalCubes} and its reformulation \autoref{thm:lowerupperSegal}. This provides a concrete interpretation for the lower and upper Segal conditions in terms of the data of the higher associators constructed in \autoref{sec:CombinatorialHall}. 

\begin{Remark}
The traditional approach to $\Ainfinity$-algebras originates in Stasheff's work \cite{stasheffHomotopy} and calls for encoding higher associativity data using an operadic approach by polytopes called \emph{associahedra}. It was pointed out to us by G.Segal that the idea to use cubes for this  objective was already indicated in the appendix to his work \cite{segalCategories}. The result of our article can be seen as a concrete application of this point of view which connects it to the $d$-Segal spaces approach. The characterization of the $d$-Segal conditions via invertible cubes of correspondences from \autoref{thm:HigherSegalCubes} further suggests 
that a purely cubical approach will be useful in the study of $\infty$-categorical constructions. To our knowledge such a framework does not as of yet exist. For example in \cite{gaitsbook} the authors base some of their fundamental results on the conjectural existence of a cubical model for $(\infty,2)$-categories.
\end{Remark}

The concept of 2-Segal space was independently proposed by Gálvez-Kock-Tonks under the name of \emph{decomposition space} in \cite{decomposition1},\cite{decomposition2},\\ \cite{decomposition3} with an outlook to applications of a more combinatorial nature. The objective of both groups of authors was to define a unified framework for various examples of Hall-algebra like constructions appearing in the literature. Their insight was that the object $S_\bullet$ naturally appears in these examples and that their associativity can be expressed via its properties. As we noted in the beginning of this introduction the main source of examples is the Waldhausen construction. This claim can be made formal as studied in \cite{scheimbauer2Segal}.

The connection between the $2$-Segal conditions and associativity expressed in a various languages was first studied in \cite{KapranovDyckerhoff} and later also in \cite{Penney2Segal}, \cite{WaldeOperads}. The main difference of our approach in that case is that it provides an explicit description of the higher associators via the construction from \autoref{sec:CombinatorialHall}. We note that the origin of this construction is in our article \cite{ourGeometricHall1}.

We would also like to mention the articles \cite{stern2Segal} and \cite{WaldeHigher} that appeared since the publication of the previous version of this article. \cite{WaldeHigher} studies a characterization of higher Segal conditions in terms of certain categorically characterized cubes being pullbacks. These cubes have a significant overlap with the ones that appear in \autoref{thm:lowerupperSegal} and it would be interesting to explore this connection.

% \subsection{Further directions}
% \input{FurtherDirections.tex}

\subsection{Notations and technical remarks}

\label{sec:Notations}
In what follows the category $\Spaces$ will denote a complete $(\infty,1)$-category.

In this article we will use two categories of ordered finite sets:

$\AugOrdSet$ - the augmented category of finite ordered sets. The elements of $\AugOrdSet$ will be denoted by \[\ord{0}=\emptyset, \ord{1}=\{0\}, \ord{2}=\{0\rightarrow 1\}, \ord{3}=\{0\rightarrow 1\rightarrow 2\}, \ldots\]

$\OrdSet$ - the category of \emph{nonempty} ordered finite sets. The elements of $\OrdSet$ will be denoted by \[\ordop{0}=\{0\}, \ordop{1}=\{0\rightarrow 1\}, \ordop{2}=\{0\rightarrow 1\rightarrow 2\}, \ldots\]

$\OrdSet^{op}$ appears in the definition of the simplicial object 
\[
S_\bullet: \OrdSet^{op} \rightarrow \Spaces
\]

For our purposes in this note we will always consider the the canonical extension of $S_\bullet$ to $\sset^{op}$, which exists because the category $\Spaces$ is complete. 

By abuse of notation we shall denote the right Kan extension of $S_\bullet$ to $\sset^{op}$ by $S_\bullet$ as well. The above is the reason for the notation we choose for the elements of $\OrdSet^{op}$, i.e. in our constructions in this paper they appear as elements of $\sset^{op}$. 

We will shorten $S_{\ordop{n}}\CCC$ to $S_{n}$.

\subsection{Acknowledgements}
We would like to thank Kobi Kremnitzer for many fruitful and encouraging discussions around this topic. We would also like to thank Andre Henriques, Andy Tonks, Claudia Scheimbauer, Graeme Segal, and Mark Penney for useful comments and discussions related to the subject of this article.

The authors are supported by EPSRC grant [R50311/GA001].

\section{\texorpdfstring{$\Ainfinity$}{A infinity}-algebras and associator cubes}
\label{sec:Ainfinity}
In this section we outline a convenient general point of view on higher associativity data. Namely we introduce a notion of $\AinfinityNU$ object $A_\bullet$ in a monoidal $(\infty,n)$-category. We show that in the case of the $(\infty,1)$-category this notion is equivalent to a usual notion of an $\Ainfinity$-algebra object as defined e.g. in \cite{LurieAlgebra}. 

The $\AinfinityNU$ object is given by a compatible system of $n$-cubes called \emph{associator cubes}.

\subsection{The associator cubes}
\label{sec:AssociativityCubes}

\begin{Definition}
\label{Def:1catncube}
An $n$-cube in an ordinary category $\CCC$ is a diagram indexed by the poset $\{ 0\rightarrow 1\}^n$. 
\end{Definition}
We can easily generalize this to $(\infty,1)$ categories:
\begin{Definition}
\label{Def:AbstractNCube}
Denote by $\square^n$ the simplicial set $\ordop{1}^n$, i.e. the nerve of the poset $\{ 0\rightarrow 1\}^n$ 
\end{Definition}

\begin{Definition}
\label{Def:Infty1ncube}
An $n$-cube in an $(\infty,1)$-category $\CCC$ is a map $\square^n\to\CCC$.
\end{Definition}

\begin{Definition}
\label{Def:degenerateCube}
A cube $C:\square^n\to\CCC$ is called \emph{degenerate} if it factors through some coordinate projection $p:\square^n\to\square^m, m<n$.
\end{Definition}

Let $\CCC$ be a monoidal $(\infty,1)$-category. Conceptually, an associative algebra object $A \in \CCC$ is provided by the datum of multiplication $A \otimes A \rightarrow A$ and a compatible system of higher associators. To provide a precise description of this data let us consider the following family of commutative cubes in the category $\AugOrdSet$:

\begin{Proposition}
\label{associatorCubesProp}
For any $n\geq 2$ there is a unique commutative $n$-cube in $\AugOrdSet$ that contains all of the surjections $\ord{j}\to\ord{j-1}$ for all $1\leq j\leq n+1$.
\end{Proposition}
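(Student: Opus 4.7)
The plan is to construct the cube explicitly by a combinatorial recipe and then to show that the stated property pins down this structure (up to the obvious relabeling of the coordinates of $\square^n$).

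For existence, I would define $C:\square^n\to\AugOrdSet$ on the vertex $I\in\{0,1\}^n$ by $C(I)=\ord{n+1-|I|}$, and on the edge $I\to I\cup\{i\}$ (flipping coordinate $i$ from $0$ to $1$) by the degeneracy $\sigma_{i-1-|\{j\in I\,:\,j<i\}|}:\ord{n+1-|I|}\to\ord{n-|I|}$. The picture to keep in mind is that $\ord{n+1}$ has $n$ gaps between consecutive elements, the $i$-th coordinate of the cube corresponds to closing the $i$-th gap, and the subscript records the position of that gap among those still open. As $i$ ranges over $\{1,\dots,n\}\setminus I$ the subscript ranges over $\{0,\dots,n-|I|-1\}$, so every degeneracy $\sigma_k:\ord{j}\to\ord{j-1}$ occurs as an edge of $C$, confirming that all required surjections are present. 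Commutativity of each square face reduces to the simplicial identity $\sigma_a\sigma_b=\sigma_{b-1}\sigma_a$ for $a<b$: the two paths around the face with coordinates $i<i'$ at the vertex $I$ give two compositions which close the same two gaps and therefore agree.

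For uniqueness, I would use that the $n$ degeneracies $\sigma_0,\dots,\sigma_{n-1}:\ord{n+1}\to\ord{n}$ all share the source $\ord{n+1}$, so they must emanate from a single vertex with at least $n$ outgoing edges; but in $\square^n$ only the initial vertex $(0,\dots,0)$ has this property. Hence $C((0,\dots,0))=\ord{n+1}$, and by iterating this argument at each level, $C(I)=\ord{n+1-|I|}$ for every $I$. The $n$ outgoing edges at the source are then a permutation of $\{\sigma_0,\dots,\sigma_{n-1}\}$; after fixing this permutation (equivalently, fixing the labeling of the coordinates of $\square^n$), commutativity of every square face together with the simplicial identities propagates uniquely to determine every remaining edge by induction on the level.

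The main obstacle is making the propagation step in the uniqueness argument rigorous. The key point to justify is that a composition $\sigma_a\sigma_b$ of two degeneracies is determined by the partition it induces on $\ord{n+1}$, and that the only identification among such compositions is precisely $\sigma_a\sigma_b=\sigma_{b-1}\sigma_a$ for $a<b$. This gives just the right rigidity at each square face to fix the fourth edge once the other three are determined. A clean induction on $n$, obtained by restricting $C$ to the two $(n-1)$-dimensional boundary faces $v_n=0$ and $v_n=1$ and invoking the proposition for $n-1$, then packages the argument into a rigorous proof.
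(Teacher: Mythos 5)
Your construction and your uniqueness argument are essentially identical to the paper's: the paper likewise labels the vertex $\epsilon$ by $\ord{n+1-\operatorname{depth}(\epsilon)}$, assigns the outgoing degeneracies in lexicographic order of the flipped coordinate (your gap-counting formula is exactly this), and proves uniqueness by induction on depth using the rigidity of each square face via the simplicial identity $\sigma_a\sigma_b=\sigma_{b-1}\sigma_a$ (with the same case split depending on whether the two indices are adjacent). The proposal is correct, up to the same level of informality as the paper at the initial step (that the $\ord{n+1}$ vertex must be the initial vertex).
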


We call this cube the "$n$-dimensional associator cube".
The paths from $\ord{n+1}$ to $\ord{1}$ on the $n$-dimensional associator cube correspond exactly to all ordered ways of bracketing $n$ letters.

\begin{Example}
The $1$-dimensional associator cube is the arrow
\[
\stik{1}{
\ord{2} \ar[two heads]{r} \& \ord{1}
} \leftrightarrow ((X,Y)\mapsto XY)
\]
\end{Example}

\begin{Example}
The $2$-dimensional associator cube is the square
\[
\stik{1}{
\ord{3} \ar[two heads]{r} \ar[two heads]{d} \& \ord{2} \ar[two heads]{d} \\
\ord{2} \ar[two heads]{r} \& \ord{1}
} \leftrightarrow \left(\alpha: (XY)Z\to X(YZ)\right) 
\]
\end{Example}

\begin{Example}
The $3$-dimensional associator cube is the cube 
\begin{gather*}  
\stik{1}{
 \& \ord{3} \ar[two heads]{rr} \ar[two heads]{dd} \& {} \& \ord{2} \ar[two heads]{dd}\\
\ord{4} \ar[two heads,crossing over]{rr} \ar[two heads]{dd} \ar[two heads]{ur} \& {} \& \ord{3} \ar[two heads]{dd} \ar[two heads]{ur} \& {}\\
{} \& \ord{2} \ar[two heads]{rr} \& {} \& \ord{1}\\
\ord{3} \ar[two heads]{rr} \ar[two heads]{ur} \& {} \& \ord{2} \ar[two heads]{ur}
\latearrow{commutative diagrams/crossing over,commutative diagrams/two heads}{2-3}{4-3}{}
}
\\
\updownarrow\\
\stik{1}{
{} \& (X(YZ))W \ar{r}{\alpha_{{}_{X,Y\cdot Z,W}}}\&  X((YZ)W) \ar{dr}{X\cdot \alpha}\\
((XY)Z)W \ar{ur}{\alpha\cdot W} \ar{dr}{\alpha_{{}_{X\cdot Y,Z,W}}} \& {} \&  {} \& X(Y(ZW))\\
{} \& (XY)(ZW) \phantom{X}\ar[shorten <=-1em,shorten >=-1em]{r}{\Id_{XY}\cdot\Id_{ZW}} \& \phantom{X}(XY)(ZW) \ar{ur}{\alpha_{{}_{X,Y,Z\cdot W}}}
}
\end{gather*}

Note that this diagram is the familiar pentagon identity diagram, with an extra edge (the bottom middle) which is $\Id$.

\end{Example}

\begin{proof}[Proof of \autoref{associatorCubesProp}]
Denote by $p^m_i$ for $i=1,\ldots, m-1$ the surjection $\ord{m}\to\ord{m-1}$ which sends $i,i+1$ to $i$. 

Now fix $n$. Each vertex of an $n$-cube is indexed by a sequence $\epsilon=(\epsilon_1,\ldots,\epsilon_n)$ where $\epsilon_i=0,1$.
Let $depth(\epsilon)=\sum \epsilon_i$. Our cube is given as follows:
\begin{itemize}
    \item At vertex $\epsilon$ we put the object $\ord{n+1-depth(\epsilon)}$.
    \item At each vertex $\epsilon$ let $m=n+1-depth(\epsilon)$ and order the arrows going out of it lexicographicaly. We assign to the arrows the maps $p^{m}_i$ according to this order.
\end{itemize}
It is straightforward to check that this cube commutes. It is unique (up to reordering the indices) by induction on $depth(\epsilon)$:

At depth 0 we must have exactly one vertex assigned $\ord{n+1}$ and $n$ arrows leaving it, so each arrow must be assigned a different $p^{n+1}_i$. This fixes an order on the indices.

Suppose the claim is true through depth $k$. Consider any 2 vertices at depth $k+1$. They are contained in a unique square starting in depth $k$ and ending in depth $k+2$. Suppose the maps from the depth $k$ vertex are $p^{n+1-k}_i,p^{n+1-k}_j$ with $j>i$. If $j>i+1$ then the maps from depth $k+1$ to $k+2$ must be $p^{n+1-(k+1)}_{j-1}$ and $p^{n+1-(k+1)}_{i}$ respectively. If $j=i+1$, then the maps must both be $p^{n+1-(k+1)}_{i}$. In any case they are determined uniquely by the previous depth and we are done.
\end{proof}

Note the following easily verified fact:

\begin{Lemma}
\label{Lem:AssociatorCubeBoundary}
All cubical faces in the boundary of the $n$-dimensional associator cube are ordered disjoint unions of lower dimensional associator cubes and their degeneracies (as defined in \autoref{Def:degenerateCube}).
\end{Lemma}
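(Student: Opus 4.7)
The plan is to reinterpret the combinatorics of the associator cube through ordered partitions: identify each vertex $\epsilon\in\{0,1\}^n$ with the partition of $\{1,\ldots,n+1\}$ in which $i$ and $i+1$ lie in the same block precisely when $\epsilon_i=1$. Under this bijection the object $\ord{n+1-\depth(\epsilon)}$ is exactly the ordered set of blocks, and each generating edge of the cube, which flips some $\epsilon_i$ from $0$ to $1$, merges the two blocks adjacent at position $i$ by one of the surjections $p^m_j$ used in the construction from the proof of \autoref{associatorCubesProp}. A cubical face is then specified by a decomposition $\{1,\ldots,n\}=A\sqcup B\sqcup C$, where $A$ indexes the coordinates fixed to $0$, $B$ the coordinates fixed to $1$, and $C$ the free coordinates. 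The walls at positions of $A$ are present at every vertex of the face, so they cut $\{1,\ldots,n+1\}$ into sub-blocks $I_1<\cdots<I_r$ with $r=|A|+1$. Writing $m_j=|I_j|$ and letting $b_j,c_j$ denote the number of positions of $B,C$ internal to $I_j$, we have $b_j+c_j=m_j-1$ and $C=\bigsqcup_j C_j$ with $|C_j|=c_j$.

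Next I would identify the restriction of the face to each sub-block. The walls in $B$ internal to $I_j$ are absent at every vertex of the face, so $I_j$ is pre-collapsed to an ordered set of size $m_j-b_j=c_j+1$. The partition of $I_j$ at any vertex of the face depends only on the $C_j$-coordinates, and varying these makes $I_j$ trace out a $c_j$-dimensional commutative cube whose top vertex is $\ord{c_j+1}$ and whose $c_j$ outgoing top edges are given, in the left-to-right order of $C_j$, by the full collection of surjections $p^{c_j+1}_{1},\ldots,p^{c_j+1}_{c_j}$. By the uniqueness clause of \autoref{associatorCubesProp} this sub-cube is the $c_j$-dimensional associator cube of $\ord{c_j+1}$. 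In the degenerate case $c_j=0$ the sub-block contributes the constant cube at $\ord{1}$, which is degenerate in the sense of \autoref{Def:degenerateCube}.

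Finally, because the value of the face at a vertex is the concatenation of the values on its sub-blocks, which is exactly the disjoint union in $\AugOrdSet$, and because the coordinates split as $C=\bigsqcup_j C_j$, the face cube factorises as the ordered disjoint union of the $r$ sub-cubes produced above, proving the claim. I expect the main obstacle to be the identification in the second paragraph: once the translation to ordered partitions is in place, the argument reduces to recognising that all $c_j$ surjections out of $\ord{c_j+1}$ appear at the top vertex of the sub-cube and then invoking \autoref{associatorCubesProp}, so the real content of the lemma lies in the partition reformulation of the cube.
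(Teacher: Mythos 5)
Your argument is correct, and it is a genuine verification of a claim the paper leaves as "easily verified" with only the $n=2$ example displayed; your ordered-partition reading of the cube (vertices as interval decompositions of $\{1,\ldots,n+1\}$, edges as removals of the wall at position $i$) is precisely the structure implicit in the construction given in the proof of \autoref{associatorCubesProp}, and your sub-block decomposition of a face, indexed by the coordinates fixed to $0$, reproduces the paper's $n=2$ example verbatim. The only step deserving a word of care is the appeal to the uniqueness clause of \autoref{associatorCubesProp}: as stated it concerns cubes containing \emph{all} surjections $\ord{j}\to\ord{j-1}$, but its proof proceeds by induction from the depth-zero data alone, which is exactly what you verify for each sub-block (all $c_j$ distinct surjections $p^{c_j+1}_i$ leaving the top vertex $\ord{c_j+1}$), so the invocation is sound.
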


\begin{Example}
The boundary of the 2-dimensional associator cube decomposes as follows:
\[
\stik{1}{
{} \& {} \& \ord{2}\ar[two heads]{r}[below,yshift=-1.5em]{\bigsqcup}\&\ord{1}\\
{} \& {} \& \ord{1}\ar[equals]{r}\& \ord{1}\\
\ord{1} \ar[equals]{d}[right,xshift=1.7em]{\bigsqcup} \& \ord{2}\ar[two heads]{d} \& \ord{3} \ar[two heads]{r} \ar[two heads]{d} \& \ord{2} \ar[two heads]{d} \\
\ord{1} \& \ord{1} \& \ord{2} \ar[two heads]{r} \& \ord{1}
}
\]
\end{Example}

\begin{Definition}
\label{Def:AInfinityDatum}
Let $\CCC$ be a monoidal $(\infty,1)$-category. An $\AinfinityNU$-object $A_\bullet$ in $\CCC$ is a system of $n$-cubes $A_n$ in $\CCC$ which are compatible with respect to \autoref{Lem:AssociatorCubeBoundary}. That is, the boundary of $A_n$ is composed via the monoidal product in $\CCC$ of lower dimensional $A_m$ and their degeneracies as prescribed by the decomposition of the faces of the $n$-dimensional associator cube.
\end{Definition}

In the rest of the section we prove the following:

\begin{Theorem}
\label{Prop:AinfinityDatumLurie}
An $\AinfinityNU$ object $A_\bullet$ in $\CCC$ gives rise to a non-unital $\Ainfinity$-algebra in $\CCC$ in the sense of \cite[Definition 4.1.3.16]{LurieAlgebra}.
\end{Theorem}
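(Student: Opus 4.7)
The plan is to construct, from the $\AinfinityNU$-datum $A_\bullet$, a functor realizing Lurie's definition of a non-unital $\Ainfinity$-algebra. Lurie's formulation encodes such an object as a functor $\mathrm{Ass}_{nu}^\otimes \to \CCC^\otimes$ preserving inert morphisms, equivalently a coherent system of $n$-ary multiplications $m_n\colon A^{\otimes n} \to A$ together with all higher associativity coherences. The strategy is to package the cube data into such a coherent system and then to verify the required axioms by induction on cube dimension using the boundary decomposition.

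First I would extract the basic data. The object $A \in \CCC$ is the common value $\ord{1}$ at the terminal corner of every $A_n$, which is well-defined by the boundary compatibility. The binary product $\mu\colon A \otimes A \to A$ is the $1$-cube $A_1$, and the pentagonator/associator $\alpha\colon (A \otimes A) \otimes A \to A \otimes (A \otimes A)$ is read off from $A_2$ as in the example in the text. The general claim to prove is that $A_n$ simultaneously encodes the $(n+1)$-ary multiplication $m_{n+1}$ and the coherent $n$-morphism comparing all its factorizations into iterated binary multiplications.

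The main construction uses the observation, implicit in \autoref{associatorCubesProp}, that the monotone paths from $\ord{n+1}$ to $\ord{1}$ in the $n$-dimensional associator cube are essentially in bijection with the complete parenthesizations of $n+1$ symbols (together with some degenerate paths whose extra edges are identities, as is visible in the pentagon example). Each path picks out a specific composition of binary multiplications $A^{\otimes(n+1)} \to A$, and the filled cube $A_n$ supplies the coherent equivalence between all of them. I would extract the $(n+1)$-ary multiplication as the value along any chosen path, and package the full cube into the corresponding higher simplex of the nerve of $\mathrm{Ass}_{nu}^\otimes$.

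Verification of Lurie's axioms would proceed by induction on $n$, using the boundary compatibility from \autoref{Def:AInfinityDatum} together with \autoref{Lem:AssociatorCubeBoundary}. The crucial point is that the decomposition of $\partial A_n$ into lower-dimensional associator cubes joined via the monoidal product matches the operadic composition of $k$-ary multiplications in $\mathrm{Ass}_{nu}^\otimes$; inductively one sees that the restrictions of $A_n$ to faces reproduce the lower-stage coherences already constructed. The Segal/inert conditions are then automatic since the cube data naturally lives over the inert skeleton via $\otimes$. The main obstacle, and the heart of the proof, will be to make the correspondence between the cubical combinatorics of iterated surjections in $\AugOrdSet$ and Lurie's operadic combinatorics on $\mathrm{Fin}_*$ precise at the level of $\infty$-categorical coherence; it will likely be cleanest to exhibit both the cubical and the operadic data as sections of a common fibration indexed by surjections of ordered sets, so that the passage between them is a straightforward change of presentation rather than an ad hoc translation.
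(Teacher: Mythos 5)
Your overall strategy --- induction on cube dimension, with the boundary decomposition of \autoref{Lem:AssociatorCubeBoundary} supplying the compatibility between stages --- is the same as the paper's, but the step you yourself flag as ``the main obstacle, and the heart of the proof'' is precisely the content a complete proof must supply, and your proposal does not supply it. Concretely: Lurie's non-unital $\Ainfinity$-algebras are maps of planar $\infty$-operads out of $N(\OrdSet_s^{op})$ (indexed over $\OrdSet^{op}$, not over $\mathrm{Fin}_*$ as you write at the end), and the mechanism for building such a map dimension by dimension is already in \cite{LurieAlgebra}: the category $\AlgOper_{\Ainfinity}(\CCC)$ is the limit of the categories of $\mathbb{A}_n$-algebras, the subcategory $\tau_{n+1}$ is obtained from $\tau_{n+1}^\circ$ by gluing in a single $n$-cube along the fundamental cube $F_n$ (\autoref{PropLurie:TauPushout}), and extending from $\tau_n$ to $\tau_{n+1}^\circ$ is inessential (\autoref{PropLurie:InessentialExtension}). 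The one new ingredient the paper has to provide is exactly the bridge you leave open: the fundamental cube $(F_n)^{op}$ factors through the nerve of the $n$-dimensional associator cube via the augmentation functor $\aug{-}$ (\autoref{Prop:SimplerFundamental}). This is what lets one feed the cube $A_n$ of \autoref{Def:AInfinityDatum} directly into the pushout square, and it is also what makes the boundary condition land in the right place: the ordered disjoint unions of \autoref{Lem:AssociatorCubeBoundary} go under $\aug{-}$ to the decomposable simplices of $\tau_{n+1}^\circ$, whose images in $\CCC^{\otimes}$ are essentially uniquely determined by their factors, so the compatibility of \autoref{Def:AInfinityDatum} is exactly the commutativity needed to compare $(\partial\square^n)^{op}\to(\tau_{n+1}^\circ)^{op}\to\CCC^{\otimes}$ with $(\partial\square^n)^{op}\to(\square^n)^{op}\to\CCC^{\otimes}$.

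Two smaller points. First, your claim that the inert conditions are ``automatic since the cube data naturally lives over the inert skeleton via $\otimes$'' asserts the very thing that needs proof; in the paper it is absorbed into the decomposable-simplex argument from the proof of Lurie's Proposition 4.1.5.8. Second, the picture of extracting $m_{n+1}$ along a chosen path and ``packaging the full cube into a higher simplex of the nerve'' is not how the extension works: the cube is not a single simplex of $\tau_{n+1}$ but the generating cell in the pushout presentation of $\tau_{n+1}$ over $\tau_{n+1}^\circ$, and no choice of path or of $n$-ary operations is required. Your suggested ``common fibration indexed by surjections of ordered sets'' could plausibly be made to work, but as stated it is a proposal for a construction rather than a construction; the augmentation functor of \autoref{Def:Augmentation} together with \autoref{Prop:SimplerFundamental} is the concrete device that plays this role.
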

For the proof of \autoref{Prop:AinfinityDatumLurie} the following description of the nerves of  $n$-dimensional associator cubes will be useful

\begin{Remark}
$\square^n$ can also be realized as the nerve of the power set poset of $\{1,2,\ldots,n\}$. 

We have a natural isomorphism $\square^n\to(\square^n)^{op}$ given by $J\mapsto \{1,\ldots,n\}\setminus J$. As a result we may equivalently describe an $n$-cube by giving a map from $(\square^n)^{op}$.
\end{Remark}
\begin{Construction}
\label{Cons:AssociatorCubeNerve}
The nerve of the $n$-dimensional associator cube is given the map $A_n:(\square^n)^{op}\to N(\AugOrdSet)$ defined as follows: 
\begin{itemize}
    \item $A_n$ sends $J\subseteq\{1,\ldots,n\}$ to $\ord{|J|+1}$. 
    \item Given an inclusion $I\subset I\cup \{j\}$, $A_n$ sends it to the map $\ord{|I|+2}\to\ord{|I|+1}$ given by \[
    k \mapsto \left\{\nnstik{1}{
    k \& k<j\\
    k-1 \&  k\geq j
    }\right.
    \]
\end{itemize}
\end{Construction}

\subsubsection{Proof of \autoref{Prop:AinfinityDatumLurie}}
Let us recall some details and terminology from \cite[\S 4.1]{LurieAlgebra}.

A monoidal $(\infty,1)$-category $\CCC$ uniquely defines a \emph{planar $\infty$-operad} $\CCC^{\otimes}$, i.e. a fibration $\CCC^{\otimes}\rightarrow N(\OrdSet^{op})$ with certain properties defined in loc.sit.

\begin{Definition}
Let $\OrdSet_s$ be the subcategory of $\OrdSet$ spanned by all monomorphisms.
\end{Definition}

The imbedding $\OrdSet_s\hookrightarrow\OrdSet$ endows $N(\OrdSet_s^{op})$ with the structure of a planar $\infty$- operad.

\begin{Definition}
A non-unital $\Ainfinity$-algebra in a monoidal category $\CCC$ is a morphism of planar $\infty$-operads $N(\OrdSet_s^{op})\to\CCC^{\otimes}$. 
\end{Definition}
Denote the category of all non-unital $\Ainfinity$ algebras in $\CCC$ by $\AlgOper_{\Ainfinity}(\CCC)$.

The above structure can be constructed as a limit in the following way:

\begin{Definition}
(\cite[\S 4.1.4.1]{LurieAlgebra})
Let $\tau_n$ denote the subcategory of $N(\OrdSet_s)$ spanned by the objects $\ordop{1},\ldots,\ordop{n}$.
\end{Definition}

\begin{Definition}(\cite[\S 4.1.4.2]{LurieAlgebra})
Let $n>0$. A non-unital $\mathbb{A}_n$-algebra object of a monoidal category $\CCC$ is a functor $\tau_n^{op} \rightarrow \CCC^\otimes$ with the following properties:
\begin{itemize}
    \item The diagram 
    \[
    \stik{1}{
     \tau_n^{op}\ar{d}\ar{r} \& \CCC^{\otimes} \ar{dl} \\
    N(\Delta^{op}) 
    }
    \]
    commutes.
    \item The map $\tau^{op}_n\to\CCC^\otimes$ is a map of planar operads. That is, it preserves coCartesian lifts over inert maps in $\OrdSet^{op}$.
\end{itemize}
\end{Definition}
Denote the category of non-unital $\mathbb{A}_n$ algebras in $\CCC$ by $\AlgOper_{\mathbb{A}_n}(\CCC)$.

The inclusion maps $\tau_1^{op}\hookrightarrow \tau^{op}_2\hookrightarrow \tau_n^{op}\hookrightarrow\ldots\hookrightarrow N(\OrdSet_s^{op})$ induce a map\[
\AlgOper_{\Ainfinity}(\CCC)\to\lim_n{ \AlgOper_{\mathbb{A}_n}(\CCC)}
\]
which by \cite[Proposition 4.1.4.9]{LurieAlgebra} is an equivalence. This means that to construct an $\Ainfinity$-algebra we need to construct a compatible system of $\mathbb{A}_n$-algebras. To prove \autoref{Prop:AinfinityDatumLurie} we will explain how the associator cubes allow us to explicitly carry out such a construction.

The connection between $\tau_n$ and cubes is given by the following diagram in $N(\OrdSet)$:

\begin{Definition}
(\cite[Construction 4.1.5.2]{LurieAlgebra}) The \emph{fundamental $n$-cube} $F_n$ is the map $\square^n\to N(\OrdSet)$, defined as follows: 

\begin{itemize}
    \item $F_n$ sends $J\subseteq\{1,\ldots,n\}$ to $\ordop{|J|+1}$.
    \item For an inclusion $I=\{i_1<i_2<\ldots<i_{a-1}\}\subseteq J=\{j_1<j_2<\ldots<j_{b-1}\}$ the map $F_n(I\subseteq J):\ordop{a}\to\ordop{b}$ is defined by
    \[
    m \mapsto \left\{\nnstik{1}{
    0 \& m=0\\
    m' \&  0<m<a, i_m=j_{m'}\\
    b \& m=a
    }\right.
    \]
\end{itemize}

\end{Definition}

From the construction, $F_n$ lands in $\tau_{n+1}$. 
Denote by $\gamma_n$ the induced map $\square^n\to\tau_{n+1}$.

If we denote by $\partial \square^n$ the boundary of $\square^n$ we can see that $\gamma_n$ sends it into the subsimplicial set $\tau_{n+1}^\circ$ which is the maximal subsimplicial set of $\tau_{n+1}$ not containing the edge $(\ordop{1}\to\ordop{n+1}:0\mapsto 0, 1\mapsto n+1)$ .

\begin{Proposition}\emph{(\cite[Proposition 4.1.5.7]{LurieAlgebra}).}
\label{PropLurie:TauPushout}

 The following square is a pushout:
 \[
  \stik{1}{
  \partial \square^n \ar{r} \ar{d}{\gamma_n} \& \square^n \ar{d}{\gamma_n} \\
  \tau_{n+1}^\circ \ar{r} \& \tau_{n+1}
  }
 \]
\end{Proposition}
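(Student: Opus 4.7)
The plan is to verify the square is a pushout directly at the level of simplicial sets by showing the canonical map
\[
\square^n \amalg_{\partial \square^n} \tau_{n+1}^\circ \longrightarrow \tau_{n+1}
\]
is an isomorphism. All four maps in the square will be monomorphisms, so it suffices to establish bijectivity on $k$-simplices for each $k$, reducing everything to combinatorial bookkeeping.

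First I would analyze which $k$-simplices of $\tau_{n+1}$ fail to lie in $\tau_{n+1}^\circ$. A $k$-simplex is a chain of monomorphisms $\sigma \colon \ordop{a_0} \hookrightarrow \cdots \hookrightarrow \ordop{a_k}$ with $1 \leq a_l \leq n+1$, and it lies outside $\tau_{n+1}^\circ$ precisely when some $1$-face equals the ``bad edge'' $e \colon \ordop{1} \hookrightarrow \ordop{n+1}$, $0 \mapsto 0,\ 1 \mapsto n+1$. Since the $a_l$ are non-decreasing and bounded by $n+1$, the presence of $e$ forces $a_0 = 1$ and $a_k = n+1$, and the full composite $\ordop{a_0} \to \ordop{a_k}$ must itself be equal to $e$.

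Next I would set up an explicit bijection between these ``bad'' simplices and the ``interior'' simplices of $\square^n$. A $k$-simplex of $\square^n$ is a chain $J_0 \subseteq \cdots \subseteq J_k$ of subsets of $\{1,\dots,n\}$, and it lies outside $\partial \square^n$ iff $J_0 = \emptyset$ and $J_k = \{1,\dots,n\}$. By the formula for $F_n$, such an interior simplex is sent by $\gamma_n$ to a chain whose composite equals $e$. Conversely, given a bad simplex $\sigma$, define the preimage by
\[
J_l := \bigl(\text{image of } \ordop{a_l} \text{ in } \ordop{n+1} \text{ under the composite to } \ordop{a_k}\bigr) \setminus \{0,n+1\}.
\]
The inclusions $J_{l-1} \subseteq J_l$ follow from factoring the maps through $\ordop{a_l}$, and the key verification is that $F_n(J_{l-1} \subseteq J_l)$ reproduces the original $\ordop{a_{l-1}} \hookrightarrow \ordop{a_l}$ of $\sigma$; this follows by comparing both order-preserving monomorphisms after postcomposition with the common injection into $\ordop{n+1}$, where they have the same image.

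Finally, for the boundary case, if $J_0 \neq \emptyset$ then no vertex of $\gamma_n(\sigma)$ is $\ordop{1}$, and if $J_k \neq \{1,\dots,n\}$ no vertex is $\ordop{n+1}$; in either case $e$ cannot appear as a $1$-face, so boundary simplices go into $\tau_{n+1}^\circ$. Putting everything together, each $k$-simplex of $\tau_{n+1}$ either lies in $\tau_{n+1}^\circ$ (with no preimage in $\square^n \setminus \partial \square^n$) or comes uniquely via $\gamma_n$ from an interior simplex of $\square^n$, and the images of these two classes intersect exactly in $\gamma_n(\partial \square^n)$. This is the pushout. The main obstacle is the verification in the middle step that the reconstructed chain recovers all \emph{intermediate} morphisms of $\sigma$, not merely its objects and endpoint composite; this is a somewhat delicate but routine order-theoretic unwinding of the definition of $F_n(I \subseteq J)$.
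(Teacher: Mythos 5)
The paper offers no proof of this statement—it is imported verbatim from Lurie's \emph{Higher Algebra}—so there is no internal argument to compare against; your direct combinatorial verification is correct and is in substance the same argument given in loc.\ cit. Since $\partial\square^n\hookrightarrow\square^n$ is a monomorphism, the pushout is computed degreewise as $(\tau_{n+1}^\circ)_k\sqcup\bigl(\square^n_k\setminus\partial\square^n_k\bigr)$, and the whole content is that $\gamma_n$ restricts to a bijection from the interior chains ($J_0=\emptyset$, $J_k=\{1,\ldots,n\}$) onto the simplices of $\tau_{n+1}$ admitting the edge $e$ as a face. Your identification of the latter class is right (the sizes $a_l$ are non-decreasing, so a face equal to $e$ forces $a_0=1$ and $a_k=n+1$, and the outer maps are then identities of $\ordop{1}$ and $\ordop{n+1}$, so the total composite is $e$); your inverse $J_l=\mathrm{im}\bigl(\ordop{a_l}\to\ordop{n+1}\bigr)\setminus\{0,n+1\}$ recovers the vertices, and recovering the intermediate maps by postcomposing with the injection $\ordop{a_l}\to\ordop{n+1}$ and comparing images is exactly the needed order-theoretic step, since an order-preserving injection is determined by its image. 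One side remark is wrong but harmless: not all four maps in the square are monomorphisms—$\gamma_n$ identifies, for example, the vertices $\{1\}$ and $\{2\}$ of $\square^2$, both landing on $\ordop{2}$—but your argument never uses injectivity of $\gamma_n$ on all of $\square^n$, only the degreewise description of the pushout (which needs only that the horizontal maps are monic) together with the bijectivity of $\gamma_n$ between interior and ``bad'' simplices, which you do establish.
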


Since what we are interested in are maps from $N(\OrdSet^{op})$ we note that the square of opposite categories is a pushout as well.
 
\begin{Proposition}\emph{(\cite[Proposition 4.1.5.8]{LurieAlgebra}).}

\label{PropLurie:InessentialExtension}
For any monoidal category $\CCC$ the inclusion $(\tau_n)^{op}\hookrightarrow(\tau_{n+1}^\circ)^{op}$ induces an equivalence of the respective categories of maps of planar $\infty$-operads to $\CCC$.
\end{Proposition}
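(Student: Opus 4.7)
The plan is to exhibit restriction along $(\tau_n)^{op}\hookrightarrow(\tau_{n+1}^\circ)^{op}$ as a trivial fibration of the $\infty$-categories of planar operad maps into $\CCC^\otimes$, which then gives the claimed equivalence. I would work throughout in the opposite category, where a planar operad map sends inert arrows (those dual to convex monomorphisms in $\OrdSet_s$) to coCartesian lifts over $N(\OrdSet^{op})$.

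First I would identify exactly what is added in passing from $\tau_n$ to $\tau_{n+1}^\circ$: the new vertex $\ordop{n+1}$, the $n+1$ convex-inclusion (inert) edges $\ordop{1}\hookrightarrow\ordop{n+1}$, the larger convex inclusions $\ordop{k}\hookrightarrow\ordop{n+1}$, the active (endpoint-preserving) monomorphisms $\ordop{k}\hookrightarrow\ordop{n+1}$ for $k\geq 2$, and all higher simplices built from these. Crucially, the unique active monomorphism $\ordop{1}\hookrightarrow\ordop{n+1}$ sending $0\mapsto 0$ and $1\mapsto n+1$ - which would encode the genuinely new arity-$(n+1)$ multiplication - is \emph{excluded}.

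Second I would establish essential surjectivity. Given a planar operad map $F:(\tau_n)^{op}\to\CCC^\otimes$, the $n+1$ inert arrows from $\ordop{n+1}$ to $\ordop{1}$ in $(\tau_{n+1}^\circ)^{op}$ force $F(\ordop{n+1})$ to be the essentially unique object over $\ordop{n+1}$ equivalent, under coCartesian transport along these inerts, to the tuple $(F(\ordop{1}),\ldots,F(\ordop{1}))$. Images of inert edges are then determined by choosing coCartesian lifts, while the active arrows $\ordop{n+1}\to\ordop{k}$ for $k\geq 2$ correspond to partial multiplications that decompose, via the planar operad structure, into tensor products of active operations of arity at most $n$, each already carried by $F|_{(\tau_n)^{op}}$. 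Higher simplices are then determined by the same decomposition applied inductively on dimension.

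Third I would upgrade this to full faithfulness by running the analogous argument on mapping spaces, which amounts to checking that the inclusion $(\tau_n)^{op}\hookrightarrow(\tau_{n+1}^\circ)^{op}$ has the requisite right lifting property against the coCartesian fibration $\CCC^\otimes\to N(\OrdSet^{op})$, skeleton by skeleton. The main obstacle is the inductive bookkeeping for the higher simplices of $(\tau_{n+1}^\circ)^{op}\setminus(\tau_n)^{op}$: one must verify that every attaching cell corresponds to data that is either free (an inert coCartesian lift) or determined (a composite of lower-arity active operations through inerts), with no cell requiring the single excluded piece of operadic data. A careful skeletal filtration is where the proof does its real work, and it is precisely the arity-$(n+1)$ multiplication, represented by the removed long edge, that would otherwise prevent the extension from being canonical.
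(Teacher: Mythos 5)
A point of reference first: the paper does not prove this statement at all --- it is quoted from Lurie's \emph{Higher Algebra} (Proposition 4.1.5.8), and the only place the paper engages with its proof is the later remark invoking the ``decomposable simplices'' that appear there. So the comparison is really between your sketch and Lurie's argument, and in outline your sketch reproduces that argument faithfully: the inventory of what is added in passing from $\tau_n$ to $\tau_{n+1}^\circ$, the observation that the excluded long edge $\ordop{1}\to\ordop{n+1}$ is the only genuinely new operadic datum, the determination of $F(\ordop{n+1})$ up to contractible choice by inert (Segal) transport, and the decomposition of the surviving active monomorphisms $\ordop{k}\hookrightarrow\ordop{n+1}$, $k\geq 2$, into blocks of arity at most $n$ already carried by $F|_{(\tau_n)^{op}}$.

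As a proof, however, the proposal has a genuine gap, and you flag it yourself: ``a careful skeletal filtration is where the proof does its real work.'' That filtration is not deferrable bookkeeping --- it is the entire content of the statement. Concretely, one must order the nondegenerate simplices of $(\tau_{n+1}^\circ)^{op}$ not lying in $(\tau_n)^{op}$ and show that each attaching step is of one of two kinds: either the simplex is \emph{decomposable}, in which case the planar-operad condition forces its image in an essentially unique way from lower strata, or its attaching map is inner anodyne (resp.\ admits the relevant lifting against the coCartesian fibration $\CCC^{\otimes}\to N(\OrdSet^{op})$), in which case the space of extensions is contractible for formal reasons. Verifying that \emph{every} new nondegenerate simplex falls into one of these two classes, and that the filtration can be arranged so the data each cell depends on is already attached, is exactly the step your proposal does not carry out; without it neither essential surjectivity nor full faithfulness is established. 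A smaller imprecision: your inventory of new edges omits the monomorphisms $\ordop{1}\hookrightarrow\ordop{n+1}$ that are neither inert nor active (e.g.\ the one selecting $\{0,2\}$ for $n\geq 2$); these are harmless because they factor as an active map of arity at most $n$ followed by an inert one, but that factorization has to be invoked explicitly. In short: the roadmap matches the cited source, but what is written is an outline of Lurie's proof rather than a proof.
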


These two propositions imply that we can build up an non-unital $\Ainfinity$ algebra by giving a compatible system of cubes in $\CCC$. Namely, having defined our functor on $(\tau_n)^{op}$, we can extend it to $(\tau_{n+1}^\circ)^{op}$ via \autoref{PropLurie:InessentialExtension}.

We then have a square
\[
\stik{1}{
  (\partial \square^n)^{op} \ar{r} \ar{d} \& (\square^n)^{op} \ar{d} \\
  (\tau_{n+1}^\circ)^{op} \ar{r} \& \CCC
  }
\]
and this square needs to commute in order to induce a map $(\tau_{n+1})^{op}\to\CCC$ using \autoref{PropLurie:TauPushout}.

We claim that an $\AinfinityNU$ object $A_\bullet$ from \autoref{Def:AInfinityDatum} satisfies this compatibility requirement.
The main observation is that the fundamental cube factors through the nerve of the associator cube from \autoref{Cons:AssociatorCubeNerve} using the following: 

\begin{Definition}
\label{Def:Augmentation}
Let $\aug{-}:\AugOrdSet\to\sset^{op}$
be the contravariant functor defined by the formula
\[
X\mapsto\Hom_{\AugOrdSet}(X,0\rightarrow 1)
\]
i.e. $\aug{X}$ is the simplicial set represented by the ordered set $\Hom_{\AugOrdSet}(X,0\rightarrow 1)$.
We note that on the level of objects it sends  $\ord{n}\in\AugOrdSet$ to the $n$-simplex $\ordop{n}\in\sset^{op}$.
\end{Definition}

We call $\aug{-}$ the \emph{augmentation} functor. It will also play a central role in our construction of the Hall algebra data in \autoref{Hcomb}.

The following is easily checked:

\begin{Proposition}
\label{Prop:SimplerFundamental}
The following diagram commutes:
\[
\stik{1}{
(\square^n)^{op} \ar{r}{(F_n)^{op}} \ar{d}[left]{A_n} \& N(\OrdSet^{op}) \\
N(\AugOrdSet) \ar{ur}[below,xshift=2em]{N(\aug{-})}
}
\]
\end{Proposition}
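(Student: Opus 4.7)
The plan is to reduce to a direct combinatorial verification on objects and on atomic covers. Since $\square^n$ is the nerve of a poset, a functor out of $(\square^n)^{op}$ is determined by its values on objects together with its values on the atomic morphisms $I \cup \{j\} \to I$ (with $j \notin I$). It therefore suffices to show that $\aug{-} \circ A_n$ and $(F_n)^{op}$ agree on this data.

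On objects, both sides send $J \subseteq \{1,\ldots,n\}$ to $\ordop{|J|+1}$. For $\aug{-} \circ A_n$ this follows from the definition of $\aug{-}$: a monotone map $\ord{|J|+1}\to\ord{2}$ is determined by its unique jump point, giving a canonical identification $\aug{\ord{|J|+1}} \cong \ordop{|J|+1}$. For $(F_n)^{op}$ this is immediate from the defining formula of $F_n$.

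On atomic morphisms, fix $I \subset J = I \cup \{j\}$ and let $p$ be the position of $j$ in the sorted enumeration of $J$. By \autoref{Cons:AssociatorCubeNerve}, $A_n$ sends the corresponding morphism of $(\square^n)^{op}$ to a degeneracy $\sigma \colon \ord{|I|+2} \to \ord{|I|+1}$ whose collapsed pair is determined by $p$. Applying $\aug{-}$ amounts to precomposing jump-point maps with $\sigma$, and a short computation shows the result is the coface map $\ordop{|I|+1} \to \ordop{|I|+2}$ which skips $p$. Comparing with the defining formula of $F_n$, this is exactly the morphism assigned by $(F_n)^{op}$.

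The main technical care needed is in keeping track of conventions: the identification $\square^n \cong (\square^n)^{op}$ via complementation, the contravariance of $\aug{-}$ which turns degeneracies into cofaces, and the correct reading of the construction of $A_n$ where the parameter governing the degeneracy is the position of the added element in the ordered list $I \cup \{j\}$, not the numerical value of $j$ itself. Once these are in hand, the atomic-morphism check becomes the single combinatorial identity "augmentation of a degeneracy is a coface" under the jump-point bijection, which is a routine verification.
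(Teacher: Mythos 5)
Your verification is correct and is precisely the direct check the paper leaves to the reader (the proposition is stated with only the remark that it is ``easily checked''). In particular you correctly isolate the one genuine subtlety, namely that the parameter in the formula for $A_n$ on an atomic inclusion must be read as the position of the added element in $I\cup\{j\}$, after which the computation that the augmentation of the resulting surjection is exactly the coface prescribed by $F_n$ goes through as you describe.
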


Recall from \autoref{Lem:AssociatorCubeBoundary} that the boundary of the associator cube is built out of ordered disjoint unions of simpler pieces, where by "simpler pieces" we mean degeneracies of lower dimensional associator cubes.

It is straightforward to verify that the image of an ordered disjoint union of cubes under $\aug{-}$ is made up of what's called \emph{decomposable simplices} in $\tau^\circ_n$ in the proof of \cite[Proposition 4.1.5.8]{LurieAlgebra}. From this same proof we see that the image of a decomposable simplex is determined in an essentially unique way by the pieces it is composed of. Therefore the condition of \autoref{Def:AInfinityDatum} implies the compatibility required to extend from $(\tau_n)^{op}$ to $\tau_{n+1}^{op}$. This finishes the proof.

\begin{Remark}
The advantage of packaging of the associativity data via the associator cubes is that it gives a concrete system of compatibilities within the data, which is useful for comparing them with the higher Segal conditions below.
\end{Remark}

\subsection{Lax non-unital \texorpdfstring{$\Ainfinity$}{A infinity}-algebras}
\label{sec:LaxAInfinity}
Here we propose a generalization of non-unital $\Ainfinity$-algebras to the situation where not all of the data provided has invertible higher morphisms. The idea is to generalize \autoref{Def:AInfinityDatum}.  \autoref{Def:LaxAinfinityAlgebra} requires some technical background.
The main thing we need is a weaker notion of $n$-cube than we used in \autoref{sec:Ainfinity} where not all higher morphisms are invertible.

In \cite{streetParity}, and in more detail in \cite{aitchisonCubes}, there are constructed $n$-cubes as \emph{parity complexes}. Parity complexes are a certain precise way to give generating data for a higher category introduced in \cite{streetParity}. Let us denote these cubes by $\cubeInfinity{n}$.
The main difference from \autoref{Def:AbstractNCube} is that they specify a system of \emph{directed} higher morphisms where the $k$-morphisms appear either as sources or targets for $k+1$ morphisms, but not both as in the $(\infty,1)$-case.

\subsubsection{Lax cubes}
\label{sec:LaxCubes}
The cells of $\cubeInfinity{n}$ correspond to sequences of length $n$ in the symbols $\{-,0,+\}$. The number of $0$'s determines the dimension of the cell, and the $+$'s and $-$'s determine which cells are in its source and which cells in its target and which cells it can compose with.

The basic examples are the interval and the square
\[
\stik{1}{
-\ar{rr}{0}\& {} \& +
}
\]
\[
\stik{1}{
{} \& -- \ar{dr}[above]{0-} \ar{dl}[above]{-0} \& {}\\
-+ \ar{dr}[below]{0+} \ar[Rightarrow,shorten <=2em,shorten >=2em]{rr}{00} \& {} \& +- \ar{dl}[below]{+0}\\
{} \& ++ \& {}
}
\]

Continuing inductively, an $n$-cube has two collections of $n-1$ faces which compose into the source and target of the $n$-morphism which is the $0\ldots 0$ face. The source collection is comprised of the faces $(-0\ldots 0),(0+0\ldots 0),(00-0\ldots 0),\ldots$ and the target is comprised of the faces $(+0\ldots 0),(0-0\ldots 0),\ldots$ (composed in reverse order).

To actually compose these collections together it is necessary for the successive faces in the list to have the same source and target, which requires "whiskering" with lower dimensional faces. For the complete details of this procedure we refer to \cite[\S 4]{aitchisonCubes}.

\begin{Proposition}
An $n$-cube as describes above prescribes a strict $n$-category.
\end{Proposition}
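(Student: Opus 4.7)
The plan is to reduce the claim to the general theorem from the theory of parity complexes, as developed by Street in \cite{streetParity} and elaborated by Aitchison in \cite{aitchisonCubes}: every parity complex freely generates a strict $\omega$-category. Since $\cubeInfinity{n}$ has cells only in dimensions $0,1,\ldots,n$, the resulting strict $\omega$-category is in fact a strict $n$-category, giving exactly what is claimed.

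The first step is to make the combinatorial data of $\cubeInfinity{n}$ precise as a parity complex. Recall the cells are sequences $s\in\{-,0,+\}^n$ with dimension $|s|$ equal to the number of $0$ entries. For each cell $s$ of dimension $k>0$, the negative face set $s^-$ consists of those $(k-1)$-cells obtained by replacing a single $0$ in $s$ with a sign according to the parity rule (the $i$-th $0$ is replaced by $-$ if $i$ is odd and $+$ if $i$ is even), and dually for $s^+$. I would then verify the parity-complex axioms of \cite{streetParity}: globularity ($s^{--}\cup s^{++}=s^{-+}\cup s^{+-}$ with disjointness on each side), well-formedness, segmentation, and the acyclicity condition on the relation generated by the faces. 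These are classical computations for the cubical parity complex and are carried out in detail in \cite[\S 3]{aitchisonCubes}; I would simply check that the indexing and sign conventions used here match Aitchison's so that his verification applies.

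Next I would invoke Street's main theorem: for any parity complex $P$, the free strict $\omega$-category $\mathcal{O}(P)$ generated by $P$ exists, and its cells are precisely the well-formed subcomplexes of $P$. Applied to $P=\cubeInfinity{n}$ this produces a strict $\omega$-category whose $k$-morphisms are the well-formed $k$-dimensional subcubes (together with their whiskerings). The source and target of the top $n$-cell $0\ldots 0$ coincide with the two iterated compositions of its $(n-1)$-dimensional faces described in the text, with the whiskering by lower-dimensional faces being exactly the one needed to make successive pieces composable. Since every cell of $\cubeInfinity{n}$ has dimension at most $n$, all $k$-morphisms for $k>n$ in $\mathcal{O}(\cubeInfinity{n})$ are identities, so $\mathcal{O}(\cubeInfinity{n})$ is a strict $n$-category.

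The only real obstacle is bookkeeping: translating the sequence-based parity data used here into the combinatorial form required by Street and keeping track of the alternating sign pattern that controls which face is a source and which is a target. I expect no conceptual difficulty beyond this, since the cubical parity complex is the paradigmatic example treated in \cite{aitchisonCubes}; one only has to confirm that the whiskering recipe sketched in the excerpt agrees with the one produced by the free construction $\mathcal{O}(-)$, which follows from the uniqueness of the source/target decomposition for well-formed subcomplexes.
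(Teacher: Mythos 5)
Your proposal is correct and follows essentially the same route as the paper: both invoke Street's theorem that a parity complex freely generates a strict $\omega$-category and then observe that, since $\cubeInfinity{n}$ has no nontrivial cells above dimension $n$, the resulting $\omega$-category is a strict $n$-category. The paper simply takes the parity-complex structure of the cube as given from the cited references, whereas you additionally sketch verifying the axioms — a harmless elaboration of the same argument.
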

\begin{proof}
As is proven in \cite{streetParity}, any parity complex (such as the $n$-cube in the way described above) gives rise to a free $\omega$-category. Since the $n$-cube has no non-trivial faces of dimension higher than $n$, the $\omega$-category corresponding to it is in fact a strict $n$-category. 
\end{proof}

Let $\CCC$ be an $(\infty,d)$-category.
By a lax $n$-cube in $\CCC$ we mean a morphism $N(\cubeInfinity{n})\to\CCC$ (where the nerve is taken using the above proposition).

\begin{Definition}
We say that a $k$-cube in $\CCC$ is invertible if the morphism from the source $k-1$ morphism to the target $k-1$ morphism is invertible.
\end{Definition}

The main example in this paper is when $\CCC$ is the $(\infty,n)$-category of correspondences and then an $k$-cube is a $k$-cube of correspondences as described in \autoref{sec:Corr}.
A non-lax (i.e. invertible) $k$-cube in this case would have additional restrictions in the form of certain subcubes in the diagram being pullback cubes.

\subsubsection{Definition of a Lax \texorpdfstring{$\Ainfinity$}{A infinity}-algebra}
\begin{Definition}
\label{Def:LaxAinfinityAlgebra}
Let $\CCC$ be a monoidal $(\infty,n)$ category and fix $d\leq n$.
A $d$-lax $\Ainfinity$ object in $\CCC$ is a system of lax cubes $\mathcal{A}_k$ in $\CCC$ which are compatible as in \autoref{Def:AInfinityDatum} and such that $\mathcal{A}_k$ is invertible for $k>d$.
\end{Definition}

 We will see in \autoref{sec:HigherSegal} that $d$-Segal objects in a category $\CCC$ give rise to examples of $d$-lax $\Ainfinity$ objects in the $(\infty,d)$ category of correspondences in $\CCC$.
 
 \begin{Remark}
 This notion is related to the notion of "skew monoidal category" studied in \cite{slaSkewMonoidal} and \cite{StreetSkewMonoidal}. The main difference is that skew monoidal categories have the additional data and requirements relating to a lax unital structure. One could attempt to add it by adding to the associator cubes also cubes which contain general (non-surjective) maps.
 Two issues then arise: First, it is not immediately clear whether this is equivalent to some finite amount of data in each dimension. Second, in the definition of skew monoidal category the left unit is in the opposite direction to the right unit. This seems to be unnatural in our setting where the units would correspond to the two maps $\ord{1}\hookrightarrow\ord{2}$.
 \end{Remark}

\section{Correspondences}
\label{sec:Corr}
Let $\Spaces$ be an $(\infty,1)$-category with finite limits.
For the constructions in this article we will describe the $(\infty,1)$-category of \emph{correspondences} or \emph{spans} in  $\Spaces$,  $\Corr(\Spaces)$ and discuss its generalizations which allow non-invertible higher morphisms. 

\subsection{Cubes of correspondences}
\label{sec:CorrCubes}
\begin{Definition}
The abstract $n$-cube of corrsepondences is the poset $\CorrCube{n}$ of faces of an $n$-cube. i.e. there is a map from a face $K$ to a face $L$ in $\CorrCube{n}$ when $L$ is a subface of $K$.
\end{Definition}

The notation of \autoref{sec:LaxCubes} can be thought of as indexing the faces of an $n$-cube, so we can use this to denote the objects of this poset by sequences $\epsilon=(\epsilon_1,\ldots,\epsilon_n)$ of $-,0,+$'s. 
There is a map $\epsilon\to\delta$ in $\CorrCube{n}$ exactly when $\epsilon_i=+$ (resp $-$) implies $\delta_i=+$ (resp $-$) for all $i$.

\begin{Example}
The abstract $1$-cube of correspondences is the poset\[
-\leftarrow 0 \rightarrow +
\]
\end{Example}

\begin{Example}
The abstract $2$-cube correspondences is the poset
\begin{equation}
\label{CorrSquare}
\stik{1}{
-- \& 0- \ar{l} \ar{r} \& +- \\
-0 \ar{u} \ar{d}\& 00 \ar{u} \ar{d}\ar{l} \ar{r} \& +0 \ar{u} \ar{d} \\
-+ \& 0+ \ar{l} \ar{r} \& ++ \\
}
\end{equation}
\end{Example}

\begin{Definition}
\label{def:CorrCube}
An $n$-cube of correspondences in an $(\infty,1)$–category $\Spaces$ is a functor $N(\CorrCube{n})\to\Spaces$
\end{Definition}

\begin{Notation}
\label{not:corrsubcube}
Let $C:N(\CorrCube{n})\to\Spaces$. Let $\epsilon\to\delta$ be an arrow in $\CorrCube{n}$, then we denote by $C^\epsilon_\delta$ the $k$-cube in $\Spaces$ which is the image under $C$ of the cube of paths from $\epsilon$ to $\delta$. The dimension $k$ is exactly the difference in dimension between $\epsilon$ and $\delta$ when considered as faces of the abstract $n$-cube.
\end{Notation}

\begin{Example}
Let $C$ be a square of correspondences in $\Spaces$, i.e. a diagram
\[
\stik{1}{
C(--) \& C(0-) \ar{l} \ar{r} \& C(+-) \\
C(-0) \ar{u} \ar{d}\& C(00) \ar{u} \ar{d}\ar{l} \ar{r} \& C(+0) \ar{u} \ar{d} \\
C(-+) \& C(0+) \ar{l} \ar{r} \& C(++) \\
}
\]
then e.g. $C^{00}_{+-}$ is the square
\[
\stik{1}{
C(0-)\ar{r} \& C(+-) \\
C(00) \ar{u} \ar{r} \& C(+0) \ar{u} 
}
\]
\end{Example}

\subsection{Grids of cubes of correspondences}
\label{sec:corrCubeGrids}
Extending the notation of \autoref{sec:LaxCubes}, we can define 
\begin{Definition}
The abstract $n_1\times\cdots\times n_k$ grid of $k$–cubes $\cubeInfinity{[n_1,\ldots,n_k]}$ is the srtict infinity category (in the sense of \cite{streetParity}) that has as faces sequences of length $k$ in the symbols $\cInt{m}{l},l\geq 0,m\geq 0$ such that in place $i$ there is a $\cInt{m}{l}$ symbol with $0\leq l+m\leq n_i$.
\end{Definition}
Intuitively, the symbol $\cInt{m}{l}$ means "an interval of length $l$ starting at $m$".

Note that $\cubeInfinity{[n_1=1,\ldots,n_k=1]}\cong\cubeInfinity{k}$ with the replacements $$"\cInt{0}{0}"\mapsto "-","\cInt{1}{0}"\mapsto "+","\cInt{0}{1}"\mapsto "0"$$.

\begin{Example}
$\cubeInfinity{[2]}$ is \[
\stik{1}{
\cInt{0}{0} \ar{r}[above]{\cInt{0}{1}} \ar[bend left]{rr}[above]{\cInt{0}{2}}\& \cInt{1}{0} \ar{r}[above]{\cInt{1}{1}}\& \cInt{2}{0}
}
\]
\end{Example}
\begin{Example}
The 0 and 1 faces of $\cubeInfinity{[2,1]}$ assemble into the diagram
\[
\stik{1}{
\cInt{0}{0}\cInt{0}{0}\ar{d}[left]{\cInt{0}{0}\cInt{0}{1}} \ar{r}[above]{\cInt{0}{1}\cInt{0}{0}} \ar[bend left]{rr}[above]{\cInt{0}{2}\cInt{0}{0}}\& \cInt{1}{0}\cInt{0}{0} \ar{d}[left]{\cInt{1}{0}\cInt{0}{1}} \ar{r}[above]{\cInt{1}{1}\cInt{0}{0}}\& \cInt{2}{0}\cInt{0}{0} \ar{d}[right]{\cInt{2}{0}\cInt{0}{1}} \\
\cInt{0}{0}\cInt{1}{0} \ar{r}[below]{\cInt{0}{1}\cInt{1}{0}} \ar[bend right]{rr}[below]{\cInt{0}{2}\cInt{1}{0}}\& \cInt{1}{0}\cInt{1}{0} \ar{r}[below]{\cInt{1}{1}\cInt{1}{0}} \& \cInt{2}{0}\cInt{1}{0} 
}
\]
\end{Example}

Proceeding in the same way as \autoref{sec:CorrCubes} we define:
\begin{Definition}
The abstract $n_1\times\cdots\times n_k$ grid of cubes of correspondences $\CorrCube{[n_1,\ldots,n_k]}$ is the poset of faces of $\cubeInfinity{[n_1,\ldots,n_k]}$.
\end{Definition}

\begin{Example}
\label{Ex:CorrCube2}
$\CorrCube{[2]}$ is the poset 
\[
\stik{1}{
{} \&{} \& \cInt{0}{2} \ar{dr} \ar{dl} \ar[dd, phantom, "\lrcorner"{anchor=center, rotate=135,scale=2}, very near start]\\
{} \& \cInt{0}{1} \ar{dr} \ar{dl} \& {} \& \cInt{1}{1} \ar{dr} \ar{dl} \\
\cInt{0}{0} \& {} \& \cInt{1}{0} \& {} \& \cInt{2}{0}
}
\]
\end{Example}

\subsection{The \texorpdfstring{$(\infty,N)$}{(infinity,N)}-category of correspondences}
\label{secNUppleCorr}
Cubes of correspondences constructed above can be used to define $(\infty,N)$-categories of correspondences. For $d=1$ the definition appears in several places in the literature. For $d>1$ such categories were defined by Haugseng in \cite{HaugsengSpans}. These definitions use the realization of $(\infty,1)$-categories as complete Segal spaces and a generalization of these for $N>1$ due to Barwick \cite{BarwickSegal}. We recall these concepts in \autoref{sec:uppleSegal}. In the present section we will recall the construction of the $(\infty,N)$-category of correspondences $\Corr_N$ using the formalism of cubes of correspondences defined above, i.e. in a slightly different language from \cite{HaugsengSpans}. 

An important feature of this construction is that if we want to consider the data of non-invertible higher morphisms in the category of correspondences, it naturally organizes into a \emph{$N$-upple Segal space} (i.e. a kind of a higher dimensional analog of a double category - see \autoref{sec:uppleSegal}). The advantage of the $N$-upple language is that it allows for more general cells to appear, i.e. general grids of cubes vs only those where in all but one direction we have equivalences. However the theory of these objects is not yet well developed in the literature. Therefore we state \autoref{thm:HigherSegalLax} in the framework of $n$-fold Segal Spaces. To do this we need to pass from $N$-upple to $(\infty,N)$-categories. The main fact we need is that there is a canonical functor $\HaugsengU$ defined in \cite{HaugsengSpans} which assigns an $(\infty,N)$-category to an $N$-upple Segal space.

Let $\Spaces$ be a category. In \cite{HaugsengSpans} there is constructed an $N$-uple Segal space $\CORR_N(\Spaces)$ (called $\SPAN_N$ in loc. cit.). For the sake of consistency we reproduce his definition below in the language of grids of cubes:
\begin{Definition}
Let $\CorrCubeSegal{[n_1,\ldots,n_N]}$ be the family of subposets of $\CorrCube{[n_1,\ldots,n_N]}$ the elements of which are identical except at one place in the sequence. i.e. those of the form in \autoref{Ex:CorrCube2}.
\end{Definition}

\begin{Definition}
\label{def:NuppleCorr}
$\CORR_N(\Spaces)$ is the $N$-upple Segal space with $(n_1,\ldots,n_N)$ space equal to $\MapsSegal(N(\CorrCube{[n_1,\ldots,n_N]}),\Spaces)$, where $\MapsSegal$ is the subspace of $\Maps$ consisting of maps that preserve pullbacks when restricted to any member of $\CorrCubeSegal{[n_1,\ldots,n_N]}$.
\end{Definition}

\begin{Example} The \texorpdfstring{$N=1$}{N=1} case:
\label{sec:Corr1}
The (0) space is given by mapping from $\CorrCube{0}$ which is trivial, so this is just the space of objects of $\Spaces$.

The (1) space is given by mapping from $\CorrCube{1}$ and hence is the space of correspondences
\[
\stik{1}{
{} \& X_{AB} \ar{dl} \ar{dr} \& {} \\
A \& {} \& B
}
\]

The (2) space, which gives composition, is given by mapping from $\CorrCube{2}$ while respecting limits, and hence is the space of diagrams

\[
\stik{1}{
{} \&{} \& X_{ABC} \ar{dr} \ar{dl} \ar[dd, phantom, "\lrcorner"{anchor=center, rotate=135,scale=2}, very near start]\\
{} \& X_{AB} \ar{dr} \ar{dl} \& {} \& X_{BC} \ar{dr} \ar{dl} \\
A \& {} \& B \& {} \& C
}
\]
and so on. Note that the universality of pullbacks is what implies the Segal conditions here.

\end{Example}

\begin{Definition}
Denote $\Corr_N(\Spaces):=\HaugsengU(\CORR_N(\Spaces))$ to be the corresponding $N$-fold Segal space.
\end{Definition}

\begin{Remark}
It is shown in \cite{HaugsengSpans} that $\Corr_N(\Spaces)$ is complete, i.e. it is in $\CAT{n}$.
\end{Remark}

\subsection{Invertible cubes of correspondences}
Recall from \autoref{Seq:UnderlyingCategoryFunctor} that for an $(\infty,N)$ category $\CCC$ presented as an $N$-fold Segal space we have its underlying $(\infty,N-1)$ category $\CCC^{\CAT{(N-1)}}\hookrightarrow \CCC$ obtained by discarding non-invertible $N$-cells. Let us analyze this in the case of $\CCC=\Corr_N(\Spaces)$.

\subsubsection{The \texorpdfstring{$N=2$}{N=2} case}
For simplicity let us start from considering the case $N=2$. 

The $1,1$ cells of $\CORR_2(\Spaces)$ are diagrams of the form
\[
\stik{1}{
A \& E \ar{l} \ar{r} \& B \\
F \ar{u} \ar{d}\& Z \ar{u} \ar{d}\ar{l} \ar{r} \& G\ar{u} \ar{d} \\
C \& H \ar{l} \ar{r} \& D \\
}
\]
and so the $1,1$ cells of $\Corr_2(\Spaces)$ are diagrams of the form
\[
\stik{1}{
A \& E \ar{l} \ar{r} \& B \\
F \ar{u}[above,sloped]{\sim} \ar{d}[below,sloped]{\sim}\& Z \ar{u} \ar{d}\ar{l} \ar{r} \& G\ar{u}[below,sloped]{\sim} \ar{d}[above,sloped]{\sim} \\
C \& H \ar{l} \ar{r} \& D \\
}
\]
Such a diagram is invertible (as a map between the top and bottom rows) iff the maps $Z\to E,Z\to H$ are isomorphisms \emph{iff} the upper-right and lower-left squares are pullback squares. This leads to

\begin{Definition}
Say that a square of correspondences is \emph{invertible} if the upper-right and lower-left squares are pullback squares.
\end{Definition}

This leads us to the definition of the following subobject of $\CORR_2(\Spaces)$:
\begin{Definition}
Let $\CORR_{2,1}(\Spaces)$ be the subfunctor - from $\OrdSet^{\times 2}$ to spaces - of $\CORR_2(\Spaces)$ which has $2$-dimensional cells only those where all squares of correspondences involved are invertible.
\end{Definition}

\begin{Proposition}
\label{Prop:Corr21Segal}
$\CORR_{2,1}(\Spaces)$ is a $2$-uple Segal space.
\end{Proposition}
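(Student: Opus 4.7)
The plan is to verify two things: that $\CORR_{2,1}(\Spaces)$ is a well-defined sub-bisimplicial space of $\CORR_2(\Spaces)$, and that it inherits the Segal conditions in both simplicial directions. First I would clarify what the restriction means for higher cells: an $(n_1, n_2)$-cell of $\CORR_2(\Spaces)$ is a grid of correspondences whose restriction to any $(1,1)$-sub-block is a square of correspondences as in the definition. I would then declare that an $(n_1, n_2)$-cell belongs to $\CORR_{2,1}(\Spaces)$ precisely when each of the $n_1\cdot n_2$ unit $(1,1)$-sub-blocks is invertible, i.e. the upper-right and lower-left $2\times 2$ sub-squares inside each unit block are pullbacks.

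Next I would check that the simplicial structure of $\CORR_2(\Spaces)$ restricts to this sub-bisimplicial space. Face maps select a sub-grid of a larger grid, and the unit $(1,1)$-sub-blocks of the sub-grid form a subset of those of the ambient grid, so their upper-right and lower-left squares remain pullbacks. Degeneracy maps insert a row or column of identity arrows; any unit $(1,1)$-block containing identity edges has its upper-right and lower-left sub-squares trivially pullbacks (an identity arrow on one side of a commutative square always makes it a pullback). Hence both kinds of structure maps preserve the invertibility condition, so $\CORR_{2,1}(\Spaces)$ is indeed a well-defined sub-functor $\OrdSet^{\times 2}\to\Spaces$.

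For the Segal property, one has to show that the restricted Segal maps
\[
(X_{n_1, n_2})^{inv} \longrightarrow (X_{1, n_2})^{inv} \times_{X_{0, n_2}} \cdots \times_{X_{0, n_2}} (X_{1, n_2})^{inv}
\]
and the analogous map in the other direction are equivalences, given that the unrestricted versions are equivalences for $\CORR_2(\Spaces)$. The key observation is that the invertibility condition is \emph{local}: an $(n_1, n_2)$-cell is invertible iff each of its $n_1\cdot n_2$ unit sub-blocks is invertible, and under the Segal decomposition into $(1, n_2)$-slices each unit sub-block belongs to exactly one slice. Therefore invertibility of the whole cell is the conjunction of the invertibility conditions on the individual slices, and the Segal equivalence for $\CORR_2(\Spaces)$ restricts to an equivalence on invertible subspaces. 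The base space $X_{0, n_2}$ contains no unit $(1,1)$-sub-blocks, so the invertibility condition is vacuous and $(X_{0, n_2})^{inv} = X_{0, n_2}$, legitimising the fibre product above.

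The main obstacle I anticipate is combinatorial bookkeeping rather than a deep argument: one must carefully verify that the unit $(1,1)$-sub-blocks of an $(n_1, n_2)$-cell partition cleanly across Segal slices (so that no invertibility condition is counted twice or dropped at the shared boundaries), and that the pullback requirements already built into $\MapsSegal$ for adjacent unit blocks are compatible with, and do not strengthen, the per-unit-block invertibility. Once this is in place the equivalence of the restricted Segal maps follows formally from the equivalence of the unrestricted ones, completing the proof.
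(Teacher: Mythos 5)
There is a genuine gap, and it sits exactly where your argument is most confident. You assert that ``face maps select a sub-grid of a larger grid, and the unit $(1,1)$-sub-blocks of the sub-grid form a subset of those of the ambient grid.'' This is true only for \emph{outer} face maps. An \emph{inner} face map in either simplicial direction does not select a sub-grid: it composes two adjacent columns (or rows) of correspondences by taking the pullback in the middle, so the unit $(1,1)$-block of the resulting cell is the \emph{composite} of two unit blocks of the original cell, not one of them. Consequently, for $\CORR_{2,1}(\Spaces)$ to be closed under the bisimplicial structure maps --- equivalently, for the Segal maps of $\CORR_2(\Spaces)$ to restrict to equivalences on the invertible locus --- you must prove that the composite of two invertible squares of correspondences is again invertible. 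Your ``locality'' observation (each unit block lives in exactly one Segal slice) is correct as far as it goes, but it quietly presupposes that invertibility of all the unit blocks already forces invertibility of all the composite squares that the inner faces produce; that is precisely the nontrivial claim, and your proposal never addresses it. (The same issue appears in reconciling your definition of the sub-object with the paper's: ``all squares of correspondences involved'' includes the composites.)

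This composition statement is in fact the entire content of the paper's proof. One takes a horizontally adjacent pair of invertible squares, and must show that, say, the upper-right square of the composite is a pullback. The paper does this by a two-step transfer: first the pasting lemma for pullbacks (using that the right-hand unit block is invertible) reduces the claim to a square one step to the left, and then \autoref{Lem:pullbackcube} --- applied to a cube whose front and back faces are pullbacks by the $\MapsSegal$ condition in \autoref{def:NuppleCorr} --- reduces it to the invertibility of the left-hand unit block. Your remarks about degeneracies (identity edges force pullbacks) and about $(X_{0,n_2})^{inv}=X_{0,n_2}$ are fine, but without the composition argument the proof does not go through.
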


For the proof we need the following definition and lemma from \autoref{sec:pullbacklemma}

\begin{Definition}[\autoref{Def:PBCube}]
A commutative cube is said to be a \emph{pullback} cube if it presents the source vertex as the limit of the rest of the diagram. 
\end{Definition}

\begin{Lemma}[\autoref{Lem:pullbackcube}]
Consider a cube in an $\infty$-category
\[
\stik{1}{
 \& A \ar{rr} \ar{dd} \& {} \& B \ar{dd}\\
X \ar[crossing over]{rr} \ar{dd} \ar{ur} \& {} \& Y \ar{dd} \ar{ur} \& {}\\
{} \& C \ar{rr} \& {} \& D\\
Z \ar{rr} \ar{ur} \& {} \& W \ar{ur}
\latearrow{commutative diagrams/crossing over}{2-3}{4-3}{}
}
\]
And suppose that $\squCorns{A}{B}{C}{D}$ is a pullback square, then $\squCorns{X}{Y}{Z}{W}$ is a pullback square if and only if the whole cube is a pullback cube.
\end{Lemma}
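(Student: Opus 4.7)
The plan is to compute the limit of the "punctured" 3-cube (everything except the initial vertex $X$) as an iterated pullback, and then invoke the hypothesis on the back face to simplify the result.

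First, I would observe that since $A$ is the initial vertex of the back square $\squCorns{A}{B}{C}{D}$, this square viewed on its own as a $\square^2$-diagram has $A$ as its limit; this is the general fact that a diagram with an initial vertex has that vertex as its limit (easily checked by iterated limits, since $\lim_{\square^1}(X\to Y)=X$). Similarly, the limit of the punctured front square $Y\to W\leftarrow Z$ is by definition $Y\times_W Z$. The three depth edges $Y\to B$, $Z\to C$, $W\to D$ induce a canonical map $Y\times_W Z\to B\times_D C$, while the back face gives the canonical comparison map $A\to B\times_D C$. A direct mapping-space analysis identifies a cone on an object $T$ over the full punctured cube with a pair $(T\to Y\times_W Z,\ T\to A)$ which agree after projection to $B\times_D C$, yielding
\[
\lim_{\square^3\setminus\{X\}} F \;\simeq\; A\times_{B\times_D C}(Y\times_W Z).
\]

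Next, I would invoke the hypothesis: the back face being a pullback is precisely the statement that $A\to B\times_D C$ is an equivalence. Since pullback along an equivalence is an equivalence, this simplifies to
\[
\lim_{\square^3\setminus\{X\}} F \;\simeq\; Y\times_W Z,
\]
and under this identification the canonical map from $X$ to this limit agrees with the natural comparison $X\to Y\times_W Z$ induced by the front face. Therefore the cube is a pullback if and only if this latter map is an equivalence, which is exactly the statement that $\squCorns{X}{Y}{Z}{W}$ is a pullback square.

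I expect the main subtle point to be making the first step rigorous in the $\infty$-categorical setting, where cones carry higher coherence data. This should be handled either by a direct analysis of mapping spaces — using the universal properties of the two inner limits ($A$ and $Y\times_W Z$) to contract the coherence data down to an equalizer over $B\times_D C$ — or by realizing the indexing category $\square^3\setminus\{X\}$ as a pushout in $\mathrm{Cat}_\infty$ of the back square and the punctured front square, glued along the three depth edges, and then computing limits via the induced pullback of functor categories. Once the reformulation of the first step is in place, the rest of the argument reduces to the elementary fact that pulling back along an equivalence yields an equivalence.
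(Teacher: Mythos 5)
Your argument is correct, and it reaches the same pivot as the paper --- the hypothesis on the back face lets one identify cones over the punctured cube with cones over the punctured front square --- but it gets there by a genuinely different route. The paper never writes down the limit of the punctured cube: it works with the categories of cones $X_\pbcube$, $X_{\pbsquare-front}$, $X_{\pbsquare-back}$, exhibits $X_\pbcube$ as the pullback of $X_{\pbsquare-front}$ against the slice $(X_{\pbsquare-back})_{/f_{back}}$ over the final cone supplied by the back face, and concludes that the restriction $X_\pbcube\to X_{\pbsquare-front}$ is an equivalence because the source map from a slice over a final object is one. You instead compute $\lim_{\square^3\setminus\{X\}}F\simeq A\times_{B\times_D C}(Y\times_W Z)$ and base-change along the equivalence $A\xrightarrow{\ \sim\ }B\times_D C$; your fiber-product formula is exactly the decategorified shadow of the paper's pullback square of cone categories. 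Your approach buys concreteness (everything reduces to iterated pullbacks and the fact that base change along an equivalence is an equivalence); the paper's buys economy (the only input is finality, so no explicit limit formula needs to be justified). One small correction to your sketch of the gluing: the punctured cube is not the pushout of the back square and the punctured front square along the three depth edges. It is the union of the full back square on $\{A,B,C,D\}$ and the prism on $\{Y,Z,W,B,C,D\}$ (punctured front square, depth edges, and punctured back square), glued along the punctured back square $B\to D\leftarrow C$; since every chain of the poset lies in one of these two pieces, the nerve of the punctured cube is the pushout of the nerves along the nerve of $B\to D\leftarrow C$, whence
\[
\lim_{\square^3\setminus\{X\}}F\;\simeq\;\bigl(\textstyle\lim_{\text{back square}}F\bigr)\times_{B\times_D C}\bigl(Y\times_W Z\bigr)\;\simeq\;A\times_{B\times_D C}(Y\times_W Z),
\]
as you claim. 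With the gluing locus repaired, the rest of your argument goes through, including the compatibility of the comparison map from $X$ with the projection to $Y\times_W Z$.
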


\begin{proof}[Proof of \autoref{Prop:Corr21Segal}]
Checking the Segal conditions comes down to checking that the composition of two invertible squares is invertible, so we need to consider a diagram of the form
\[
\stik{1}{
{} \& {} \& [-2em]F_1\& {} \& {}\\
A_1 \& B_1 \ar{l} \& F_2\& D_1 \ar{r}\& E_1\\
A_2  \ar{u} \ar{d}\& B_2\ar{l} \ar{u} \ar{d}\& F_3\ar[phantom, "C_1"{name=C1,at start,xshift=1em}] \& D_2 \ar{r} \ar{u} \ar{d}\& E_2  \ar{u} \ar{d}\\
A_3 \& B_3 \ar{l} \& \phantom{F_4}\ar[phantom, "C_2"{name=C2,at start,xshift=1em}] \& D_3 \ar{r}\& E_3\\
{} \& {} \&  \phantom{F_5}\ar[phantom, "C_3"{name=C3,at start,xshift=1em}] \& {} \& {}
\ar[from=2-2,to=C1]
\ar[from=3-2,to=C2]
\ar[from=4-2,to=C3]
\ar[from=2-4,to=C1]
\ar[from=3-4,to=C2]
\ar[from=4-4,to=C3]
\ar[from=C2,to=C1]
\ar[from=C2,to=C3]
%F to Barrows:
\ar[from=1-3,to=2-2,crossing over]
\ar[from=2-3,to=3-2,crossing over]
\ar[from=3-3,to=4-2,crossing over]
%F to D arrows:
\ar[from=1-3,to=2-4,crossing over]
\ar[from=2-3,to=3-4,crossing over]
\ar[from=3-3,to=4-4,crossing over]
%F to F arrows:
\ar[from=2-3,to=1-3,crossing over]
\ar[from=2-3,to=3-3,crossing over]
}

\]
and for instance we need to check that the composition
\[
\stik{1}{
F_2\ar{r}\ar{d} \& E_2\ar{d}\\
F_1 \ar{r} \& E_1
}
\]
is a pullback square. by assumption the square $\squCorns{D_2}{E_2}{D_1}{E_1}$ is a pullback, so by the pasting lemma for pullbacks what we want to show is equivalent to
\[
\stik{1}{
F_2\ar{r}\ar{d} \& D_2\ar{d}\\
F_1 \ar{r} \& D_1
}
\]
being a pullback square. Using \autoref{Lem:pullbackcube} and noting that the opposite cube faces $\squCorns{F_2}{B_2}{D_2}{C_2},\squCorns{F_1}{B_1}{D_1}{C_1}$ are pullbacks (by assumption in \autoref{def:NuppleCorr}), this is equivalent to $\squCorns{B_2}{C_2}{B_1}{C_1}$ being a pullback square, which is also true by assumption.
The other checks are identical.
\end{proof}

The above discussion can be summarized by:
\begin{Proposition}
The imbedding $\Corr_2(\Spaces)^{\CAT{1}}\hookrightarrow \Corr_2(\Spaces)$ factors through $\HaugsengU(\CORR_{2,1}(\Spaces))\hookrightarrow\Corr_2(\Spaces)$ and the map $$\Corr_2(\Spaces)^{\CAT{1}}\to\HaugsengU(\CORR_{2,1}(\Spaces))$$ is an equivalence.
\end{Proposition}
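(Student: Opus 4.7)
The plan is to establish the two claims by working at each level of cells separately, since the three objects $\Corr_2(\Spaces)^{\CAT{1}}$, $\HaugsengU(\CORR_{2,1}(\Spaces))$, and $\Corr_2(\Spaces)$ all share the same space of $0$-cells and the same space of $1$-cells (the spaces of objects and of correspondences in $\Spaces$), so the content of the proposition is entirely about $2$-cells and higher. First I would unpack the relation between 2-cells: by construction, every $(1,1)$-cell of $\CORR_{2,1}(\Spaces)$ is a square of correspondences whose upper-right and lower-left sub-squares are pullbacks, and by the preceding discussion, these are exactly the squares that present an invertible $2$-morphism in $\Corr_2(\Spaces)$.

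For the factorization statement, given a $2$-cell of $\Corr_2(\Spaces)^{\CAT{1}}$, i.e.\ an invertible $2$-cell of $\Corr_2(\Spaces)$, the criterion recalled above shows it comes from an element of $\CORR_{2,1}(\Spaces)$. Since $\HaugsengU$ is functorial and the inclusion $\CORR_{2,1}(\Spaces)\hookrightarrow\CORR_2(\Spaces)$ is a monomorphism levelwise, the inclusion of the underlying $(\infty,1)$-category factors through $\HaugsengU(\CORR_{2,1}(\Spaces))$ as claimed.

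For the equivalence, I would check that $\HaugsengU(\CORR_{2,1}(\Spaces))$ is itself an $(\infty,1)$-category, i.e.\ that every $2$-cell in it is invertible in $\Corr_2(\Spaces)$. This is automatic at the level of $(1,1)$-cells by the same invertibility criterion in the reverse direction: every square in $\CORR_{2,1}(\Spaces)$ has its upper-right and lower-left sub-squares pullback, hence presents an equivalence between its horizontal composites. Consequently the inclusion $\HaugsengU(\CORR_{2,1}(\Spaces))\hookrightarrow\Corr_2(\Spaces)$ lands inside $\Corr_2(\Spaces)^{\CAT{1}}$ and, combined with the previous paragraph, both inclusions have the same image, proving the equivalence.

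The main obstacle I anticipate is bookkeeping of the higher $(m,n)$-cells when passing through $\HaugsengU$: one must verify that imposing the pullback condition on every constituent square of a grid in the $2$-uple Segal structure $\CORR_{2,1}(\Spaces)$ is enough to ensure that the resulting cells of $\HaugsengU(\CORR_{2,1}(\Spaces))$ are all (higher) invertible in $\Corr_2(\Spaces)$. This reduces to an iterated application of \autoref{Lem:pullbackcube} (exactly as in the proof of \autoref{Prop:Corr21Segal}), allowing one to propagate the invertibility statement from the $(1,1)$-level to arbitrary grids, and from this the full equivalence of $(\infty,2)$-categorical enhancements follows.
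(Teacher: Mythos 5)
Your proposal is correct and follows essentially the same route as the paper, which states this proposition as a summary of the preceding discussion: the identification of invertible $2$-cells of $\Corr_2(\Spaces)$ (Rezk-type squares of correspondences) with those squares whose upper-right and lower-left sub-squares are pullbacks, i.e.\ the cells of $\CORR_{2,1}(\Spaces)$. Your additional care about propagating invertibility to higher grid cells via iterated use of \autoref{Lem:pullbackcube} matches how the paper handles the analogous point in the $N>2$ case via \autoref{Cor:pullbackNcube}.
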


Note also that using the imbedding $\CAT{1}\hookrightarrow\CAT{2}$ we have a map $$\Corr_1(\Spaces)\to\Corr_2(\Spaces)$$ which factors through $\Corr_2(\Spaces)^{\CAT{1}}$ by adjointness and we have:
\begin{Proposition}
The map $\Corr_1(\Spaces)\to\Corr_2(\Spaces)^{\CAT{1}}$ is an equivalence.
\end{Proposition}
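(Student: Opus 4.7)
The plan is to use the preceding proposition, which identifies $\Corr_2(\Spaces)^{\CAT{1}}$ with $\HaugsengU(\CORR_{2,1}(\Spaces))$, and then to verify that the canonical comparison $\Corr_1(\Spaces)\to\HaugsengU(\CORR_{2,1}(\Spaces))$ is an equivalence of complete $1$-fold Segal spaces. Since both sides are $(\infty,1)$-categories, it suffices to check the map is an equivalence on the space of objects and on each mapping space.

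On objects both sides compute $\Maps(N(\CorrCube{0}),\Spaces)$, the underlying space of $\Spaces$, and $\HaugsengU$ leaves the space of $0$-cells of a $2$-upple Segal space unchanged. Hence the map is an equivalence on objects.

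For mapping spaces, fix $A,B\in\Spaces$. The mapping space in $\Corr_1(\Spaces)$ is the space of correspondences $A\leftarrow X\to B$ as recalled in \autoref{sec:Corr1}. On the other side, the mapping space $\Map(A,B)$ in $\HaugsengU(\CORR_{2,1}(\Spaces))$ is, by the construction of $\HaugsengU$ in \cite{HaugsengSpans}, the underlying $\infty$-groupoid of the $(\infty,1)$-category whose objects are the $(1,0)$-cells of $\CORR_{2,1}(\Spaces)$ with horizontal boundary $(A,B)$, and whose morphisms are the $(1,1)$-cells with degenerate vertical boundaries $A\leftarrow A\to A$ and $B\leftarrow B\to B$. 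The objects are again correspondences $A\leftarrow X\to B$. By the invertibility condition defining $\CORR_{2,1}(\Spaces)$, the morphisms are invertible squares whose upper-right and lower-left sub-squares are pullbacks; with degenerate vertical sides these pullback conditions force $Z\to E$ and $Z\to H$ to be equivalences, so the top and bottom correspondences are canonically equivalent via the middle object. Conversely any such equivalence of correspondences over $A,B$ lifts essentially uniquely to an invertible square of this shape. Thus the resulting $\infty$-groupoid agrees with the space of correspondences from $A$ to $B$, matching $\Corr_1(\Spaces)$.

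The main technical hurdle is the careful unfolding of mapping $(\infty,1)$-categories inside $\HaugsengU$ of a $2$-upple Segal space and the verification that invertible squares with degenerate vertical sides are controlled exactly by equivalences of correspondences. Once this is done, naturality in $A,B$ is automatic from functoriality of $\HaugsengU$, and the equivalence on objects and mapping spaces upgrades to the desired equivalence of complete $1$-fold Segal spaces.
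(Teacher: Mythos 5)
Your argument is correct in outline, but it takes a genuinely different route from the paper's. The paper proves this proposition directly at the level of simplicial spaces: it identifies the squares lying in the image of $\Corr_1(\Spaces)\to\Corr_2(\Spaces)$ (those whose top half is degenerate and whose bottom half consists of equivalences) and observes that this space of squares is canonically a retract of the corresponding space of Rezk-type cells of $\Corr_2(\Spaces)^{\CAT{1}}$; the identification $\Corr_1(\Spaces)\simeq\HaugsengU(\CORR_{2,1}(\Spaces))$ is then deduced as a corollary by combining this with the preceding proposition. You invert that logic: you use the preceding proposition to reduce to the corollary's statement and prove \emph{that} directly, via the fully-faithful-plus-essentially-surjective criterion, checking the space of objects and the mapping spaces. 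This is not circular, and your analysis of the mapping spaces is sound --- with degenerate vertical boundaries the two pullback conditions of the invertibility definition do force $Z\to E$ and $Z\to H$ to be equivalences, so the groupoid of correspondences and invertible squares is the groupoid of correspondences and their equivalences, whose classifying space is the space of correspondences (using completeness). What the paper's retraction argument buys is brevity and the avoidance of any unwinding of $\HaugsengU$; what yours buys is an explicit description of the mapping spaces and a transparent account of exactly where the pullback conditions are used. The one place you should be careful is the sentence identifying $\Map(A,B)$ in $\HaugsengU(\CORR_{2,1}(\Spaces))$ with the fiber of the $(1,\bullet)$-spaces over $(A,B)$: this does require unpacking Haugseng's right adjoint, and you correctly flag it as the technical hurdle rather than claiming it is automatic. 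At the level of rigor the paper itself operates at, your proof is acceptable.
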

\begin{proof}
The squares of the imbedding $\Corr_1(\Spaces)\to\Corr_2(\Spaces)$ are those of the form
\[
\stik{1}{
A \& E \ar{l} \ar{r} \& B \\
A \ar[equals]{u} \ar{d}[below,sloped]{\sim}\& E \ar[equals]{u} \ar{d}[below,sloped]{\sim} \ar{l} \ar{r} \& B\ar[equals]{u} \ar{d}[below,sloped]{\sim} \\
C \& H \ar{l} \ar{r} \& D \\
}
\]
and it is clear that this space is canonically a retract of the corresponding space for $\Corr_2(\Spaces)^{\CAT{1}}$
\end{proof}

\begin{Corollary}
$\Corr_1(\Spaces)$ is equivalent to $\HaugsengU(\CORR_{2,1}(\Spaces))$
\end{Corollary}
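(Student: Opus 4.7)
The plan is simply to chain together the two equivalences established in the two immediately preceding propositions. The first one gives an equivalence
\[
\Corr_2(\Spaces)^{\CAT{1}} \xrightarrow{\sim} \HaugsengU(\CORR_{2,1}(\Spaces))
\]
(realized as the factorization of the inclusion $\Corr_2(\Spaces)^{\CAT{1}}\hookrightarrow \Corr_2(\Spaces)$ through $\HaugsengU(\CORR_{2,1}(\Spaces))$), while the second one gives an equivalence
\[
\Corr_1(\Spaces) \xrightarrow{\sim} \Corr_2(\Spaces)^{\CAT{1}}.
\]
Composing them yields the desired equivalence $\Corr_1(\Spaces)\simeq \HaugsengU(\CORR_{2,1}(\Spaces))$.

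The only thing worth being careful about is that the composite equivalence agrees, up to the identifications above, with the natural comparison map: on $(1)$-cells both sides are correspondences $A \leftarrow E \rightarrow B$, and on $2$-cells the composite sends a correspondence square of $\Corr_1(\Spaces)$ (whose vertical edges are equivalences) to the invertible square of correspondences in $\CORR_{2,1}(\Spaces)$ obtained by ``padding with identities'', which is manifestly a pair of pullback squares. Since both displayed arrows above are already equivalences of complete $N$-fold Segal spaces (resp.\ $(\infty,1)$-categories after applying $\HaugsengU$), there is no additional content here.

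I do not expect any genuine obstacle: the corollary is essentially a formal consequence of the two propositions, and the main step is just to note that composing equivalences gives an equivalence. The only minor bookkeeping is to verify that the two propositions can be composed, i.e.\ that they concern the same underlying map $\Corr_1(\Spaces)\to\Corr_2(\Spaces)$ factored in two ways through $\Corr_2(\Spaces)^{\CAT{1}}$; this is immediate from how each factorization is constructed by adjointness from the inclusion $\CAT{1}\hookrightarrow\CAT{2}$.
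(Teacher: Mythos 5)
Your proposal is correct and matches the paper's (implicit) argument exactly: the corollary is stated without proof precisely because it follows by composing the equivalences $\Corr_1(\Spaces)\xrightarrow{\sim}\Corr_2(\Spaces)^{\CAT{1}}$ and $\Corr_2(\Spaces)^{\CAT{1}}\xrightarrow{\sim}\HaugsengU(\CORR_{2,1}(\Spaces))$ from the two preceding propositions. Your remark that both factorizations arise from the same underlying comparison map via the inclusion $\CAT{1}\hookrightarrow\CAT{2}$ is the right bookkeeping point and is consistent with how the paper sets things up.
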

This means that if we have a map of $2$-uple Segal spaces $\DDD\to\CORR_2(\Spaces)$ which factors through $\CORR_{2,1}(\Spaces)$ then the corresponding map $\HaugsengU(\DDD)\to\Corr_2(\Spaces)$ factors through $\Corr_1(\Spaces)$.

\subsubsection{The \texorpdfstring{$N>2$}{N>2} case}
\begin{Definition}
\label{invertiblecubes}
Using \autoref{not:corrsubcube}, we say that an $N$-cube $C$ of correspondences is \emph{invertible} if the $N$-cubes $C^{00\ldots0}_{+-+-\ldots}$ and $C^{00\ldots0}_{-+-+\ldots}$ are pullback cubes.
\end{Definition}

\begin{Definition}
Let $\CORR_{N,(N-1)}(\Spaces)$ be the sub-$\OrdSet^{\times N}$ space of $\CORR_N(\Spaces)$ which has $N$-dimensional cells only those where all cubes of correspondences involved are invertible.
\end{Definition}

An identical line of reasoning to the $N=2$ case, using \autoref{Cor:pullbackNcube} yields:

\begin{Theorem}
\label{Prop:CORRInvertibleCorr}
$\Corr_{(N-1)}(\Spaces)$ is equivalent to $\HaugsengU(\CORR_{N,(N-1)}(\Spaces))$
\end{Theorem}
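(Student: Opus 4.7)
The plan is to follow exactly the same three-step pattern as in the $N=2$ case, replacing every use of the two-dimensional pullback cube lemma by its $N$-dimensional analogue \autoref{Cor:pullbackNcube}. So first I would verify that $\CORR_{N,(N-1)}(\Spaces)$ is a genuine $N$-upple Segal space, i.e. that the composition of two invertible $N$-cubes of correspondences in any of the $N$ directions is again invertible. Concretely, given two $N$-cubes sharing a common face, one considers the two ``diagonal'' sub-$N$-cubes $C^{00\ldots0}_{+-+-\ldots}$ and $C^{00\ldots0}_{-+-+\ldots}$ of the composite and, using the pasting lemma for pullbacks together with the fact that all relevant outer faces are pullback cubes by hypothesis, reduces the problem of showing they are pullback cubes to a pullback-cube property that already holds on the two factors. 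This is where \autoref{Cor:pullbackNcube} is the crucial input, playing precisely the role \autoref{Lem:pullbackcube} played in the proof of \autoref{Prop:Corr21Segal}.

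Next I would analyze the inclusion $\Corr_N(\Spaces)^{\CAT{(N-1)}}\hookrightarrow\Corr_N(\Spaces)$ and show it factors through $\HaugsengU(\CORR_{N,(N-1)}(\Spaces))$. Unwinding the definitions, an $N$-cell of the left-hand side is an $N$-cube of correspondences in which the top $(N-1)$-dimensional morphism between the source and target $(N-1)$-cells is invertible; this forces the two diagonal sub-$N$-cubes distinguished in \autoref{invertiblecubes} to be pullback cubes, exactly by the pasting/inductive use of \autoref{Cor:pullbackNcube}. Conversely, any $N$-cube satisfying those two pullback conditions has an invertible top $(N-1)$-cell, so the induced map $\Corr_N(\Spaces)^{\CAT{(N-1)}}\to\HaugsengU(\CORR_{N,(N-1)}(\Spaces))$ is an equivalence on $N$-cells; it is already an equivalence on lower cells because $\HaugsengU$ and the truncation $(-)^{\CAT{(N-1)}}$ agree in degrees below $N$.

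Finally, I would show that the canonical map $\Corr_{N-1}(\Spaces)\to\Corr_N(\Spaces)^{\CAT{(N-1)}}$ induced by the inclusion $\CAT{(N-1)}\hookrightarrow\CAT{N}$ (and the adjointness of the truncation) is an equivalence. As in the $N=2$ argument, an $N$-cell of $\Corr_N(\Spaces)^{\CAT{(N-1)}}$ coming from $\Corr_{N-1}(\Spaces)$ is one in which all arrows in the $N$-th direction are identities (or equivalences); this identifies the $(N-1)$-cells on both sides with the same space of $(N-1)$-cubes of correspondences in $\Spaces$, and one sees the corresponding space on the right is canonically a retract of the space on the left, forcing the equivalence. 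Combining the last two equivalences yields the theorem.

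The main obstacle is the first step: spelling out the Segal-condition check for $N$-upple compositions of invertible $N$-cubes requires a careful bookkeeping of which diagonal sub-cubes are being composed and a repeated application of \autoref{Cor:pullbackNcube} with the higher-dimensional pasting lemma. Everything else is a direct transcription of the $N=2$ argument, but that combinatorial verification has to be made precise in order for the rest of the argument to go through cleanly.
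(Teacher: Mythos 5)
Your plan is correct and follows essentially the same route as the paper, which itself only says the general case is ``an identical line of reasoning to the $N=2$ case, using \autoref{Cor:pullbackNcube}''; your three steps are exactly the transcription of \autoref{Prop:Corr21Segal}, the factorization of $\Corr_2(\Spaces)^{\CAT{1}}\hookrightarrow\Corr_2(\Spaces)$, and the retract argument for $\Corr_1(\Spaces)\to\Corr_2(\Spaces)^{\CAT{1}}$. The combinatorial bookkeeping you flag as the main obstacle is indeed the content the paper leaves implicit, so spelling it out would only make the argument more complete than the published version.
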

As before, this means that if we have a map of $N$-uple Segal spaces $\DDD\to\CORR_N(\Spaces)$ which factors through $\CORR_{N,(N-1)}(\Spaces)$ then the corresponding map $\HaugsengU(\DDD)\to\Corr_N(\Spaces)$ factors through $\Corr_{(N-1)}(\Spaces)$.

Extending this inductively we can define $\CORR_{N,k}(\Spaces)$ which is equivalent to $\CORR_k(\Spaces)$ for $k\leq N$, and get

\begin{Corollary}
\label{thm:CorrFactors}
Suppose we have a map of $N$-uple Segal spaces $\DDD\to\CORR_N(\Spaces)$ which factors through $\CORR_{N,k}(\Spaces)$, then $\HaugsengU(\DDD)\to\Corr_N(   \Spaces)$ factors through $\Corr_k(\Spaces)$.
\end{Corollary}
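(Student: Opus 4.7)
The plan is to proceed by downward induction on $k$, with the base case $k = N-1$ handled by \autoref{Prop:CORRInvertibleCorr}. Implicit in the statement is the inductive definition of $\CORR_{N,k}(\Spaces)$ as the sub-$N$-upple Segal space of $\CORR_{N,k+1}(\Spaces)$ consisting of those cells whose $(k+1)$-dimensional sub-cubes of correspondences are invertible in the sense of \autoref{invertiblecubes}. One first checks, exactly as in \autoref{Prop:Corr21Segal} but invoking the $n$-cube pasting lemma \autoref{Cor:pullbackNcube} in place of \autoref{Lem:pullbackcube}, that this sub-space is stable under the face and degeneracy maps of $\CORR_N(\Spaces)$, so that it is indeed a sub-$N$-upple Segal space.

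Assuming the statement for $k+1$, suppose $\DDD \to \CORR_N(\Spaces)$ factors through $\CORR_{N,k}(\Spaces)$. Since by construction $\CORR_{N,k}(\Spaces) \subseteq \CORR_{N,k+1}(\Spaces)$, the induction hypothesis gives a factorization of $\HaugsengU(\DDD) \to \Corr_N(\Spaces)$ through $\Corr_{k+1}(\Spaces)$. By the characterization $\Corr_k(\Spaces) \simeq \Corr_{k+1}(\Spaces)^{\CAT{k}}$ (i.e. $\Corr_k$ sits inside $\Corr_{k+1}$ as the sub-$(\infty,k)$-category obtained by discarding non-invertible top cells), refining this to a factorization through $\Corr_k(\Spaces)$ amounts to showing that every $(k+1)$-morphism in the image of $\HaugsengU(\DDD)$ is invertible. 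These $(k+1)$-morphisms correspond to $(k+1)$-cubes of correspondences arising from the cells of $\DDD$, and the argument of \autoref{Prop:CORRInvertibleCorr} shows that invertibility in the $(\infty,k+1)$-categorical sense is equivalent to invertibility in the sense of \autoref{invertiblecubes}. The latter holds by hypothesis.

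The main obstacle is the verification that $\CORR_{N,k}(\Spaces)$ is itself an $N$-upple Segal space, i.e. that the invertibility condition imposed on $(k+1)$-sub-cubes is compatible with the Segal gluing of cells along inner face maps. As in the proof of \autoref{Prop:Corr21Segal}, the pullback subcubes already provided by \autoref{def:NuppleCorr} must be combined with the newly imposed invertibility pullback cubes, and the fact that the resulting composite cube is again invertible follows from iterated application of \autoref{Cor:pullbackNcube}. Once this bookkeeping is carried out, the recursive identification $\HaugsengU(\CORR_{N,k}(\Spaces)) \simeq \Corr_k(\Spaces)$ follows at each step, and the factorization statement of the corollary is an immediate formal consequence via the adjunction properties of $\HaugsengU$ used already in the $N=2$ case.
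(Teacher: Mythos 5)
Your proposal is correct and follows essentially the same route as the paper, which treats the corollary as a formal consequence of the downward-inductive definition of $\CORR_{N,k}(\Spaces)$, the base case \autoref{Prop:CORRInvertibleCorr}, and the identification $\HaugsengU(\CORR_{N,k}(\Spaces))\simeq\Corr_k(\Spaces)$ at each stage. Your explicit verification that the invertibility conditions are stable under Segal composition (via \autoref{Cor:pullbackNcube}, generalizing \autoref{Prop:Corr21Segal}) fills in a step the paper leaves implicit in its phrase ``extending this inductively,'' but it is the same argument.
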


\section{Construction of Hall algebra data}
\label{sec:CombinatorialHall}

Our goal in this section is to construct, starting from a simplicial object $S_\bullet$, the data of an $\Ainfinity$–algebra (as in \autoref{sec:Ainfinity}) which we call the \emph{Hall algebra} of $S_\bullet$. In \autoref{sec:HigherSegal} we then give a precise criterion for when this data is associative to various degrees.

\begin{Notation}
\label{def:SimplicialObjectInSpaces}
Let $\Spaces$ be an $(\infty,1)$-category which admits small limits and consider a simplicial object $S_\bullet \in \Spaces^{\OrdSet^{op}}$ which sends $\ordop{0}$ to the final object of $\Spaces$.
\end{Notation}
\begin{Remark}
Note that the simplicial objects given by the Waldhausen construction and it's higher dimensional generalizations as described in \cite{KapranovDyckerhoff}, \cite{PoguntkeSegal} and \cite{DyckerhoffJasso} satisfy the above condition.
\end{Remark}
The product of the Hall algebra (i.e. the image of the 1-dimensional associator cube) is given by the correspondence
\begin{equation}
\label{eq:MultCorr}
\stik{1}{
{} \& S_2 \ar{dl}[above, xshift=-0.7em]{\delta_0\times\delta_2} \ar{dr}{\delta_1} \& {} \\
S_1\times_{S_0} S_1 \& {} \& S_1
}
\end{equation}
where the maps $\delta_i$ are the face maps.  

We next need to construct a square with boundary

\begin{equation}
\label{eq:assocsquareBoundary}
\stik{1}{
S_1^3 \& S_2\times_{S_0} S_1 \ar{l} \ar{r} \& S_1\times_{S_0} S_1 \\
S_1\times_{S_0} S_2 \ar{u} \ar{d} \& {} \&  S_2 \ar{u} \ar{d} \\
S_1\times_{S_0} S_1 \& S_2 \ar{l} \ar{r} \& S_1
}
\end{equation}

i.e. we need to construct a square
\[
\stik{1}{
S_1^3 \& S_2\times S_1 \ar{l} \ar{r} \& S_1\times S_1 \\
S_1\times S_2 \ar{u} \ar{d} \& X\ar{u} \ar{d} \ar{l} \ar{r} \&  S_2 \ar{u} \ar{d} \\
S_1\times S_1 \& S_2 \ar{l} \ar{r} \& S_1
}
\]

The natural object to put in the middle is $S_3$, and we will see that the requirement that this square is invertible is equivalent to a subset of the 2-Segal conditions arising from the triangulations of a square.

In order to give an explicit construction for all dimensions, the general construction will be work as follows: We will describe a combinatorial construction of cubes of correspondences in $\sset^{op}$, and then apply $S_\bullet$ to get a cube of correspondences in $\Spaces$.
Since it is no extra work, as well as describing the cubes corresponding to the associator cubes, we will construct cubes corresponding to any $n$-cube in $\AugOrdSet$.

\subsection{Construction in \texorpdfstring{$\sset^{op}$}{sset op}}
\label{Hcomb}
Here we describe a construction of a system of $n$-cubes of correspondences in $\sset^{op}$, indexed by $n$-cubes in $\AugOrdSet$.

Recall the augmentation map $\aug{-}:\AugOrdSet\to\sset^{op}$ from \autoref{Def:Augmentation}. The following is central to our definition of $H_{comb}$:
\begin{Lemma}
\label{Cons:AugmentationExtension}
Suppose $X$ is an interval of a linearly ordered set $Y$. Let $i:X\to Y$ be the inclusion and $j:Y\setminus X\to Y$ be the inclusion of the complement. Then there is a unique map $i_!:\aug{X}\to\aug{Y}$ such that $j^*i_!$ is constant (as a map from $\aug{X}$ to $\aug{Y\setminus X}$) and $i^*i_!=\Id$.
\end{Lemma}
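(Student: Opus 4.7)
The plan is to work pointwise: unpack $\aug{X}=\Hom_{\AugOrdSet}(X,\ord{2})$ as the poset of monotone maps $f\colon X\to\{0\to 1\}$ and view a candidate $i_!\colon \aug{X}\to\aug{Y}$ as an order-preserving assignment $f\mapsto i_!(f)$ of monotone functions $Y\to\{0\to 1\}$. Both sides are nerves of linear posets, so the underlying simplicial sets are standard simplices and simplicial maps between them are in bijection with order-preserving maps of the vertex posets; under this identification $i^{*}$ and $j^{*}$ are literally restriction along the inclusions $X\hookrightarrow Y$ and $Y\setminus X\hookrightarrow Y$.

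Next I would use the interval hypothesis to decompose $Y\setminus X=L\sqcup R$ with $L=\{y\in Y:y<X\}$ and $R=\{y\in Y:y>X\}$. For existence, define
\[
i_!(f)(y) := \begin{cases} 0 & y\in L, \\ f(y) & y\in X, \\ 1 & y\in R. \end{cases}
\]
Then three routine checks suffice: (i) each $i_!(f)$ is monotone on $Y$, which is immediate from $L<X<R$ together with $0\leq f(\min X)$ and $f(\max X)\leq 1$; (ii) $f\mapsto i_!(f)$ is itself order-preserving, because the only coordinates that depend on $f$ are those in $X$; (iii) the two stated properties hold — $i^{*}i_!=\Id$ is tautological, while $j^{*}i_!$ is the constant map picking out the function $L\ni y\mapsto 0$, $R\ni y\mapsto 1$.

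For uniqueness, suppose $i_!'$ also satisfies the two conditions. Then $i^{*}i_!'=\Id$ forces $i_!'(f)|_X=f$, and constancy of $j^{*}i_!'$ produces a single monotone map $g\colon Y\setminus X\to\{0\to 1\}$ with $i_!'(f)|_{Y\setminus X}=g$ for every $f$. Monotonicity of $i_!'(f)$ on $Y$ demands $g(y)\leq f(\min X)$ for $y\in L$ and $g(y)\geq f(\max X)$ for $y\in R$; feeding in $f\equiv 0$ and $f\equiv 1$ respectively pins down $g|_L\equiv 0$ and $g|_R\equiv 1$, so $i_!'=i_!$.

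I do not anticipate a genuine obstacle: the content is really just the observation that once monotonicity is demanded for all $f$, the interval structure of $X\subseteq Y$ leaves no freedom in how to extend constantly on the complement. The only mildly degenerate case is $X=\emptyset$, where monotonicity imposes no conditions on $g$, and one has to adopt the natural convention $g|_L\equiv 0$, $g|_R\equiv 1$ in order to preserve the uniqueness statement.
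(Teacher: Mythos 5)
Your proof is correct and follows essentially the same route as the paper's: the extension is defined to be $0$ below $X$ and $1$ above $X$, and uniqueness is pinned down by evaluating at the constant maps $0,1\in\aug{X}$, exactly as in the paper. Your extra observation about the degenerate case $X=\emptyset$ (where uniqueness requires a convention) is a fair point that the paper's one-line proof glosses over, but it does not affect the substance.
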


\begin{proof}
Considering the constant maps $0,1:X\to (0\to 1)$ we see that for any $\varphi\in\aug{X}$, $i_!$ must be $0$ on all elements of $Y$ below $X$ and $1$ on all elements of $Y$ above $X$. It is immediate that this defines a map of ordered sets satisfying the requirements of the lemma.
\end{proof}

\begin{Definition}
\label{Def:HCombOfGenerealMaps}
Let $\alpha:X\to Y$ be a map of ordered sets. Define $H_{comb}(\alpha)$ to be the sub simplicial set in $\aug{X}$ generated by the imbeddings of $\aug{(\alpha)^{-1}(y)}$ (using \autoref{Cons:AugmentationExtension}) for all $y\in Y$.
\end{Definition}

\begin{Construction}
\label{Lem:GeneralCombConstruction}
Consider a diagram of maps in $\AugOrdSet$
\[
X\xrightarrow{f}Y\xrightarrow{g}Z
\]
and let $\alpha=g\circ f$. Then we have maps $H_{comb}(f)\to H_{comb}(\alpha)$, $H_{comb}(g)\to H_{comb}(\alpha)$ given as follows:

\flushleft{\textbf{Case 1}: $H_{comb}(f)\to H_{comb}(g\circ f)$}

$H_{comb}(f)$ is generated by $\aug{f^{-1}(y)},y\in Y$. Let $z\in Z$. For any $y\in g^{-1}(z)$ the ordered set $f^{-1}(y)$ is a sub-interval in the interval $\alpha^{-1}(z)$.
Therefore $\aug{f^{-1}}(y)\subset\aug{\alpha^{-1}(z)}$ where both are considered inside $\aug{X}$ via \autoref{Cons:AugmentationExtension}. More precisely, the following diagram commutes
\[
\stik{1}{
\aug{f^{-1}}(y) \ar{d} \ar[hookrightarrow]{dr} \& {}\\
\aug{\alpha^{-1}(z)} \ar[hookrightarrow]{r} \& \aug{X}
}
\]
where the maps are the ones from \autoref{Cons:AugmentationExtension}.

\flushleft{\textbf{Case 2}: $H_{comb}(g)\to H_{comb}(g\circ f)$}

Recall that $H_{comb}(g)$ is generated inside $\aug{Y}$ by $\aug{g^{-1}(z)},z\in Z$ and $H_{comb}(\alpha)$ is generated inside $\aug{X}$ by $\aug{\alpha^{-1}(z)}=\aug{f^{-1}(g^{-1}(z))}$.
Therefore $f$ gives a map $\alpha^{-1}(z)\to g^{-1}(z)$ which by contravariance of $\aug{-}$ gives our desired map.
\end{Construction}

We can now construct the image of $H_{comb}$ on $n$-cubes.
\begin{Definition}
\label{def:Hcomb}
To an $n$-cube in $\AugOrdSet$ we perscribe a $n$-cube of correspondences in $\sset^{op}$ as follows:
The centers of the $k$-faces are given by sequences of $k$ composable maps on the given cube using \autoref{Def:HCombOfGenerealMaps}, and the maps to the centers of the corresponding $k+1$ faces are given by \autoref{Lem:GeneralCombConstruction}.
\end{Definition} 
\begin{Proposition}
\label{Prop:HCombCubesConstructionCommutes}
The cubes of correspondences so constructed commute.
\end{Proposition}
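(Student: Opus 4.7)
The plan is to reduce commutativity of the cube to that of its elementary cover-arrow squares and then analyze these squares case by case. Since $\CorrCube{n}$ is a 1-poset and the target $\sset^{op}$ is a 1-category, a functor $N(\CorrCube{n})\to\sset^{op}$ is just a commutative diagram, and its well-definedness reduces to checking that every square formed by a pair of cover relations sharing a common source commutes. A cover relation in $\CorrCube{n}$ replaces exactly one $0$-coordinate of a face by either $-$ or $+$; by \autoref{Lem:GeneralCombConstruction} the former is sent to a Case~1 map (an inclusion of subsimplicial sets in a common $\aug{X}$) and the latter to a Case~2 map (a contravariant pullback into a different $\aug{X}$, induced by the corresponding edge of the underlying $n$-cube in $\AugOrdSet$).

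Elementary squares therefore fall into three types according to the signs of the two cover relations: $(-,-)$, $(+,+)$, and the mixed case $(+,-)$. The $(-,-)$ case is immediate: both paths are inclusions of subsimplicial sets taken inside the same augmented nerve, and the composite inclusion is uniquely determined. The $(+,+)$ case reduces to the commutativity of the underlying $2$-sub-face of the $n$-cube in $\AugOrdSet$: since the two edge-compositions around that $2$-face agree, so do their contravariant pullbacks.

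The mixed case $(+,-)$ is the only non-formal verification. Here one path first restricts and then pulls back, while the other first pulls back and then restricts. On a typical generator $\sigma\in\aug{\alpha^{-1}(y)}$ indexing a piece of the $(k-2)$-face's subsimplicial set, one unpacks both paths using the explicit descriptions of \autoref{Lem:GeneralCombConstruction} and checks that both produce the same function: namely the one that agrees with $\sigma\circ f$ on the relevant fiber of the $k$-face's composite, where $f$ is the restriction of the pushed-forward edge, extended by $0$ below and by $1$ above in the ambient ordered set. The agreement rests on the uniqueness clause of \autoref{Cons:AugmentationExtension}.

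The main obstacle is this mixed case: Case~1 (inclusion via extension by $0/1$) and Case~2 (precomposition with a fiber map) are operations of genuinely different flavors, and their commutation is not formal but requires an explicit combinatorial tracing through how the fibers of composite maps factor through the fibers of their factors. Once this is verified on generators, commutativity of the full cube follows by the reduction to cover squares.
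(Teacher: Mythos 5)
Your proof is correct and follows essentially the same route as the paper's: reduce commutativity of the cube to its elementary squares, classify them by whether the two cover relations are both inclusions, both contravariant pullbacks, or mixed, and dispose of the first two types exactly as the paper does (uniqueness of composite inclusions inside a fixed $\aug{X}$, and functoriality applied to the commuting $2$-face of the underlying cube in $\AugOrdSet$). The only difference is one of emphasis: the paper dismisses the mixed square in one line ("moving horizontally essentially does nothing, moving vertically is precomposing with $f$"), whereas you single it out as the substantive check and verify on generators that extension by $0/1$ commutes with precomposition along fibers via the uniqueness clause of \autoref{Cons:AugmentationExtension} — a correct and in fact more careful account of the same point.
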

Proof in \autoref{ProofOf:Prop:HCombCubesConstructionCommutes}

\subsubsection{Examples for small \texorpdfstring{$n$}{n}}
\flushleft{\textbf{Case $n=0$ - objects}}

A $0$-cube in $\AugOrdSet$ is just an object $X\in\AugOrdSet$. Therefore when applying the construction there are no maps and all we need to consider is a composition of $0$ maps, namely $\Id_X$. So $H_{comb}(X)=H_{comb}(\Id_X)$.

\begin{Example}
The first few values of $H_{comb}$ on the objects of $\AugOrdSet$ are as follows (given for clarity along with the imbedding in $\aug{X}$).
\begin{itemize}
\item $H_{comb}(\ord{0})=\ordop{0}$
\item $H_{comb}(\ord{1})=\ordop{1}$ 
\item $H_{comb}(\ord{2})$ is the horn 
\includegraphics[scale=0.3]{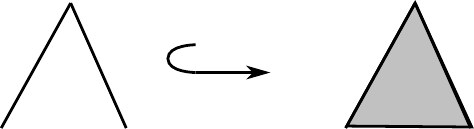}
\item $H_{comb}(\ord{3})$ is \includegraphics[scale=0.3]{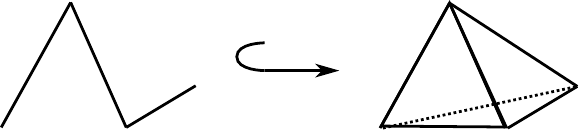}
\end{itemize}
\end{Example}

\flushleft{\textbf{Case $n=1$ - arrows}}

Let $X \xrightarrow{f} Y$ be a map in $\AugOrdSet$. We associate to it the correspondence 
\[
H_{comb}(X) \rightarrow H_{comb}(f) \leftarrow H_{comb}(Y)
\]

\begin{Example}
The Hall algebra multiplication comes from the image of the map $\ord{2} \to \ord{1}$, and on the level of $H_{comb}$ this map goes to 
\[
\includegraphics[scale=0.5]{Figures/2MultComb.pdf}
\]
\end{Example}

%==================================

\flushleft{\textbf{Case $n=2$ - squares}}

Given a square in $\OrdSet$
\[
\stik{1}{
X \ar{r}{f} \ar{d}{g} \& Y \ar{d}{h}\\
Z \ar{r}{k} \& W
}
\]
we consider the map $X\xrightarrow{\alpha}W$ where $\alpha=h\circ f=k\circ g$ and we then construct the square of correspondences

\[
\stik{1}{
H_{comb}(X) \ar{r} \ar{d} \& H_{comb}(f) \ar{d}{}\& H_{comb}(Y) \ar{l} \ar{d} \\
H_{comb}(g) \ar{r} \& H_{comb}(\alpha) \& H_{comb}(h) \ar{l}{} \\
H_{comb}(Z) \ar{u} \ar{r} \& H_{comb}(k) \ar{u} \& H_{comb}(W) \ar{l} \ar{u}
}
\]

\begin{Example}
\label{Ex:AssociatorSquareHcomb}
The square\[
\stik{1}{
\ord{3} \ar{r}{p_1} \ar{d}{p_2} \& \ord{2} \ar{d} \\
\ord{2} \ar{r} \& \ord{1}
}
\]
with $p_1=(0\mapsto 0,1\mapsto 0,2\mapsto 1)$ and  $p_2=(0\mapsto 0,1\mapsto 1,2\mapsto 1)$ maps to
\[
\includegraphics[scale=0.8]{Figures/AssocInSset.pdf}
\]
\end{Example}

\subsubsection{Proof of \autoref{Prop:HCombCubesConstructionCommutes}}
\label{ProofOf:Prop:HCombCubesConstructionCommutes}
\begin{proof}
As $\sset$ is a regular category we only need to prove that all the squares in the cube commute where the maps are given by \autoref{Lem:GeneralCombConstruction}. The possible cases are covered by:

\flushleft{\textbf{Case 1:} Given maps}
\[
\stik{1}{
X\ar{r}{f}\& Y\ar{r}{g}\& Z\ar{r}{h} \& W
}
\]
we have a square of the form
\[
\stik{1}{
H_{comb}(g) \ar{r} \ar{d} \& H_{comb}(h\circ g) \ar{d}\\
H_{comb}(g\circ f) \ar{r} \& H_{comb}(h\circ g\circ f)
}
\]

 Moving horizontally essentially does nothing, and moving vertically is precomposing with $f$ in both cases, hence the square trivially commutes.
 
\flushleft{\textbf{Case 2:} Given a diagram of maps}
\[
\stik{1}{
S\ar{dr}{i}\\
{} \& X \ar{r}{f_1} \ar{d}{f_2} \& Y \ar{d}{g_1}\\
{} \& Z \ar{r}{g_2} \& W \ar{dr}{j}\\
{} \& {} \& {} \& T
}
\]
we have squares of the form
\[
\stik{1}{
H_{comb}(i) \ar{r} \ar{d} \& H_{comb}(f_1\circ i) \ar{d}\\
H_{comb}(f_2\circ i) \ar{r} \& H_{comb}(g_1\circ f_1\circ i = g_2\circ f_2\circ i)
}
\]
\[
\stik{1}{
H_{comb}(j) \ar{r} \ar{d} \& H_{comb}(j\circ g_1) \ar{d}\\
H_{comb}(j\circ g_2) \ar{r} \& H_{comb}(j\circ g_1\circ f_1 = j\circ g_2\circ f_2)
}
\]
whose commutativity is an immediate consequence of the commutativity of the diagram we started with.
\end{proof}

\subsection{Extension to \texorpdfstring{$\Spaces$}{Spc}}
\label{Hgeo}
Let $S_\bullet: \OrdSet^{op} \rightarrow \Spaces$ as in \autoref{def:SimplicialObjectInSpaces}. We note that it has a canonical extension to a functor $\sset^{op} \rightarrow \Spaces$:
\begin{Fact}
\label{S_ext}
The right Kan extension of $S_\bullet$ along the Yoneda embedding functor $\OrdSet^{op} \rightarrow \sset^{op}$ exists because the category $\Spaces$ is complete.
\end{Fact}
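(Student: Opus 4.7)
The plan is to invoke the standard $(\infty,1)$-categorical pointwise formula for right Kan extensions and verify that the relevant indexing diagrams are small. Writing $y: \OrdSet \hookrightarrow \sset$ for the Yoneda embedding, the right Kan extension of $S_\bullet$ along $y^{op}: \OrdSet^{op} \to \sset^{op}$ is computed pointwise at a simplicial set $K$ by
\[
(\operatorname{Ran}_{y^{op}} S_\bullet)(K^{op}) \; = \; \lim_{(y^{op} \downarrow K^{op})} S_\bullet \circ \pi,
\]
which, after dualizing the comma category, is equivalently the limit of $S_\bullet$ indexed by the opposite of the category of simplices $(\OrdSet_{/K})^{op}$.

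First I would verify smallness: the category $\OrdSet_{/K}$ has objects the pairs $(\ordop{n}, \sigma: \Delta^n \to K)$, and for each $n \geq 0$ the set of such $\sigma$ is just $K_n$, which is a set; since $\OrdSet$ itself is small, the whole indexing category is small. Consequently the limit above is a small limit in $\Spaces$, and it exists by the standing assumption that $\Spaces$ is complete. Functoriality of $K \mapsto (\operatorname{Ran}_{y^{op}} S_\bullet)(K^{op})$ is then automatic from the functoriality of limits applied to the pullback functor $\OrdSet_{/K} \to \OrdSet_{/K'}$ induced by any map $K \to K'$.

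Finally I would invoke the standard result that whenever a pointwise right Kan extension exists it is in fact the right Kan extension (satisfying the universal property against functors $\sset^{op} \to \Spaces$). Because $y^{op}$ is a fully faithful inclusion of a small $(\infty,1)$-category, the canonical map $S_\bullet \to (\operatorname{Ran}_{y^{op}} S_\bullet) \circ y^{op}$ is moreover an equivalence, which justifies the notational abuse of denoting the extension again by $S_\bullet$. There is no genuine obstacle here: the argument is purely a matter of unpacking the pointwise formula and checking that the diagrams involved remain within the small-limit regime where completeness of $\Spaces$ applies.
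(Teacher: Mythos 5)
Your proposal is correct and matches the paper's justification: the paper states this as a Fact with no further argument beyond "because $\Spaces$ is complete," and your write-up simply unpacks that via the standard pointwise formula, the smallness of the category of simplices $\OrdSet_{/K}$, and full faithfulness of the Yoneda embedding. Nothing is missing.
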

We will denote this extension also by $S_\bullet$. 

Now for every commutative cube in $\AugOrdSet$ we can associate a cube of correspondences in $\Spaces$ by composing $S_\bullet$ with $H_{comb}$. 

\begin{Proposition}
\label{CorrMonoidal}
$\Corr_k(S)\circ H_{comb}$ sends ordered disjoint unions of cubes in $\AugOrdSet$ to Cartesian products of cubes in $\Corr_k(\Spaces)$.
\end{Proposition}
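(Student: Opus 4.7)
The plan is to establish the key combinatorial fact that $H_{comb}$ sends ordered disjoint unions of morphisms in $\AugOrdSet$ to wedges (i.e.\ pushouts over $\ordop{0}$) of simplicial sets, and then to apply $S_\bullet$. Being a right Kan extension, $S_\bullet$ preserves limits; since by assumption $S(\ordop{0})$ is the terminal object in $\Spaces$, such wedges — viewed as pullbacks over $\ordop{0}$ in $\sset^{op}$ — are sent to genuine Cartesian products in $\Spaces$, and this handles all the $k$-faces of the cube at once.

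The morphism-level step is the combinatorial core. For $X = X_1 \sqcup X_2$ there are two order-preserving inclusions $i_j: \aug{X_j} \hookrightarrow \aug{X}$ — the map $i_1$ extending a cut of $X_1$ by ``$1$ on $X_2$'' and $i_2$ extending a cut of $X_2$ by ``$0$ on $X_1$'' — which agree only on a single vertex (the ``intermediate'' cut that is $0$ on $X_1$ and $1$ on $X_2$); this vertex is the image of the bottom of $\aug{X_1}$ and of the top of $\aug{X_2}$. For $f = f_1 \sqcup f_2: X_1 \sqcup X_2 \to Y_1 \sqcup Y_2$, every fiber $f^{-1}(y)$ is contained entirely in $X_1$ or entirely in $X_2$, hence $i_!(\aug{f^{-1}(y)}) \subseteq i_j(\aug{X_j})$ for the appropriate $j$. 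Since the extreme fibers of $f_j$ generate the top and bottom cuts of $X_j$ inside $H_{comb}(f_j)$ by \autoref{Cons:AugmentationExtension}, one obtains
\[
H_{comb}(f_1 \sqcup f_2) \;=\; H_{comb}(f_1) \sqcup_{\ordop{0}} H_{comb}(f_2),
\]
a pushout in $\sset$, equivalently a pullback over $\ordop{0}$ in $\sset^{op}$.

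Next, this wedge splitting extends throughout the cube. The center of any $k$-face of $H_{comb}(C_1 \sqcup C_2)$ is $H_{comb}$ applied to the composition of a path of maps in $C_1 \sqcup C_2$, which remains an ordered disjoint union of the corresponding compositions in $C_1$ and $C_2$; so the morphism-level argument applies verbatim. A routine check shows that both kinds of structure maps from \autoref{Lem:GeneralCombConstruction} — the ``collapse'' of Case 1 and the ``restriction'' of Case 2 — act independently on the $X_1$- and $X_2$-halves and therefore respect the wedge identification. Applying $\Corr_k(S)$ pointwise converts each wedge pullback into a Cartesian product in $\Spaces$, naturally in the cube, yielding the desired Cartesian product of $\Corr_k(S) \circ H_{comb}(C_1)$ with $\Corr_k(S) \circ H_{comb}(C_2)$.

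The main technical obstacle is verifying propagation from individual morphisms to compositions: one must confirm that no generating simplex of $H_{comb}$ at a higher face escapes into the ``extra'' fillers present in $\aug{-}$ beyond the wedge $\aug{X_1} \vee \aug{X_2}$. This amounts to showing that fibers of compositions of $\sqcup$-maps remain contained in a single component $X_j$, which follows inductively since each individual map preserves the splitting. Once this is secured, the naturality of the $i_j$'s under the Case 1 and Case 2 maps delivers the compatibility with cube structure mechanically, and the final translation into Cartesian products is immediate from limit-preservation of $S_\bullet$ and $S(\ordop{0}) = \mathrm{pt}$.
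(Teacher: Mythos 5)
Your proof is correct and follows essentially the same route as the paper's: show that $H_{comb}$ takes ordered disjoint unions to wedges over $\ordop{0}$ in $\sset$, then use that the right Kan extension $S$ converts these colimits into limits over the terminal object $S_0$, hence into Cartesian products. The paper's own proof simply asserts the combinatorial wedge decomposition as "clear," whereas you spell it out (the two embeddings meeting in a single cut, fibers of $f_1\sqcup f_2$ lying in one component) and verify its propagation to all faces of the cube — a more detailed account of the same argument.
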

\begin{proof}
Being a right Kan extension, $S$ sends limits in $\sset^{op}$ (i.e. colimits of simplicial sets) to limits in $\Spaces$.
It is clear from \autoref{Def:HCombOfGenerealMaps} that $H_{comb}$ sends the ordered disjoint union of ordered sets to a colimit of simplicial sets over a point, and then $S$ (being a Kan extension) sends it to a limit over $S$ applied to a point, which is the final object of $\Spaces$ by assumption. In all $\Corr_k(S)\circ H_{comb}:\AugOrdSet \rightarrow \Corr_k{\Spaces}$ sends products to products and we are finished.
\end{proof}

\begin{Remark}
The assumption that $S_0$ is the final object of $\Spaces$ forces us into the situation of an \emph{algebra} object, or a category with a single object. Without this assumption we arrive to the situation of an $\Ainfinity$ category object.
\end{Remark}

\section{\texorpdfstring{$\Ainfinity$}{A infinity}-algebras and higher Segal conditions}

\label{sec:HigherSegal}

$d$-Segal conditions were introduced in \cite[\S 2.3]{KapranovDyckerhoff}. For the definition and some basic results we will use various technical results about polytopes. A good source is \cite{zieglerPolytopes}.

Let us recall here the notion of a $d$-Segal object as outlined in \cite{KapranovDyckerhoff} and described in detail in \cite{PoguntkeSegal}. We start by recalling the $d=2$ case.

\subsection{2-Segal Conditions}
Let $S$ be a simplicial object in $\CCC$, i.e. a functor $S:\OrdSet^{op}\to\CCC$. We denote also by $S$ the right Kan extension of $S$ to $\sset^{op}$. In particular note that this extended $S$ takes colimits to limits.

Let $\mathcal{T}$ be a triangulation of an $n+1$-gon into triangles $T_1,\ldots,T_k$. e.g.
\[
%% Creator: Inkscape inkscape 0.91, www.inkscape.org
%% PDF/EPS/PS + LaTeX output extension by Johan Engelen, 2010
%% Accompanies image file 'PolygonalTriangulation.pdf' (pdf, eps, ps)
%%
%% To include the image in your LaTeX document, write
%%   \input{<filename>.pdf_tex}
%%  instead of
%%   \includegraphics{<filename>.pdf}
%% To scale the image, write
%%   \def\svgwidth{<desired width>}
%%   \input{<filename>.pdf_tex}
%%  instead of
%%   \includegraphics[width=<desired width>]{<filename>.pdf}
%%
%% Images with a different path to the parent latex file can
%% be accessed with the `import' package (which may need to be
%% installed) using
%%   \usepackage{import}
%% in the preamble, and then including the image with
%%   \import{<path to file>}{<filename>.pdf_tex}
%% Alternatively, one can specify
%%   \graphicspath{{<path to file>/}}
%% 
%% For more information, please see info/svg-inkscape on CTAN:
%%   http://tug.ctan.org/tex-archive/info/svg-inkscape
%%
\begingroup%
  \makeatletter%
  \providecommand\color[2][]{%
    \errmessage{(Inkscape) Color is used for the text in Inkscape, but the package 'color.sty' is not loaded}%
    \renewcommand\color[2][]{}%
  }%
  \providecommand\transparent[1]{%
    \errmessage{(Inkscape) Transparency is used (non-zero) for the text in Inkscape, but the package 'transparent.sty' is not loaded}%
    \renewcommand\transparent[1]{}%
  }%
  \providecommand\rotatebox[2]{#2}%
  \ifx\svgwidth\undefined%
    \setlength{\unitlength}{189.7157636bp}%
    \ifx\svgscale\undefined%
      \relax%
    \else%
      \setlength{\unitlength}{\unitlength * \real{\svgscale}}%
    \fi%
  \else%
    \setlength{\unitlength}{\svgwidth}%
  \fi%
  \global\let\svgwidth\undefined%
  \global\let\svgscale\undefined%
  \makeatother%
  \begin{picture}(1,0.79858246)%
    \put(0,0){\includegraphics[width=\unitlength,page=1]{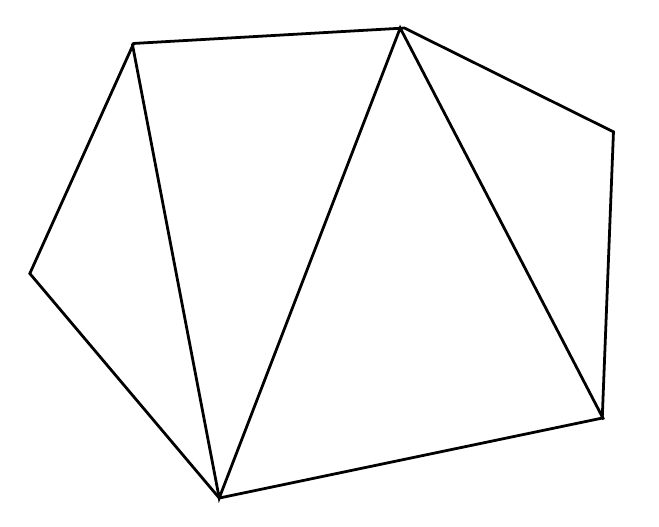}}%
    \put(0.17094233,0.74538743){\color[rgb]{0,0,0}\makebox(0,0)[lb]{\smash{$0$}}}%
    \put(0.60988878,0.76654437){\color[rgb]{0,0,0}\makebox(0,0)[lb]{\smash{$1$}}}%
    \put(0.92297273,0.6131971){\color[rgb]{0,0,0}\makebox(0,0)[lb]{\smash{$2$}}}%
    \put(0.30745386,0.00619754){\color[rgb]{0,0,0}\makebox(0,0)[lb]{\smash{$4$}}}%
    \put(-0.0035003,0.37465695){\color[rgb]{0,0,0}\makebox(0,0)[lb]{\smash{$5$}}}%
    \put(0.91871311,0.12972726){\color[rgb]{0,0,0}\makebox(0,0)[lb]{\smash{$3$}}}%
    \put(0.10065815,0.38118768){\color[rgb]{0,0,0}\makebox(0,0)[lb]{\smash{$T_1$}}}%
    \put(0.30953915,0.52774804){\color[rgb]{0,0,0}\makebox(0,0)[lb]{\smash{$T_2$}}}%
    \put(0.50122319,0.28920785){\color[rgb]{0,0,0}\makebox(0,0)[lb]{\smash{$T_3$}}}%
    \put(0.73550375,0.55756556){\color[rgb]{0,0,0}\makebox(0,0)[lb]{\smash{$T_4$}}}%
  \end{picture}%
\endgroup%

\]
By wrapping the polygon onto the $n$-simplex $\ordop{n}$ (matching up the vertices) this defines a map of simplicial sets \[
{T_1}\coprOver{{T_1\cap T_2}} {T_2}\coprOver{{T_2\cap T_3}} {T_3}\sqcup\ldots \coprOver{{T_{k-1}\cap T_k}} {T_k} \to \ordop{n}
\]
and by (contravariant) functoriality of $S$ defines a map
\[S_{\ordop{n}} \xrightarrow{\alpha_{{}_\mathcal{T}}} S_{T_1}\prodOver{S_{{}_{T_1\cap T_2}}} S_{T_2}\prodOver{S_{{}_{T_2\cap T_3}}} S_{T_3}\times\ldots \prodOver{S_{{}_{T_{k-1}\cap T_k}}} S_{T_k}\] 

\begin{Definition}
\label{2SegalAdaptednessDef}
We say that $S$ is \emph{adapted} to a polygonal triangulation $\mathcal{T}$ if the map $\alpha_{{}_\mathcal{T}}$ is an isomorphism, i.e. if it presents $S_{\ordop{n}}$ as the limit of the corresponding diagram.
\end{Definition}

\begin{Definition}
A functor $S:\OrdSet^{op}\to\CCC$ is said to be a 2-Segal object if $S$ is adapted to any polygonal triangulation $\mathcal{T}$ of an $n$-gon for $n\geq 4$.
\end{Definition}

\subsection{Higher Segal conditions}

Let $S$ be a simplicial object in $\CCC$, i.e. a functor $\OrdSet^{op}\to\CCC$, and let $d\geq 2$ be an integer.

Just as the 2-Segal conditions arise from triangulations of polygons, so the higher Segal conditions arise from higher dimensional triangulations of higher dimensional polytopes, as follows:

Let $C_d(n)$ be the $d$-dimensional cyclic polytope on $n+1$ vertices. It is the unique (up to isomorphism) $d$-dimensional polytope on $n+1$ vertices where no $d+1$ vertices are $d-1$-colinear. It will be convenient to give an explicit model, so consider the \emph{moment curve} in $\RR^d$: \[
\gamma_d:t\mapsto (t,t^2,\ldots,t^d)
\]
and for a set $S=\{s_1,\ldots,s_n\}\subset \RR$ define $C_d(S)$ to be the convex envelope of the points $\gamma_d(s_i)$. In particular let $C_d(n):=C_d(\{0,\ldots,n\})$.

An important feature of cyclic polytopes is the following:

\begin{Fact}
\label{Lem:CyclicPolytopes}
\begin{enumerate}
    \item We have projections $C_d(n)\xrightarrow{p_d^n} C_{d-1}(n)$ (in the above model the projections are given by omitting the last coordinate).
    \item For any map of ordered sets $I\xrightarrow{\varphi}J$ we have an induced map $C_d(I)\xrightarrow{\varphi_d} C_d(J)$.
    \item When $\varphi$ is injective the square
    \[
    \stik{1}{
    C_d(I)\ar{r}{\varphi_d} \ar{d}{p_d^n} \& C_d(J) \ar{d}{p_d^{n+1}}\\
    C_{d-1}(I)\ar{r}{\varphi_d}  \& C_{d-1}(J)
    }
    \]
    is a pullback.
\end{enumerate}
\end{Fact}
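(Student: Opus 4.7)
The plan is to verify each of the three items using the explicit moment-curve model, with the three items becoming progressively more combinatorial. I expect (1) to be essentially a triviality, (2) to be a functoriality statement requiring one to specify the target category, and (3) to be the substantive content.

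For (1), I would simply observe that the coordinate projection $\pi: \RR^d \to \RR^{d-1}$ forgetting the last coordinate satisfies $\pi \circ \gamma_d = \gamma_{d-1}$. Since $C_d(n)$ is a convex hull and $\pi$ is linear, $\pi(C_d(n)) = C_{d-1}(n)$, and the restriction of $\pi$ is the desired map $p_d^n$.

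For (2), given a monotone $\varphi: I \to J$, define $\varphi_d$ on vertices by $\gamma_d(i) \mapsto \gamma_d(\varphi(i))$. To extend this to a map between the polytopes (or, more precisely, between the simplicial sets obtained by equipping each $C_d(\cdot)$ with its canonical upper or lower triangulation), I would factor $\varphi$ as an injection followed by a surjection. An injection gives an inclusion of convex hulls at the vertex level, which extends to an inclusion of subcomplexes because the canonical triangulation of $C_d(I)$ is determined combinatorially by the linear order via Gale's evenness criterion and is preserved by order-preserving injections. A monotone surjection corresponds to collapsing adjacent vertices, which is realized as a composition of simplicial degeneracies on the canonical triangulation. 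Composing these gives $\varphi_d$.

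For (3), the pullback statement, my approach is as follows. One direction, the existence of the canonical map from $C_d(I)$ to the pullback, is immediate from (1) and (2) since all squares in sight commute on vertex sets. For the pullback property, it suffices, at the level of the canonical triangulation, to show that a simplex $\sigma$ of $C_d(J)$ whose projection under $p_d^n$ lies inside the subcomplex $C_{d-1}(I) \subset C_{d-1}(J)$ is itself a simplex of $C_d(I)$. Here one uses that the vertices of any simplex in the upper (resp.\ lower) triangulation of $C_d(J)$ are distinct points of the vertex set of $C_d(J)$, and that a convex combination of $\gamma_{d-1}(j)$'s lies in $C_{d-1}(I)$ only when every contributing vertex has $j \in I$ (since $\varphi$ is injective, the vertices of $C_{d-1}(I)$ are affinely independent among those of $C_{d-1}(J)$ up to the linear relations implied by $d-1 < d$). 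This forces the vertex set of $\sigma$ to be contained in $I$ and hence $\sigma \in C_d(I)$.

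The main obstacle will be part (3): giving a clean combinatorial verification that the simplices of $C_d(J)$ whose projection lies in $C_{d-1}(I)$ correspond exactly to simplices of $C_d(I)$. This is essentially a standard fact about cyclic polytopes (the projection $p_d^n$ is a ``staircase'' projection compatible with the canonical triangulations), but it requires a careful invocation of Gale's evenness criterion and of the fact that distinct points on the moment curve are in sufficiently general position. Once this is done, the claimed pullback follows, since the pullback object is by definition the set of simplices of $C_d(J)$ mapping to $C_{d-1}(I)$.
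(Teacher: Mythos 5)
The paper does not actually prove this statement: it is recorded as a ``Fact'', with the surrounding text deferring to the standard literature on cyclic polytopes (\cite{zieglerPolytopes}, \cite{PoguntkeSegal}). So there is no official argument to compare against, and your proposal has to stand on its own. It does not: two of your key intermediate claims are false, and part (3) is not even true in the naive category in which you are trying to prove it, so the first task is to pin down where the square is supposed to live.

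Concretely: in part (2) you assert that the canonical (Gale) triangulation ``is preserved by order-preserving injections''. It is not: evenness of $T\subseteq I$ is a condition quantified over $I\setminus T$, and enlarging $I$ to $J$ adds new elements to test against. For instance $\{1,3,4\}$ is even in $\{1,2,3,4\}$ but odd in $\{0,1,2,3,4\}$ (because of $j=0$), so the lower triangulation of $C_2(\{1,2,3,4\})$ is not a subcomplex of the lower triangulation of $C_2(\{0,\ldots,4\})$. In part (3) you assert that a convex combination of the $\gamma_{d-1}(j)$ lies in $C_{d-1}(I)$ only if every contributing vertex lies in $I$; but $C_{d-1}(I)$ is a full-dimensional convex body in $\RR^{d-1}$ once $|I|\ge d$, and the points $\gamma_{d-1}(j)$, $j\in J$, are affinely dependent as soon as $|J|>d$, so this fails (e.g.\ $\tfrac12\gamma_2(0)+\tfrac12\gamma_2(2)=(1,2)$ lies in the triangle $C_2(\{0,1,3\})$ inside $C_2(\{0,1,2,3\})$). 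Worse, the square in (3) read literally --- polytopes as convex bodies, $p_d$ the coordinate projection, $\varphi_d$ the inclusion of convex hulls --- is simply not a pullback: take $d=2$ and $I=\{0,1,3\}\subset J=\{0,1,2,3\}$; then $C_1(I)=C_1(J)=[0,3]$, so the fibre product is the whole quadrilateral $C_2(J)$, whereas $C_2(I)$ is the proper sub-triangle $\mathrm{conv}\{(0,0),(1,1),(3,9)\}$. Nor can you retreat to ``the set of simplices of $C_d(J)$ mapping to $C_{d-1}(I)$'': $p_d$ drops dimension and is not a simplicial map for any triangulations by nondegenerate simplices, so that description of the pullback object does not type-check either. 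The statement only becomes correct after one fixes the combinatorial/simplicial interpretation actually used in \cite{PoguntkeSegal} (where the objects are the simplicial subsets of $\Delta^J$ spanned by the relevant triangulations and the maps are the induced ones); identifying that interpretation, and then verifying (3) there, is the real content, and it is missing from your write-up.
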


Via \autoref{Lem:CyclicPolytopes} we can see that covers of cyclic polytopes by other cyclic polytopes are directly related to systems of maps of ordered sets. We wish to single out two types of covers called the \emph{upper} and \emph{lower} triangulations.
For the purposes of our proofs we will only need a combinatorial description of these triangulations given below, but let us first remind of the geometric description:
\begin{Definition}
A point $x$ of the boundary of $C_d(n)$ is called \emph{upper} (resp. \emph{lower}) if $x+\RR_{>0}\cap C_d(n)=\emptyset$ (resp. $x-\RR_{>0}\cap C_d(n)=\emptyset$).
\end{Definition}

Note that the projection $C_d(n+1)\to C_d(n)$ sends the upper (resp. lower) part of the boundary onto $C_d(n)$ and therefore defines a triangulation of $C_d(n)$.

\begin{Definition}
The triangulation of $C_d(n)$ induced by the upper (resp. lower) part of the boundary of $C_d(n+1)$ is called the upper (resp. lower) triangulation of $C_d(n)$, and is denoted $\upperSeg{n,d}$ (resp. $\lowerSeg{n,d}$).
\end{Definition}

The following gives a purely combinatorial description of these triangulations due to \cite{galePolytopes}.

\begin{Definition}
An inclusion of ordered sets $I\subset J$ is called \emph{even} (resp. \emph{odd}) if for any $j\not\in I$ there are an even (resp. odd) number of elements $i\in I$ greater than $j$.
\end{Definition}

\begin{Proposition}[\cite{zieglerPolytopes} Theorem 0.7]
\label{Prop:loweruppersegal}
The even (resp. odd) subsets of size $d+1$ in $\{0,1,\ldots,n\}$ give the lower (resp. upper) triangulation of $C_d(n)$ by considering the corresponding imbeddings $\ordop{d}\cong C_d(d)\hookrightarrow C_d(n)$.
\end{Proposition}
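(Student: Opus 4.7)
My plan is to deduce this combinatorial characterization from Gale's evenness criterion for facets of cyclic polytopes, via a direct Vandermonde computation. Let $\pi \colon C_{d+1}(n) \to C_d(n)$ denote the projection forgetting the last coordinate; this is well defined by \autoref{Lem:CyclicPolytopes} and restricts to a bijection from the lower (resp.\ upper) part of the boundary of $C_{d+1}(n)$ onto $C_d(n)$. Consequently $\lowerSeg{n,d}$ (resp.\ $\upperSeg{n,d}$) is the image under $\pi$ of the collection of $d$-dimensional lower (resp.\ upper) facets of $C_{d+1}(n)$. Any such facet is indexed by a $(d+1)$-subset $I \subseteq \{0,\ldots,n\}$, and by \autoref{Lem:CyclicPolytopes}(3) its image under $\pi$ coincides with the embedded $d$-simplex $C_d(I) \hookrightarrow C_d(n)$. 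Thus it suffices to show that $I$ indexes a lower (resp.\ upper) facet of $C_{d+1}(n)$ precisely when $I$ is \emph{even} (resp.\ \emph{odd}) in the sense of the excerpt.

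The next step is to establish Gale's evenness theorem in this setting. The affine hyperplane $H_I$ spanned by $\{\gamma_{d+1}(i) : i \in I\}$ is cut out by the vanishing of the $(d+2) \times (d+2)$ determinant whose rows are $(1,\gamma_{d+1}(i))$ for $i \in I$ together with the variable row $(1,x_1,\ldots,x_{d+1})$. Substituting $v = \gamma_{d+1}(j)$ for $j \notin I$ turns this into a Vandermonde determinant in $d+2$ nodes; sorting the rows into their natural order contributes a sign of $(-1)^{\#\{i \in I : i > j\}}$, while the Vandermonde itself is strictly positive. Hence $I$ spans a supporting hyperplane of $C_{d+1}(n)$---i.e.\ a facet---if and only if the parity of $\#\{i \in I : i > j\}$ is constant over $j \notin I$, which is exactly the condition that $I$ is either even or odd in the terminology introduced before the statement.

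Finally, to distinguish upper from lower facets I would expand the same determinant along the variable row and read off the coefficient of $x_{d+1}$: it is a $(d+1) \times (d+1)$ Vandermonde carrying a universal sign $(-1)^{d+1}$, so the last component of the outward normal to $H_I$ differs in sign from $(-1)^{\#\{i \in I : i > j\}}$ only by a factor depending on $d$. Pinning this factor down on a small model case---say $d=1$, $n=3$, where the upper and lower triangulations of $C_1(3)$ are visible by hand---identifies the even subsets with the lower facets and the odd subsets with the upper facets, completing the proof. The main obstacle is precisely this last piece of sign bookkeeping: the Vandermonde computation of the second step carries essentially all the content, but linking its sign with the geometric upper/lower dichotomy requires fixing orientation conventions carefully. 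Once that is done, everything else is a direct unpacking of Gale's theorem.
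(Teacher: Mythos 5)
The paper offers no proof of this proposition --- it is quoted from Ziegler's Theorem 0.7 and attributed to Gale --- so there is no internal argument to compare against. What you have written is essentially the standard proof of Gale's evenness criterion together with its upper/lower refinement, and the structure is right: reduce to identifying the lower (resp.\ upper) facets of $C_{d+1}(n)$ via the projection, detect which $(d+1)$-subsets $I$ span facets by evaluating the determinant $F_I(x)$ (rows $(1,\gamma_{d+1}(i))$ for $i\in I$ plus the variable row) at the remaining vertices, and read off the parity from the Vandermonde after sorting rows. The transposition count $\#\{i\in I: i>j\}$ and the positivity of the sorted Vandermonde are both correct, and this correctly yields that $I$ spans a facet iff it is even or odd in the paper's sense.

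The step that needs repair is the final sign bookkeeping, and the error is not merely cosmetic. The coefficient of $x_{d+1}$ in $F_I$ is the cofactor of the bottom-right entry of a $(d+2)\times(d+2)$ matrix, so it carries the sign $(-1)^{(d+2)+(d+2)}=+1$, not $(-1)^{d+1}$; the coefficient is therefore the strictly positive Vandermonde $\prod_{k<l}(i_l-i_k)$ for every $d$. Hence $F_I$ increases in the $x_{d+1}$ direction, and the even condition ($F_I(\gamma_{d+1}(j))>0$ for all $j\notin I$) says exactly that the polytope lies above the facet, i.e.\ the facet is lower in the sense of the paper's definition. This closes the argument uniformly in $d$ with no calibration step. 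As written, your plan is not self-consistent: if the sign genuinely were $(-1)^{d+1}$ it would alternate with $d$, the even subsets would index the \emph{upper} facets for even $d$ (contradicting the statement), and a single check at $d=1$ cannot pin down a factor that actually depends on $d$. Once the cofactor is computed correctly, the $d=1$, $n=3$ example (lower hull of four points on a parabola projecting to $[0,1]\cup[1,2]\cup[2,3]$, upper hull to $[0,3]$) becomes a sanity check rather than a load-bearing step.
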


As in the 2-Segal case, we can ask whether $S$ is adapted to these triangulations. Namely we can wrap $C_d(n)$ onto $\ordop{n}\cong C_n(n)$ and ask whether the natural maps from $S_{\ordop{n}}$ to the product induced by the triangulation is an isomorphism.

\begin{Definition}
\label{Def:HigherSegalObject}
A simplicial object $S$ is called
\begin{enumerate}
    \item  upper $d$-Segal if it is adapted to $\upperSeg{n,d}$ for all $n\geq d$.
    \item  lower $d$-Segal if it is adapted to $\lowerSeg{n,d}$ for all $n\geq d$.
    \item  (fully) $d$-Segal if it is both upper and lower $d$-Segal.
\end{enumerate}
\end{Definition}

This generalizes the $d=2$ case:

\begin{Proposition}[cf. \cite{PoguntkeSegal} Proposition 2.5]
\label{PropPog:SegalTriangulations}
$S$ is adapted to to $\lowerSeg{n,d}$ and $\upperSeg{n,d}$ iff $S$ is adapted to any triangulation of $C_d(n)$.
\end{Proposition}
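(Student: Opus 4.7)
The plan is to establish only the non-trivial "only if" direction, since the converse is immediate from the definition. I proceed by induction on $n \geq d$. The base case $n = d$ is vacuous, since $C_d(d) \cong \ordop{d}$ admits a unique triangulation (itself), for which $\alpha_\mathcal{T}$ is the identity. Assume the result for all $n' < n$, and let $\mathcal{T}$ be an arbitrary triangulation of $C_d(n)$.

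The main combinatorial input I plan to use is the theorem of Rambau that the flip graph of triangulations of a cyclic polytope is connected: any two triangulations of $C_d(n)$ are joined by a finite sequence of bistellar flips. Each flip is supported on a vertex subset $V \subseteq \{0, \ldots, n\}$ of size $m+1 \leq d+2$; inside $C_d(V)$ the before and after triangulations are precisely $\lowerSeg{m,d}$ and $\upperSeg{m,d}$ (in some order), while outside $C_d(V)$ the two triangulations agree. Thanks to this connectedness it suffices to prove a flip lemma: adaptedness is preserved under a single flip.

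For the flip lemma I would write $\mathcal{T}_1$ and $\mathcal{T}_2$ for the before and after triangulations, and decompose each as a pushout in $\sset$ of a common "outside" complex $\mathcal{O}$ (supported on $C_d(n) \setminus \operatorname{int} C_d(V)$) glued along $\partial C_d(V)$ to either $\lowerSeg{m,d}$ or $\upperSeg{m,d}$. Applying $S$ converts these pushouts into pullbacks in $\Spaces$. By the inductive hypothesis when $m < n$, and by the assumed upper and lower $d$-Segal conditions at level $n$ when $m = n$, the simplicial object $S$ sends both local triangulations $\lowerSeg{m,d}$ and $\upperSeg{m,d}$ of $C_d(V)$ to the same limit, canonically identified with $S_{C_d(V)}$. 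A routine pasting of pullbacks then shows that the limits of $S$ over $\mathcal{T}_1$ and over $\mathcal{T}_2$ are canonically equivalent, and the comparison map from $S_{\ordop{n}}$ to one factors through the comparison map to the other. Thus $\alpha_{\mathcal{T}_1}$ is an equivalence iff $\alpha_{\mathcal{T}_2}$ is, completing the inductive step once we know $S$ is adapted to, say, $\upperSeg{n,d}$.

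The main obstacle I anticipate is precisely the bookkeeping inside the flip lemma: one must verify that $\partial C_d(V)$ lies inside both triangulations as a common subcomplex, and that under the wrapping map $C_d(n) \to \ordop{n}$ it glues the "inside" and "outside" pieces compatibly in $\sset$, so that the pushout decomposition of $\ordop{n}$ is genuine and not merely up to homotopy. This amounts to a careful analysis of which sub-cyclic-polytopes of $C_d(V)$ appear on its boundary (each of which is of strictly smaller dimension or vertex count, so covered by the inductive hypothesis). Once this geometric setup is recorded cleanly, the remainder of the argument is a formal manipulation with commuting pullbacks, and the induction closes.
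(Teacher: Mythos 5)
The paper offers no proof of this statement --- it is quoted from \cite{PoguntkeSegal} --- so there is no internal argument to compare against. Your overall strategy (connectivity of the flip graph of triangulations of $C_d(n)$, i.e.\ Rambau's boundedness of the higher Stasheff--Tamari poset, plus a local flip lemma) is the standard route and is essentially how the cited reference proceeds, so the architecture is sound. One small imprecision: since the vertices lie on the moment curve every circuit has exactly $d+2$ elements, so a bistellar flip is always supported on $m+1=d+2$ vertices and the two local triangulations are exactly $\lowerSeg{d+1,d}$ and $\upperSeg{d+1,d}$ on the vertex set $V$; in particular your case $m=n$ occurs only when $n=d+1$.

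The genuine gap is in the justification of the flip lemma. To see that one flip preserves adaptedness you must identify the limits of $S$ over the two local triangulations of $C_d(V)$, $|V|=d+2$, with $S$ of the $(d+1)$-simplex on $V$; that is, you need $S$ to be adapted to $\lowerSeg{d+1,d}$ and $\upperSeg{d+1,d}$ transported along $V\hookrightarrow\{0,\ldots,n\}$. You claim this follows ``by the inductive hypothesis when $m<n$'', but your inductive hypothesis is the implication ``adapted to $\lowerSeg{m,d}$ and $\upperSeg{m,d}$ $\Rightarrow$ adapted to all triangulations of $C_d(m)$'': its conclusion is not what you need, and its premise is precisely the missing input, which is not supplied by adaptedness to $\lowerSeg{n,d}$ and $\upperSeg{n,d}$ for the single value $n$ (those conditions say nothing about the circuit conditions on proper vertex subsets --- already for $d=2$, $n=4$ the two fan triangulations of the pentagon do not obviously control the inscribed quadrilaterals). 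So the induction does not close for $n>d+1$. The statement should be read, as it is in fact used in the proof of \autoref{Prop:DSegalBySteps} where adaptedness to $\lowerSeg{k,d},\upperSeg{k,d}$ is assumed for all $k\le n$, with the hypothesis quantified over all levels. Under that reading your argument goes through and simplifies: the local input for every flip comes directly from the level-$(d+1)$ hypothesis restricted to $V$, and one inducts on the number of flips joining $\mathcal{T}$ to $\lowerSeg{n,d}$ rather than on $n$.
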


We recall also the following:

\begin{Proposition}[\cite{PoguntkeSegal} Proposition 2.10]
\label{Prop:DSegalD+1Segal}
Let $S$ be a simplicial object in a category $\CCC$ which admits limits. Assume that $S$ is lower or upper $d$-Segal. Then S is fully $(d+1)$-Segal.
\end{Proposition}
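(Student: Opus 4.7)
The plan is to reduce to the combinatorial heart of the statement by a refinement-and-Fubini argument. Without loss of generality, assume $S$ is lower $d$-Segal; the upper case is obtained by dualizing (reversing the order on $\{0,\ldots,n\}$ exchanges even and odd subsets). Fix $n \geq d+1$ and a triangulation $\mathcal{T}$ of $C_{d+1}(n)$, either $\lowerSeg{n,d+1}$ or $\upperSeg{n,d+1}$; to show $S$ is adapted to $\mathcal{T}$ I would argue that the comparison map
\[
S_{\ordop{n}} \longrightarrow \lim_{\Delta_I \in \mathcal{T}} S(\Delta_I)
\]
is an equivalence, where the limit is taken over the diagram of top-dimensional simplices of $\mathcal{T}$ together with their intersections.

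The first step would be to refine the target. Each top-dimensional cell $\Delta_I \cong \ordop{d+1}$ of $\mathcal{T}$ is a $(d+1)$-simplex, and since $d+1 \geq d$ the lower $d$-Segal hypothesis applies to it, producing
\[
S(\Delta_I) \cong \lim_{J \in \lowerSeg{d+1,d}(I)} S(\Delta_J),
\]
where by \autoref{Prop:loweruppersegal} the set $\lowerSeg{d+1,d}(I)$ is identified with the even size-$(d+1)$ subsets of $I$. Substituting this into the outer limit and commuting limits (Fubini for limits in $\CCC$) would express $\lim_\mathcal{T} S$ as a limit over a refined diagram of $d$-simplices of $\ordop{n}$ and their intersections. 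Applying the same hypothesis directly to $\ordop{n}$ already gives $S_{\ordop{n}} \cong \lim S|_{\lowerSeg{n,d}}$. What remains is to identify these two limits.

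The core of the proof is then a combinatorial/geometric comparison between the refined diagram of the previous paragraph and the diagram $\lowerSeg{n,d}$. I would attempt this by producing a common refinement of the two indexing categories and invoking the pasting/cofinality properties of limits, using the pullback square of \autoref{Lem:CyclicPolytopes}(3) to control the interaction between the projections $C_{d+1}(I) \to C_d(I)$ as $I$ varies over subsets of $\{0,\ldots,n\}$. An inductive argument on $n$ seems natural: the base case $n=d+1$ is trivial because $\mathcal{T}$ consists of the single top cell $\Delta_{\{0,\ldots,d+1\}} = \ordop{n}$, and the step from $n$ to $n+1$ can be set up by peeling off one vertex and using the pullback property to glue.

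The main obstacle I expect is precisely this comparison step. The parity condition defining an "even subset" depends on the ambient ordered set, so a $d$-simplex $\Delta_J$ appearing as even-in-$I$ for some $I \in \mathcal{T}$ need not be even in $\{0,\ldots,n\}$; the refined diagram and $\lowerSeg{n,d}$ are genuinely different diagrams of $d$-simplices that must be shown to compute the same limit. Handling this requires a careful analysis of how the upper and lower boundaries of $C_{d+1}(n+1)$ decompose into $C_d$-cells under projection, which is where the geometric content of cyclic polytopes enters essentially; this is the step that would need the most care and where I would expect the proof to be most delicate.
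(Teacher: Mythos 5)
First, a caveat: the paper does not prove this statement at all --- it is imported verbatim from Poguntke (Proposition~2.10 of that reference) --- so your argument has to stand on its own rather than be measured against a proof in the text. Your overall strategy (decompose each top cell of a $(d+1)$-dimensional triangulation $\mathcal{T}$ via the lower $d$-Segal hypothesis, then compare the resulting diagram of $d$-simplices with $\lowerSeg{n,d}$) is the right one in spirit, and you have correctly located where the content lies. But the step you defer \emph{is} the theorem, and in the ``global refinement plus cofinality'' form you propose it does not go through: the projections to $C_d(n)$ of the lower triangulations of the various cells $\Delta_I\in\mathcal{T}$ overlap (distinct $(d+1)$-simplices of $\mathcal{T}$ lie over overlapping regions of $C_d(n)$, and the same $d$-simplex can occur as even-in-$I$ for several $I$ while other occurring $d$-simplices, such as the edges $\{0,j\}$ in the $d=1$ fan, belong to no triangulation of $C_d(n)$ at all). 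So the refined diagram is not a triangulation of anything, and there is no common refinement of it and $\lowerSeg{n,d}$ to run a cofinality argument on. The version that closes is the sequential one you mention only in passing: use the recursive structure of $\lowerSeg{n,d+1}$ and $\upperSeg{n,d+1}$ (their cells not containing the last vertex $n$ form $\lowerSeg{n-1,d+1}$, resp.\ $\upperSeg{n-1,d+1}$), split the limit over $\mathcal{T}$ along this decomposition, apply the lower $d$-Segal condition to each newly adjoined $(d+1)$-cell, and cancel the resulting extra faces against one further instance of the lower $d$-Segal condition for $\ordop{n}$ itself. None of that bookkeeping is supplied, and it is precisely that bookkeeping which constitutes the proof.

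Separately, your opening reduction is wrong as stated. Reversing the order on $\{0,\dots,n\}$ replaces the count $\#\{i\in I: i>j\}$ by $\#\{i\in I: i<j\}$, and since these sum to $|I|$, reversal exchanges even and odd subsets only when $|I|$ is odd. The hypothesis concerns subsets of size $d+1$, so order reversal interchanges lower and upper $d$-Segal only when $d$ is even. For odd $d$ the two conditions are each preserved by reversal and are genuinely inequivalent --- already for $d=1$, lower $1$-Segal is the ordinary Segal condition while upper $1$-Segal is the far stronger requirement $S_{\ordop{n}}\simeq S_{\{0,n\}}$ --- so the case ``$S$ upper $d$-Segal'' cannot be dispatched by symmetry and needs its own parallel argument (as the paper itself does elsewhere, e.g.\ in the proof of \autoref{thm:lowerupperSegal}, by swapping the roles of even and odd subsets rather than by reversing the order).
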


\subsection{Higher Segal conditions and lax associativity}

\subsubsection{Main theorem}
In \autoref{sec:AssociativityCubes} we defined a system of cubes $A_n$ in $\AugOrdSet$ called the "associator cubes". Applying the construction of \autoref{sec:CombinatorialHall} to these cubes we obtain a system $H_n:=H_{comb}(A_n)$ of cubes of correspondences in $\sset^{op}$.

\begin{Theorem}
\label{thm:HigherSegalCubes}
Let $S$ be a simplicial object, i.e. a functor $\OrdSet^{op}\rightarrow \Spaces$, where $\Spaces$ is a complete $(\infty,1)$-category and assume that $S(\ordop{0})$ is the final object of $\Spaces$. Denote also by $S$ its right Kan extension along the functor $\OrdSet^{op}\rightarrow \sset^{op}$.
Let $d\geq 2$, then $S$ is $d$-Segal if and only if it sends every $H_n, n\geq d$ to an invertible cube in $\CORR_n(\Spaces)$.
\end{Theorem}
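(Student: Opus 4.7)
The plan is to decouple the bi-implication along the upper/lower dichotomy. By \autoref{invertiblecubes}, a cube of correspondences in $\Spaces$ is invertible iff both of its distinguished subcubes $L_n$ and $U_n$ are pullback cubes, and by \autoref{Def:HigherSegalObject}, $S$ is $d$-Segal iff it is both lower and upper $d$-Segal. Thus the theorem reduces to proving its lower half in isolation: $S$ is lower $d$-Segal iff for every $n\geq d$ the subcube $L_n(H_n)$ is a pullback cube in $\Spaces$, with the upper statement then obtained by the mirror argument swapping the roles of the two parities. This half is the content of \autoref{thm:lowerupperSegal} referred to in the Introduction.

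To analyse $L_n(H_n)$, I first unpack its vertices and morphisms. The $n$-cube $H_n=H_{comb}(A_n)$ has, at its central $n$-face, $H_{comb}$ applied to the full composition $\ord{n+1}\twoheadrightarrow\ord{1}$, which by \autoref{Cons:AugmentationExtension} equals $\aug{\ord{n+1}}=\ordop{n+1}$ and so becomes $S_{n+1}$ under $S$. The $2^n$ vertices of $L_n$ are indexed by subsets $I\subseteq\{1,\ldots,n\}$: at vertex $I$ one sits at the $(n-|I|)$-dimensional sub-face of $A_n$ selected by the alternating pattern $+-+-\ldots$ in the coordinates of $I$. By \autoref{Def:HCombOfGenerealMaps}, each $H_{comb}$-value appearing here is an explicit union of top-dimensional faces of $\ordop{n+1}$ glued along common subfaces, and under $S$ this becomes the corresponding iterated fibre product of $S_k$'s; by \autoref{Lem:GeneralCombConstruction}, the internal morphisms of $L_n$ become the canonical comparison maps between these fibre products.

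The crux is then a purely combinatorial identification. Iteratively applying \autoref{Lem:pullbackcube} and its multidimensional analogue \autoref{Cor:pullbackNcube}, together with the pasting lemma for pullbacks, I plan to reduce the statement that $L_n(H_n)$ is a pullback cube to the adaptedness (in the sense of \autoref{2SegalAdaptednessDef}) of $S$ to the lower triangulation of the appropriate cyclic polytope on $n+2$ vertices. The alternating $+-+-\ldots$ parity of the distinguished corner is exactly what produces Gale's evenness criterion (\autoref{Prop:loweruppersegal}), so the simplices appearing at the vertices of $L_n$ coincide with the even index sets of that polytope. For $n=2$ this recovers the familiar $S_3\cong S_2\times_{S_1}S_2$ coming from the even triangulation $\{0,1,2\},\{0,2,3\}$ of the square $C_2(3)$. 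Propagating this identification over all $n\geq d$ and invoking \autoref{Prop:DSegalD+1Segal} to link stronger instances to the weaker conditions needed for lower $d$-Segal yields the desired equivalence.

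The main obstacle is precisely this combinatorial matching: one must verify \emph{on the nose} that the sub-simplicial complexes generated by $H_{comb}$ at the vertices of $L_n$ coincide with Gale's even subsets and that their internal morphisms correspond to the pasting structure between adjacent top simplices along shared subfaces. I expect a clean proof by induction on $n$: the inductive step should identify the boundary sub-faces of $L_n$ with copies of $L_{n-1}(H_{n-1})$ applied to face maps of $S$, using the boundary decomposition of the associator cubes (\autoref{Lem:AssociatorCubeBoundary}) together with the monoidality of $\Corr_k(S)\circ H_{comb}$ from \autoref{CorrMonoidal} to translate ordered disjoint unions of smaller associator cubes into Cartesian products of fibre products inside $L_n$, so that the pullback check in dimension $n$ reduces to pullback checks in dimension $n-1$ plus the Segal condition at the newly appearing simplex.
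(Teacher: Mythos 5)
Your high-level outline is aligned with the paper's \autoref{thm:lowerupperSegal}: identify the centre of $H_n$ with $\ordop{n+1}$, read off the facets at the alternating corner via Gale's evenness criterion (\autoref{Prop:loweruppersegal}), and convert the pullback-cube condition into adaptedness to a triangulation using \autoref{Cor:pullbackNcube}. But there is a genuine gap in how you get from that local statement to the theorem. What the cube $H_n$ encodes is adaptedness to the \emph{top-dimensional} triangulations $\lowerSeg{n+1,n}$ and $\upperSeg{n+1,n}$ of $C_n(n+1)$, one polytope per $n$. Being $d$-Segal means adaptedness to the $d$-dimensional triangulations $\lowerSeg{n,d}$, $\upperSeg{n,d}$ of arbitrarily large cyclic polytopes. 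For the ``if'' direction you must descend from the former family to the latter, and \autoref{Prop:DSegalD+1Segal} — the only bridge you invoke — points the wrong way: it raises the Segal degree of a given object, it does not recover the $d$-dimensional conditions from the top-dimensional ones. The paper supplies this descent as a separate statement (\autoref{Prop:DSegalBySteps}), proved by induction on $n$: an arbitrary triangulation of $C_d(n+1)$ is refined through a triangulation of $C_n(n+1)$, whose cells project to copies of $C_d(n)$, and one invokes \autoref{PropPog:SegalTriangulations}. Because \autoref{PropPog:SegalTriangulations} intrinsically couples the lower and upper conditions, this step cannot be run ``lower half in isolation'' as you propose; the paper decouples only the single-polytope statement about $\lowerSeg{d+1,d}$ versus the subcube $(H_d)^{0\cdots0}_l$, and keeps both parities together for the global reduction.

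The crux combinatorial step is also not quite as you describe it. The vertices of the alternating subcube do \emph{not} all coincide with Gale-even facets: the directions $1$ and $d$ carry extra terms (unions of a facet with a lower simplex over a point), and for $d\geq 4$ there are whole directions (the odd, non-extremal ones in the lower case) whose opposite faces are joins $C_1\sqcup_{\ordop{0}}C_2$ of two \emph{degenerate} cubes rather than anything indexed by Gale subsets. The paper's proof must show that these faces are automatically sent to pullback cubes (being products of degenerate cubes) and peel them off one direction at a time with \autoref{Cor:pullbackNcube}, reducing to the subcube spanned by directions $1$, $d$ and the even ones. Your $n=2$ sanity check does not detect this because there every direction is extremal. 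Your proposed induction on $n$ via \autoref{Lem:AssociatorCubeBoundary} and \autoref{CorrMonoidal} does not obviously produce this peeling: the faces of $(H_n)^{0\cdots0}_l$ are anchored at the centre $\ordop{n+1}$ and are not copies of $L_{n-1}(H_{n-1})$, so the boundary decomposition of the associator cube is not the right inductive handle here.
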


This implies:

\begin{Theorem}
\label{thm:HigherSegalLax}
    A $d$-Segal object in $\Spaces$ which sends $\ordop{0}$ to the final object of $\Spaces$ defines a $(d-1)$-lax $\Ainfinity$ algebra object in the $(\infty,d)$-category $\Corr_d(\Spaces)$.
\end{Theorem}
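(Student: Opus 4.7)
The plan is to construct the $(d-1)$-lax $\Ainfinity$ algebra directly from $S$ using the combinatorial Hall algebra data of \autoref{sec:CombinatorialHall}. Concretely, for each $n$ set $\mathcal{A}_n := S_\bullet(H_n)$, where $H_n = H_{comb}(A_n)$ is the $n$-cube of correspondences built in \autoref{def:Hcomb} from the associator cube $A_n$ of \autoref{associatorCubesProp}. The task then splits into two parts matching \autoref{Def:LaxAinfinityAlgebra}: compatibility in the sense of \autoref{Def:AInfinityDatum}, and invertibility of $\mathcal{A}_n$ for $n \geq d$.

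For compatibility I would use \autoref{Lem:AssociatorCubeBoundary}, which identifies the boundary of each $n$-dimensional associator cube as an ordered disjoint union of lower-dimensional associator cubes and their degeneracies. Applying $H_{comb}$ and then $S_\bullet$, \autoref{CorrMonoidal} converts these ordered disjoint unions into Cartesian products of the cubes $\mathcal{A}_m$ (and their degeneracies) inside $\Corr_k(\Spaces)$, which is precisely the monoidal pairing demanded in \autoref{Def:AInfinityDatum}. Thus the boundary of each $\mathcal{A}_n$ is correctly assembled from lower-dimensional data, giving a compatible system of lax cubes in the sense of \autoref{sec:LaxCubes}.

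For invertibility I would invoke \autoref{thm:HigherSegalCubes}: the $d$-Segal hypothesis on $S$ is equivalent to $S$ sending every $H_n$ with $n \geq d$ to an invertible cube of correspondences in the sense of \autoref{invertiblecubes}. This is exactly the condition that $\mathcal{A}_n$ be invertible for $n > d - 1$, which is what \autoref{Def:LaxAinfinityAlgebra} requires of a $(d-1)$-lax $\Ainfinity$ object. Finally, to obtain an algebra object in $\Corr_d(\Spaces)$ (rather than in some $\Corr_N(\Spaces)$ with $N > d$), I would package the compatible system of cubes into a single map of $N$-upple Segal spaces into $\CORR_N(\Spaces)$, observe that the invertibility just established means it factors through $\CORR_{N,d}(\Spaces)$, and then apply \autoref{thm:CorrFactors} to descend the resulting map under $\HaugsengU$ to $\Corr_d(\Spaces)$.

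The main obstacle is the last step: turning the collection of individual cubes $\mathcal{A}_n$ with their boundary identifications into a single coherent map of $N$-upple Segal spaces. This is the lax analogue of the passage in \autoref{Prop:AinfinityDatumLurie} that uses \autoref{PropLurie:TauPushout} and \autoref{PropLurie:InessentialExtension} to glue the fundamental cubes into an $\Ainfinity$-algebra, but in our setting the higher cells need not be invertible, so the gluing must be performed in the $N$-upple context directly. One must check that the compatibility provided by \autoref{Lem:AssociatorCubeBoundary}, combined with the pullback preservation built into \autoref{def:NuppleCorr}, is exactly enough to produce such a map and that its image actually lies in $\CORR_{N,d}(\Spaces)$, so that \autoref{thm:CorrFactors} delivers an honest object of $\Corr_d(\Spaces)$.
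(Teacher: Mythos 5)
Your proposal is correct and follows essentially the same route as the paper: define $\mathcal{A}_n := S(H_n)$, get invertibility for $n\geq d$ from \autoref{thm:HigherSegalCubes}, get compatibility of boundaries from \autoref{Lem:AssociatorCubeBoundary} together with \autoref{CorrMonoidal}, and descend to $\Corr_d(\Spaces)$ via \autoref{Prop:CORRInvertibleCorr}/\autoref{thm:CorrFactors}. If anything you are more explicit than the paper, which compresses the compatibility check into ``obviously satisfy the requirements of \autoref{Def:AInfinityDatum}'' and likewise leaves implicit the gluing of the individual cubes into a single coherent map that you rightly flag as the delicate point.
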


\begin{proof}
By \autoref{thm:HigherSegalCubes} $S$ is a $d$-Segal object iff $S(H_n)\in\CORR_{n,d}(\Spaces)$ for all $n$. By \autoref{Prop:CORRInvertibleCorr} $\HaugsengU(\CORR_{n,d}(\Spaces))\cong\Corr_d(\Spaces),\forall n\geq d$ and so we get a system of cubes in $\Corr_d(\Spaces)$. Since they are the images of the associator cubes under the composition $S\circ H_{comb}$ they obviously satisfy the requirements of \autoref{Def:AInfinityDatum}.
\end{proof}

\begin{Corollary}
A 2-Segal object $S$ in $\CCC$ defines a non-unital $\Ainfinity$ algebra in $\Corr_1(\CCC)$.
\end{Corollary}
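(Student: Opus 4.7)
The plan is to specialize \autoref{thm:HigherSegalLax} to $d=2$ and then upgrade the resulting lax object in $\Corr_2(\CCC)$ to a strict $\AinfinityNU$-object in the $(\infty,1)$-category $\Corr_1(\CCC)$, before invoking \autoref{Prop:AinfinityDatumLurie} to match Lurie's definition of a non-unital $\Ainfinity$-algebra.

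First, apply \autoref{thm:HigherSegalLax} in the case $d=2$ (assuming as usual that $S(\ordop{0})$ is final in $\CCC$). The theorem produces a $1$-lax $\Ainfinity$-object in $\Corr_2(\CCC)$, i.e.\ by \autoref{Def:LaxAinfinityAlgebra} a compatible system of cubes $\mathcal{A}_k = (S\circ H_{comb})(A_k)$ whose $k$-dimensional members are invertible, in the sense of \autoref{invertiblecubes}, for all $k\geq 2$.

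Second, descend from $\Corr_2(\CCC)$ to $\Corr_1(\CCC)$. By \autoref{thm:HigherSegalCubes}, the $2$-Segal condition on $S$ is equivalent to $S$ sending every cube $H_n$ (with $n\geq 2$) to an invertible cube of correspondences; packaged as a morphism of $N$-upple Segal spaces into $\CORR_N(\CCC)$, this means the data factors through the subspace $\CORR_{N,1}(\CCC)$. By \autoref{thm:CorrFactors} the corresponding morphism of $(\infty,N)$-categories then factors through $\Corr_1(\CCC)\hookrightarrow\Corr_N(\CCC)$, yielding an $\AinfinityNU$-object in the sense of \autoref{Def:AInfinityDatum} in the monoidal $(\infty,1)$-category $\Corr_1(\CCC)$.

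Third, apply \autoref{Prop:AinfinityDatumLurie} to the $\AinfinityNU$-object from the previous step, converting the cubical datum into a non-unital $\Ainfinity$-algebra in $\Corr_1(\CCC)$ in the sense of \cite[Definition 4.1.3.16]{LurieAlgebra}.

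The main subtlety to verify is that the descent of Step 2 respects the boundary compatibilities of \autoref{Def:AInfinityDatum}: those compatibilities require composing lower-dimensional cubes via the monoidal product, but by \autoref{CorrMonoidal} the functor $\Corr_N(S)\circ H_{comb}$ is already monoidal, and the inclusion $\Corr_1(\CCC)\hookrightarrow\Corr_N(\CCC)$ preserves the Cartesian monoidal structure, so no extra coherence is needed. In particular, this is really just the observation that all the higher associator data lives automatically in the underlying $(\infty,1)$-category of $\Corr_N(\CCC)$ once the cubes are invertible, which is precisely the content of \autoref{Prop:CORRInvertibleCorr}.
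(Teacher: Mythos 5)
Your proposal is correct and follows essentially the same route as the paper: apply \autoref{thm:HigherSegalLax} at $d=2$, observe that invertibility of all associator cubes of dimension $\geq 2$ lets the data descend to $\Corr_1(\CCC)$ via \autoref{Prop:CORRInvertibleCorr} and \autoref{thm:CorrFactors}, and then invoke \autoref{Prop:AinfinityDatumLurie}. The paper's own proof is just a terser version of this, leaving the descent step and the monoidality check implicit, so your extra detail is a faithful unpacking rather than a different argument.
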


\begin{proof}
From \autoref{thm:HigherSegalLax} we get that the images of all associator cubes are invertible, and so this gives us a $\AinfinityNU$ object in $\Corr_1(\CCC)$ in the sense of \autoref{Def:AInfinityDatum}. By \autoref{Prop:AinfinityDatumLurie} this is equivalent to giving a non-unital $\Ainfinity$ algebra structure on $S_1$.
\end{proof}

\subsubsection{Proof of \autoref{thm:HigherSegalCubes}}

We will need the following reformulation of the $d$-Segal consitions:
\begin{Proposition}
\label{Prop:DSegalBySteps}
Let $S$ be a simplicial object in $\CCC$. The following are equivalent
\begin{enumerate}
    \item $S$ is $d$-Segal.
    \item $S$ is adapted to $\upperSeg{n+1,n}$ and $\lowerSeg{n+1,n}$ for all $n\geq d$.
\end{enumerate}
\end{Proposition}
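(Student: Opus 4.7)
The direction $(1) \Rightarrow (2)$ is immediate: by iterating \autoref{Prop:DSegalD+1Segal}, a $d$-Segal object is $n$-Segal for every $n \geq d$, hence in particular adapted to the two triangulations $\upperSeg{n+1,n}$ and $\lowerSeg{n+1,n}$ of $C_n(n+1)$.

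For $(2) \Rightarrow (1)$, my plan is a double induction. The outer induction is on $n \geq d$ and establishes adaptedness to $\upperSeg{n,d}$ and $\lowerSeg{n,d}$; the base cases $n = d$ (trivial, since the triangulation is a single simplex) and $n = d+1$ (the content of (2) at level $n = d$) are handled at once. For the inductive step at $n \geq d+2$, I would run a secondary downward induction on $d'$, from $d' = n$ down to $d' = d$, establishing at each level adaptedness to $\upperSeg{n, d'}$ and $\lowerSeg{n, d'}$; the top cases $d' \in \{n, n-1\}$ are again immediate (triangulation is a simplex, and hypothesis (2) respectively).

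For the nontrivial range $d \leq d' \leq n-2$, the plan is to begin with adaptedness to $\upperSeg{n, d'+1}$ (established at the previous inner stage), which expresses $S(\ordop{n})$ as a limit over the $(d'+1)$-simplices $\tau \in \upperSeg{n, d'+1}$. Each such $\tau$ has only $d'+2 \leq n$ vertices, so the outer induction hypothesis lets us further decompose $S(\tau)$ as a limit over the upper triangulation of $\tau$ viewed intrinsically as a cyclic polytope on its vertex set. Composing these two pullback descriptions and invoking associativity of pullbacks repeatedly (together with the higher-dimensional pasting \autoref{Cor:pullbackNcube}), the $d'$-faces that appear inside some intrinsic $\upperSeg{\tau, d'}$ but not in the ambient $\upperSeg{n, d'}$ get absorbed via identities of the form $X \times_A (A \times_B C) \cong X \times_B C$, leaving exactly the limit over $\upperSeg{n, d'}$. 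The argument for $\lowerSeg{n, d'}$ is symmetric under the swap of upper and lower.

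The main obstacle will be the combinatorial verification that the cascade of pullback cancellations collapses precisely to the limit over $\upperSeg{n, d'}$ — neither too much nor too little remains. This reduces, via Gale's evenness criterion (\autoref{Prop:loweruppersegal}) and the geometric identification of $\upperSeg{n, d'}$ with the projection of the upper boundary of $C_{d'+1}(n)$, to a careful tracking of which $d'$-subsets of $\{0, \ldots, n\}$ play which role. The worked case $n = 4$, $d = 2$ (starting from $\upperSeg{4, 3} = \{\{0,1,2,4\}, \{0,2,3,4\}\}$, with the internal shared face $\{0, 2, 4\}$ absorbed through pullback associativity to recover the pentagon's upper triangulation $\{\{0, 1, 4\}, \{1, 2, 4\}, \{2, 3, 4\}\}$) already exhibits the essential mechanism, and the general proof proceeds by the same pattern with more intricate indexing.
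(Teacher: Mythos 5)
Your direction $(1)\Rightarrow(2)$ is fine and agrees with the paper. For $(2)\Rightarrow(1)$, however, the entire inductive step rests on an unproven combinatorial identity: that refining each simplex $\tau$ of $\upperSeg{n,d'+1}$ by its intrinsic upper triangulation and then performing the pullback cancellations yields \emph{exactly} the limit over $\upperSeg{n,d'}$. You flag this yourself as ``the main obstacle'' and then discharge it with a single worked example ($n=4$) and the assertion that the general case ``proceeds by the same pattern.'' But this is precisely the nontrivial content of the step: via Gale evenness one would have to show that the odd $(d'+1)$-subsets of the odd $(d'+2)$-subsets of $\{0,\dots,n\}$ consist of the odd $(d'+1)$-subsets of $\{0,\dots,n\}$ together with exactly the internal walls of $\upperSeg{n,d'+1}$, that each such wall occurs as a top cell of the intrinsic triangulation of exactly one of the two facets it separates (so that it cancels against the gluing object), and that the induced gluing maps agree. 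None of this is established, so the proof is incomplete as it stands. There is also a structural confusion: decomposing $S(\tau)$ for a $(d'+2)$-vertex simplex $\tau$ uses adaptedness to $\upperSeg{d'+1,d'}$, which is hypothesis $(2)$ at level $d'$ (legitimate since $d'\geq d$), \emph{not} the outer induction hypothesis; in fact your outer induction on $n$ plays no role, and the argument is really a single descending induction on $d'$.

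The paper's proof sidesteps this combinatorics entirely. By \autoref{PropPog:SegalTriangulations}, adaptedness to both $\upperSeg{n,d}$ and $\lowerSeg{n,d}$ is equivalent to adaptedness to \emph{every} triangulation of $C_d(n)$; the paper then inducts on $n$, uses an arbitrary triangulation $\TTT_n$ of $C_n(n+1)$ to cover $C_d(n+1)$ by copies of $C_d(n)$ (via \autoref{Lem:CyclicPolytopes}), restricts the given triangulation $\TTT_d$ to the pieces, and factors $S_{n+1}\to S^{\TTT_d}$ through the two intermediate limits. Working with arbitrary triangulations removes all evenness bookkeeping. To repair your route you would either need to prove the Gale-evenness compatibility lemma described above (note that, if true, it would give the stronger statement that the upper and lower halves of the hypothesis can be used separately), or first convert your hypotheses into ``adapted to all triangulations'' via \autoref{PropPog:SegalTriangulations}, as the paper does.
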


\begin{proof}
$1\Rightarrow 2$ follows immediately from \autoref{Prop:DSegalD+1Segal}.

For the converse, using induction on $n$, we need to show that if for any $k\leq n$ $S$ is adapted to $\lowerSeg{k,d},\upperSeg{k,d}$ and $S$ is adapted to $\lowerSeg{n+1,n},\upperSeg{n+1,n}$ then $S$ is adapted to $\lowerSeg{n+1,d},\upperSeg{n+1,d}$.

From \autoref{PropPog:SegalTriangulations}, the above condition is equivalent to $S$ being adapted to any triangulation of $C_d(n)$ or $C_n(n+1)$, and what we want to show is equivalent to $S$ being adapted to any triangulation of $C_d(n+1)$. So consider a triangulation $\TTT_d$ of $C_d(n+1)$, and a triangulation $\TTT_n$ of $C_n(n+1)$. By \autoref{Lem:CyclicPolytopes} $\TTT_n$ induces a cover of $C_d(n+1)$ by $C_d(n)_\alpha$'s. The triangulation $\TTT_d$ induces triangulations $\TTT_{d,\alpha}$ of the $C_d(n)_\alpha$'s in the cover, and since $S$ is adapted to them the limit over each one is $S_n$. This implies that the map $S_{n+1}\to S^{\TTT_d}$ factors as \[
S_{n+1}\to S_{\TTT_n}\to \lim_\alpha S_{\TTT_{d,\alpha}}\]
and it is clear that both of these maps are isomorphisms.
\end{proof}
Using the above we see that \autoref{thm:HigherSegalCubes} is equivalent to:
\begin{Proposition}
\label{Prop:HigherSegalReduction}
$S$ sends $H_d$ to an invertible cube $\mathcal{A}_d$ iff $S$ is adapted to $\lowerSeg{d+1,d}$ and $\upperSeg{d+1,d}$.
\end{Proposition}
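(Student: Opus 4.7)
The strategy is to match, for each alternating pattern $\epsilon\in\{+,-\}^d$, the subcube $C^{0\cdots 0}_\epsilon$ of $\mathcal{A}_d$ directly with the diagram expressing adaptedness of $S$ to $\lowerSeg{d+1,d}$ (for $\epsilon=(+,-,+,-,\ldots)$) or $\upperSeg{d+1,d}$ (for $\epsilon=(-,+,-,+,\ldots)$). Since invertibility of $\mathcal{A}_d$ means that both of these subcubes are pullback cubes by \autoref{invertiblecubes}, the proposition follows from this pair of identifications.

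First, I would compute $S\circ H_{comb}$ on the subcube vertex by vertex. At the center $(0,\ldots,0)$ the composition of all $d$ edges of the associator cube is the unique surjection $\alpha:\ord{d+1}\to\ord{1}$, and directly from \autoref{Def:HCombOfGenerealMaps} we get $H_{comb}(\alpha)=\aug{\ord{d+1}}=\ordop{d+1}$, so $S(0,\ldots,0)=S_{d+1}$. At any other vertex of the subcube the corresponding composition of $p^m_i$-maps determines a subsimplicial set of $\ordop{d+1}$ given by $\bigcup_y\aug{f^{-1}(y)}$, embedded via \autoref{Cons:AugmentationExtension}; the transition maps in the subcube are inclusions or inclusions composed with collapses as classified by Cases 1 and 2 of \autoref{Lem:GeneralCombConstruction}. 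Because $S$ is a right Kan extension it sends colimits in $\sset$ to limits in $\Spaces$, so requiring the subcube to be a pullback is equivalent to requiring $\ordop{d+1}$ to be the colimit, in $\sset$, of the subsimplicial sets at the $2^d-1$ non-source vertices, with the described transition maps.

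The combinatorial core is that this colimit realizes the triangulation of $C_d(d+1)$ whose top-dimensional $d$-simplices are exactly the even (respectively odd) $(d+1)$-subsets of $\{0,\ldots,d+1\}$ from \autoref{Prop:loweruppersegal}. The verification for $d=2$ is immediate from \autoref{Ex:AssociatorSquareHcomb}: tracing the construction shows that the $+-$ subcube places inside $\ordop{3}$ the two $2$-simplices $\{0,1,2\}$ and $\{0,2,3\}$ meeting along the edge $\{0,2\}$, which is exactly $\lowerSeg{3,2}$, while the $-+$ subcube places $\{0,1,3\}$ and $\{1,2,3\}$ meeting along $\{1,3\}$, which is $\upperSeg{3,2}$. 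For general $d$ the same pattern persists: each top-dimensional simplex $\{0,\ldots,d+1\}\setminus\{j\}$ appears as $\aug{f^{-1}(y)}$ for a surjection $f$ reached along a codimension-one path from the center, and the alternating structure of $\epsilon$ forces $j$ to traverse precisely the indices of the prescribed parity.

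The main obstacle is to make the general-$d$ bookkeeping transparent rather than a brute calculation. I plan to exploit \autoref{Lem:AssociatorCubeBoundary}, which expresses the boundary of the $d$-associator cube as an ordered disjoint union of lower-dimensional associator cubes and degeneracies, together with \autoref{CorrMonoidal}, which converts such disjoint unions into Cartesian products after applying $S\circ H_{comb}$. This permits an induction on $d$: the boundary faces of the subcube $C^{0\cdots 0}_\epsilon$ inherit the lower-dimensional identifications already established, so the only new geometric content in dimension $d$ is the top-dimensional pullback statement, which matches exactly the single new adaptedness condition $\lowerSeg{d+1,d}$ or $\upperSeg{d+1,d}$ entering at that dimension; the two halves of the biconditional are then deduced from one another by the colimit/limit correspondence above.
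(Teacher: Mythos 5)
Your overall picture is the right one --- the two alternating subcubes of $S(H_d)$ should be matched with the two triangulations via the even/odd subset description of \autoref{Prop:loweruppersegal}, and your $d=2$ computation is correct --- but two steps of the plan do not hold up as stated. First, the pivot of the argument is false as written: the colimit in $\sset$ of the subsimplicial sets sitting at the $2^d-1$ non-source vertices is (at best) their \emph{union} inside $\ordop{d+1}$, which is a proper subcomplex (for $d=2$ it is two triangles glued along an edge, not all of $\ordop{3}$), so ``$\ordop{d+1}$ is the colimit'' never holds, and taken literally your criterion would make the pullback condition fail for every $S$. What you need, and what your next sentence suggests you intend, is: since the right Kan extension $S$ carries that colimit to the limit of the punctured cube, the pullback condition is equivalent to $S_{d+1}\to S(\mathrm{colim})$ being an equivalence, i.e.\ to adaptedness of $S$ to the resulting cover. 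Even after this repair, the claim that the colimit \emph{is} the union realizing $\lowerSeg{d+1,d}$ (resp.\ $\upperSeg{d+1,d}$) is the real content for general $d$: a punctured-cube colimit of subcomplexes equals their union only if the deeper vertices of the cube correctly compute all the relevant intersections, which you have not checked; nor have you compared the cubical limit with the iterated fibre product over the $T_i\cap T_{i+1}$ that defines adaptedness in \autoref{2SegalAdaptednessDef}.

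Second, the induction on $d$ in your last paragraph cannot supply the missing combinatorics. The pullback property of the $d$-dimensional subcube is a single condition on its source vertex and does not decompose into ``boundary statements in lower dimensions plus one new top-dimensional statement''; moreover, the lower-dimensional associator cubes produced by \autoref{Lem:AssociatorCubeBoundary} would relate to $\lowerSeg{k+1,k}$ for $k<d$, which is neither a hypothesis nor a conclusion of \autoref{Prop:HigherSegalReduction}, so there is nothing available to induct on. The paper's proof (of the reformulation \autoref{thm:lowerupperSegal}) works entirely within a fixed $d$: the distance-one vertices in roughly half of the $d$ directions of $(H_d)^{0\cdots0}_\epsilon$ carry exactly the $d$-simplices of the relevant triangulation (plus harmless extra edges in the two extreme directions), while the faces spanned by each remaining direction decompose as ordered unions $C_1\coprOver{\ordop{0}}C_2$ of \emph{degenerate} cubes, hence are sent by $S$ to products of degenerate cubes and are automatically pullbacks; \autoref{Cor:pullbackNcube} then peels off these directions one at a time, reducing the full subcube to the subcube that encodes adaptedness. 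Identifying these degenerate complementary faces is the ingredient your plan is missing, and without it (or a complete verification of the colimit claim above) the equivalence for general $d$ is not established.
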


\begin{Example}[\texorpdfstring{$d=2$}{d=2}]
\label{Ex:2AssCubeComb}
Consider the case $d=2$ which was discussed in \autoref{Ex:AssociatorSquareHcomb}. The associator $2$-cube is \[
\stik{1}{
\ord{3} \ar{r} \ar{d} \& \ord{2} \ar{d} \\
\ord{2} \ar{r} \& \ord{1}
}
\]
and so its image under $H_{comb}$ is $H_2$:
\[
\includegraphics[scale=0.8]{Figures/AssocInSset.pdf}
\]

In order for this square to go to an invertible square of correspondences in $\Spaces$, $S$ should take the squares in the upper right (i.e. $(1,0)$) and lower left (i.e. $(0,1)$) corners to pullback squares. One easily checks that this is equivalent to conditions $\lowerSeg{3,2}$ and $\upperSeg{3,2}$ respectively.
\end{Example}

\begin{Example}[\texorpdfstring{$d=3$}{d=3}] 
\label{Ex:3AssCubeComb}
The associator cube is
\begin{equation}
\label{PentagonCube}
\stik{1}{
 \& \ord{3} \ar{rr} \ar{dd} \& {} \& \ord{2} \ar{dd}\\
\ord{4} \ar[crossing over]{rr} \ar{dd} \ar{ur} \& {} \& \ord{3} \ar{dd} \ar{ur} \& {}\\
{} \& \ord{2} \ar{rr} \& {} \& \ord{1}\\
\ord{3} \ar{rr} \ar{ur} \& {} \& \ord{2} \ar{ur}
\latearrow{commutative diagrams/crossing over}{2-3}{4-3}{}
}
\end{equation}

Its image under $H_{comb}$ is a cube of correspondences of $\sset^{op}$, that is, a $2\times 2\times 2$ grid of commutative cubes in $\sset^{op}$ so that the outer shell is comprised of the images of the faces of the cube \autoref{PentagonCube} and the center is the 4-simplex. 

In order for this cube to be invertible by \autoref{invertiblecubes} the cubes in the upper-right-back (i.e. $(1,0,1)$) and lower-left-front (i.e. $(0,1,0)$) corners should go to pullback cubes under $S$. Let's consider first the upper-right-back cube:
\[
\includegraphics[scale=1.2]{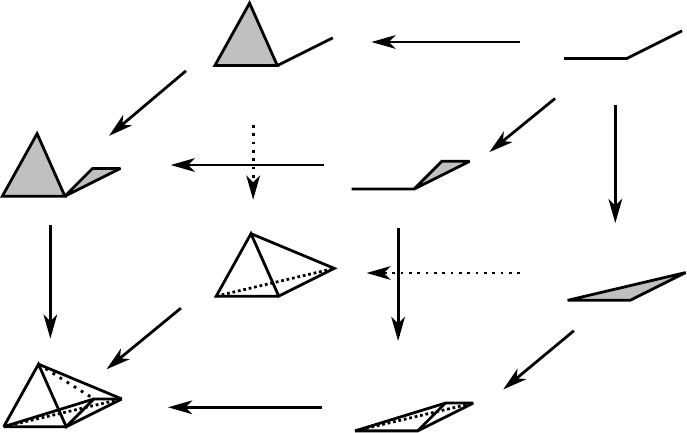}
\]

Its top face goes to a product of degenerate squares, hence a pullback. Therefore by \autoref{Cor:pullbackNcube} the cube is a pullback iff the bottom face is a pullback, and this is exactly $\upperSeg{4,3}$.

Now consider the lower-left-front cube:
\[
\includegraphics[scale=1.2]{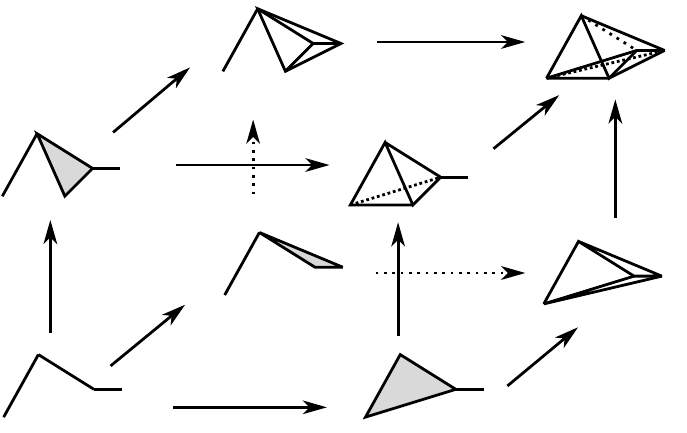}
\]
\end{Example}
The elements in the cube closest to $\ordop{4}$ correspond to the subsets $\{1,2,3,4\},\{0,1,3,4\},\{0,1,2,3\}$ and so the condition that this cube is a pullback is exactly $\lowerSeg{4,3}$.

Fix $d\geq 2$. As we explain below we can without loss of generality suppose that $d$ is odd. 
Let $u=(+,-,+,-,\ldots,+)$,$l=(-,+,-,+,\ldots,-)$. Recall from \autoref{sec:CorrCubes} that a cube of correspondences has vertices indexed by the faces of the lax cube, and that these are indexed by sequences in the symbols $\{-,0,+\}$ as outlined in \autoref{sec:LaxCubes}. Using \autoref{not:corrsubcube} for the subcubes of a cube of correspondences in $\CCC$ we must prove the following reformulation of \autoref{Prop:HigherSegalReduction}:

\newpage

\begin{Proposition}
\label{thm:lowerupperSegal}
\begin{itemize}
    \item[]%new line hack
    \item $S$ is adapted to $\lowerSeg{d+1,d}$ iff $S((H_d)^{00\ldots0}_l)$ is a pullback cube.
    \item $S$ is adapted to $\upperSeg{d+1,d}$ iff $S((H_d)^{00\ldots0}_u)$ is a pullback cube.
\end{itemize}
\end{Proposition}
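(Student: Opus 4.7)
We focus on the upper statement; the lower follows identically with $u$ replaced by $l$. We may assume $d$ is odd: for $d$ even the same argument works with the roles of $u$ and $l$ swapped, since whether a $(d+1)$-subset of $\{0,\ldots,d+1\}$ is even or odd in the sense of \autoref{Prop:loweruppersegal} is controlled by the parity of $d+1$.

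By construction, the vertices of $(H_d)^{00\ldots0}_u$ are indexed by sequences $\epsilon \in \{0, u_1\}\times \cdots\times \{0, u_d\}$, with the value at $\epsilon$ being $H_{comb}$ of the corresponding composition in the associator cube $A_d$ across the face specified by $\epsilon$. The source (all-zero) vertex is $H_{comb}(\ord{d+1} \to \ord{1}) = \aug{\ord{d+1}} = \ordop{d+1}$, and the structure maps of \autoref{Lem:GeneralCombConstruction} exhibit the remaining vertex values as sub-simplicial sets $U_\epsilon \subseteq \ordop{d+1}$. Since $S$ is a right Kan extension along $\OrdSet^{op}\hookrightarrow\sset^{op}$, it turns colimits in $\sset$ into limits in $\Spaces$, so $S((H_d)^{00\ldots0}_u)$ is a pullback cube iff the induced map $S(\ordop{d+1}) \to S(\Sigma_u)$ is an equivalence, where $\Sigma_u := \bigcup_{\epsilon \neq (0,\ldots,0)} U_\epsilon$ is the colimit in $\sset$ of the punctured subcube.

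The heart of the proof is the combinatorial identification $\Sigma_u = N(\upperSeg{d+1, d})$, since once this is established the adaptedness of $S$ to $\upperSeg{d+1,d}$ is by \autoref{2SegalAdaptednessDef} precisely the equivalence above. The plan is to trace each $U_\epsilon$ through the two cases of \autoref{Lem:GeneralCombConstruction}: Case~1 produces inclusions inside a common ambient simplex, while Case~2 embeds through the contravariant $\aug{f}$, converting a smaller simplicial set into the specific sub-$d$-simplex of $\ordop{d+1}$ dictated by the surjection $f = p^{d+1}_i$ (which is pinned down by the lexicographic convention in the proof of \autoref{associatorCubesProp}). Following the alternating sign pattern of $u = (+, -, +, \ldots, +)$, the corner vertex $U_u$ together with the axis-adjacent vertices produce, as their maximal simplices, exactly the odd $(d+1)$-element subsets of $\{0, 1, \ldots, d+1\}$, which by \autoref{Prop:loweruppersegal} are the top simplices of $\upperSeg{d+1, d}$. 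The vertices with more zeros fill in the intersections of these top simplices, so the iterated pushout in $\sset$ collapses to their union, namely $N(\upperSeg{d+1, d})$.

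The main obstacle is making the combinatorial identification rigorous in arbitrary dimension. The plan is to proceed by induction on $d$ using \autoref{Lem:AssociatorCubeBoundary}: the boundary faces of $A_d$ are ordered disjoint unions of lower-dimensional associator cubes and their degeneracies, so nested subcubes of $(H_d)^{00\ldots0}_u$ inherit smaller instances of the claim, and the parity of the selected axes tracks the odd-vs-even dichotomy of \autoref{Prop:loweruppersegal} that distinguishes $u$ from $l$. The explicit computations for $d=2$ and $d=3$ in \autoref{Ex:2AssCubeComb} and \autoref{Ex:3AssCubeComb} serve as the base of this induction and exhibit the pattern (e.g. for $d=3,\, l=(-,+,-)$ the three axis-adjacent simplices are $\{1,2,3,4\},\{0,1,3,4\},\{0,1,2,3\}$, which are exactly the simplices of $\lowerSeg{4,3}$).
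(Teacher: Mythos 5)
Your first reduction is fine: since $S$ is a right Kan extension it converts colimits of simplicial sets into limits, so $S((H_d)^{00\ldots0}_u)$ is a pullback cube if and only if $S$ inverts the map into $\ordop{d+1}$ from the colimit of the punctured cube (all vertices except the initial one). The gap is at the step you yourself call the heart of the proof. You define $\Sigma_u$ simultaneously as the union $\bigcup_{\epsilon\neq 0}U_\epsilon$ and as the colimit of the punctured cube, and then assert that ``the iterated pushout collapses to their union'', namely to $N(\upperSeg{d+1,d})$. These are two different things: a colimit of a punctured cube of subcomplexes of $\ordop{d+1}$ is an iterated pushout and agrees with the union only if the deeper vertices of the cube compute the correct intersections of the outer ones. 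Verifying this for the full $d$-cube is nontrivial, in particular because your description of the axis-adjacent vertices is not accurate: for the lower case with $d$ odd, the vertex adjacent to $\ordop{d+1}$ in an odd direction $i$ is $H_{comb}(\ord{d+1}\to\ord{2})$, i.e.\ the decomposable complex $\ordop{i}\coprOver{\ordop{0}}\ordop{d+1-i}$, which contains a top simplex of $\lowerSeg{d+1,d}$ only for $i=1,d$ (and then only together with an extra low-dimensional summand that has to be seen to cancel); for odd $1<i<d$ it is not a top simplex at all. The proposed induction on $d$ via \autoref{Lem:AssociatorCubeBoundary} does not obviously supply the missing verification, since that lemma describes the boundary faces of the associator cube rather than the colimit of the punctured correspondence cube.

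The paper's proof avoids computing the full punctured-cube colimit. After computing the maps $H_d(v_i^{\pm})\to H_d(00\ldots0)$ explicitly, it isolates the subcube $(H_d)_{\text{lower}}$ spanned by directions $1$, $d$ and the even directions --- exactly the directions whose axis vertices carry the top simplices of $\lowerSeg{d+1,d}$ --- and identifies adaptedness to the triangulation with that subcube being a pullback. Each remaining odd direction $j$ is then discarded one at a time: the face opposite to $(H_d)_{\text{lower}}$ in direction $j$ splits as $C_1\coprOver{\ordop{0}}C_2$ with both factors degenerate in the sense of \autoref{Def:degenerateCube}, so its image under $S$ is a product of degenerate cubes and hence automatically a pullback, and \autoref{Cor:pullbackNcube} shows that adjoining direction $j$ does not change whether the total cube is a pullback. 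This degenerate-product mechanism is the essential ingredient absent from your plan; without it, or an equivalent direct proof that the iterated pushout over the whole punctured cube really is $N(\lowerSeg{d+1,d})$, the argument is incomplete. Your parity bookkeeping via \autoref{Prop:loweruppersegal}, the reduction to $d$ odd, and the base cases $d=2,3$ are consistent with the paper.
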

\begin{proof}
Let $v$ be a sequence in $\{-,0,+\}$ and denote by $H_d(v)$ the simplicial set in the $v$ position in the cube of correspondences $H_d$. Denote $s:=00\ldots 0$.
Recall from \autoref{sec:CombinatorialHall} that $H_d(s)=H_{comb}(\ord{d+1}\to\ord{1})$. 

Let $v_i^+$ (resp. $v_i^-$) be the sequence with $0$ everywhere except at the $i^\text{th}$ place where it has a $+$ (resp. $-$). 
Let us compute the maps $H_d(v_i^\pm)\to H_d(s)$.

Consider first the $v_i^+$ case. $v_i^+$ corresponds to the codimension $1$ face of the associator cube which is generated by all paths starting from the vertex one gets by travelling from $\ord{d+1}$ in the $i^\text{th}$ direction, and ends in $\ord{1}$ (cf. \autoref{def:Hcomb}). This means that the map $H_d(v_i^+)\to H_d(s)$ is the map \[
H_{comb}(\ord{d}\to\ord{1})\to H_{comb}(\ord{d}\to\ord{1}\circ p_i)=H_{comb}(\ord{d+1}\to\ord{1})=\ordop{d+1}
\]
where $p_i:\ord{d+1}\to\ord{d}$ is the surjection with $p_i(i)=p_i(i+1)=i$.

Therefore from \autoref{def:Hcomb} this is the imbedding of the $d$-simplex into the $d+1$-simplex corresponding to the subset inclusion $\{0,1,\ldots \hat{i},\ldots,d+1\}\subset \{0,1,\ldots,d+1\}$.

Now consider the $v_i^-$ case. $v_i^-$ corresponds to the face of the associator cube starting at $\ord{d+1}$ and ending at $\ord{2}$ which is generated by moving in all directions \emph{except} the $i^\text{th}$. Therefore $H_d(v_i^-)$ is $H_{comb}$ applied to the map $\ord{d+1}\xrightarrow{q_i}\ord{2}$ corresponding to the partition $\ord{d+1}=1,2,\ldots,i\sqcup i+1,\ldots,d+1$. The map $H_d(v_i^-)\to H_d(s)$ is then the map
\[
H_{comb}(q_i)\to H_{comb}((\ord{2}\to\ord{1})\circ q_i)=H_{comb}(\ord{d+1}\to\ord{1})=\ordop{d+1}
\]
In terms of simplicial sets this is the imbedding of the gluing of two lower dimensional simplices over a point into a $d+1$-simplex corresponding to the decomposition of sets $\{0,1,2,\ldots,i\}\sqcup_{\{i\}} \{i,\ldots,d+1\}$.

Let us now prove the $\lowerSeg{d+1,d}$ case. Recall from \autoref{Prop:loweruppersegal} that the triangulation $\lowerSeg{d+1,d}$ is determined by all even subsets of size $d+1$ in $\{0,1,\ldots,d+1\}$. The target vertex of the cube $(H_d)^{00\ldots0}_l$ is $H_d(00\ldots0)$. The vertex distance 1 from the target in the $i^{\text{th}}, 1\leq i \leq d$ direction inside $(H_d)^{00\ldots0}_l$ is $H_d(v_i^{\sgn{(-1)^i}})$. In particular, if $i$ is even the map to the target corresponds to the inclusion of the size $d+1$ even set $\{0,1,\ldots \hat{i},\ldots,d+1\}$. Additionally, if $i=1$ the map corresponds to the inclusion of the union of the size $d+1$ even set $\{1,\ldots,d+1\}$ with the set $\{0,1\}$ and similarly for $i=d$. In all we get all the simplexes corresponding to even subsets of size $d+1$, along with some extra terms in directions $1,d$.

Therefore we see that $S$ is adapted to $\lowerSeg{d+1,d}$ iff $S$ applied to the subcube generated by the directions $1,d$ and all even directions is a pullback cube (it is straightforward to check that the extra terms in the $1$ and $d$ directions cancel out in the pullback diagram as in \autoref{Ex:2AssCubeComb} and \autoref{Ex:3AssCubeComb}). Denote this subcube $(H_d)_\text{lower}$.

To see that this is equivalent to the image of the whole cube being a pullback consider an odd direction $j$ different from $1,d$ and consider the cube generated by the above and this extra direction.
It has a codimension 1 face $(H_d)_\text{lower}$.
We want to examine the opposite face. Denote it by $C$.
Using the same considerations as above we see that the target vertex of $C$ is $H_{comb}(v_j^-)$ and so it is equal to the union $\ordop{j}\coprOver{\ordop{0}}{\ordop{d-j+1}}$.
Hence from \autoref{def:Hcomb} it follows that $C$ decomposes into a union of cubes $C_1\coprOver{\ordop{0}} C_2$ in a compatible way.

Moreover, we claim that for each fixed direction in the face in question the maps in either $C_1$ or $C_2$ are identity maps (as in \autoref{Ex:3AssCubeComb}).

To see this note that in terms of the indexing via cube faces $C=S((H_d)^{v_j^-}_{l'})$ for $l':=(-,+,0,+,0,+,0,\ldots, -)$, i.e. $l'$ is a sequence with $0$'s appearing in all odd places except $1,j,d$. In other words $C$ is the cube generated by paths going from $v_j^-$ in all the directions that appear in the cube $(H_d)_{lower}$. When moving in each direction we are either changing only in the first part (before the "$-$") i.e. in $C_1$ or only in the second part i.e. in $C_2$ and keeping the other part fixed.

Now since in both $C_1$ and $C_2$ there are non-trivial maps (since both $(d+1)-j$ and $j$ are more than $1$) this implies that they are both degenerate cubes (as in \autoref{Def:degenerateCube}). Applying $S$ we obtain a product of two degenerate cubes which is therefore automatically a pullback cube.

Using \autoref{Cor:pullbackNcube} we see that the image of the whole subcube generated by $(H_d)_\text{lower}$ and direction $j$ is a pullback cube iff the image of $(H_d)_\text{lower}$ is. Proceeding in the same way by induction we get that $S((H_d)^{00\ldots0}_l)$ is a pullback cube iff $S((H_d)_\text{lower})$ is a pullback cube iff $S$ is adapted to $\lowerSeg{d+1,d}$.

The proof for the $\upperSeg{d+1,d}$ case, and the $d$ even cases is exactly the same, replacing even with odd and making the obvious adjustments.
\end{proof}

\begin{appendices}
\section{Segal model for \texorpdfstring{$(\infty,n)$}{(infinity,n)}-categories}

\label{sec:uppleSegal}
Let us denote by $\CAT{1}$ the $(\infty,1)$-category of infinity categories, and let $\Spc \hookrightarrow \CAT{1}$ be the $(\infty,1)$-category of $\infty$-groupoids. The realization of $\CAT{1}$ by quasicategories was introduced by Joyal \cite{Joyal2008} and later extensively developed by Lurie in \cite{LurieTopos, LurieAlgebra}. The realization via \emph{complete Segal spaces} was proposed by Rezk in \cite{RezkSegal}. The latter allows for a certain generalization which provides a model for $(\infty,n)$-categories in  \cite{BarwickSegal}. We briefly recall the relevant background in this section.

\subsection{Segal objects}

\begin{Definition}
A \emph{Segal space} is a simplicial space $X:\OrdSet^{op}\to\Spc$ that satisfies the following condition called the \emph{Segal condition} (it is equivalent to the special case $d=1$ of the $d$-Segal conditions): Decomposing $n=m+k$ and denoting by $X_n$ the image of $\ordop{n}$ we have a square in $\OrdSet$\[
\stik{1}{
\ordop{n} \& \ar{l} \ordop{m}\\
\ordop{k} \ar{u}\& \ar{l} \ar{u} \ordop{0}
}
\]
where $\ordop{m}\hookrightarrow\ordop{n}$ is the imbedding of the first $m+1$ elements and $\ordop{k}\hookrightarrow\ordop{n}$ is the imbedding of the last $k+1$ elements. The image of this square under $X$ is a square in $\Spaces$\[
\stik{1}{
X_n \ar{d} \ar{r} \& X_m\ar{d}\\
X_k \ar{r} \& X_0
}
\]
and we require this square to be a pullback.
\end{Definition}

The space $X_0$ should be thought of as the space of objects and the space $X_1$ as the space of arrows. The basic case of the Segal condition is the map $X_2\to X_1\times_{X_0} X_1$. Choosing an inverse for this map and composing it with the third map $X_2\to X_1$ gives a "composition of arrows" map for our category $X$. The spaces $X_n$ encode the higher associativity data for the composition of morphisms. 

The structure of a space on $X_1$ remembers the structure of isomorphisms of arrows in our category. 

Formally, the category of $(\infty,1)$-categories is given by the above construction after specifying a certain model structure and localizing its weak equivalences. The local objects in this model structure are \emph{complete} Segal spaces. The details can be found in \cite{RezkSegal}.
Note that the notion of Segal \emph{space} can be adapted verbatim to any target category with finite limits to yield the general notion of a Segal \emph{object} in that category. As in the $(\infty,1)$-case in order to obtain a model for $(\infty,n)$-categories we need to localize by a certain class of morphisms. A localization modelled on the notion of completeness for Segal spaces the was proposed in \cite{BarwickSegal} and leads to the following inductive definition of $(\infty, n)$-categories:

\begin{Definition}
An $n$-fold Segal space $X$ is a Segal object in the category of $(n-1)$-fold Segal objects such that the $n-1$-Segal object $X_0{,\ldots}$ is constant.
\end{Definition}

\begin{Definition}
The category of $(\infty,n)$-categories $\CAT{N}$ is the category of \emph{complete n-fold Segal objects} $X$ in $\CAT{(N-1)}$.
\end{Definition}

\begin{Example}[case n=2]
Unravelling the definition we get the following - a 2-fold Segal space can be identified with a bisimplicial space $X:\OrdSet^{\times 2}\to\Spaces$ such that \begin{itemize}
    \item The simplicial space $X_{0,-}$ is constant.
    \item The simplicial space $X_{i,-}$ is Segal.
\end{itemize}
\end{Example}

This tells us that if $X$ is the nerve of a category, then for instance $X_{1,1}$ consists of cells of the form 
\[
\stik{1}{
A \ar{r} \ar{d}[above,sloped]{\sim}\& B \ar{d}[above,sloped]{\sim} \ar[Rightarrow, shorten <=1em,shorten >=1em]{dl}\\
A' \ar{r}\& B'
}
\]
which are called \emph{Rezk type} cells.

\subsection{Underlying category functors}
\label{Seq:UnderlyingCategoryFunctor}
The canonical embedding $i:\Spc\to\CAT{1}$ has a right adjoint which we will denote $\square^{\Spc}$ and call the \emph{underlying space functor}.

By induction, this gives for any $N$ a functor \[\square^{\CAT{(N-1)}}:\CAT{N}\to\CAT{(N-1)}\] which we will call the \emph{underlying category functor}.

For example if $\CCC\in\CAT{2}$, then $\CCC^{\CAT{1}}$ is the simplicial object in spaces which sends $\ordop{n}$ to the space $\Maps(\ordop{n},\CCC)$.

\subsection{\texorpdfstring{$N$}{N}-uple Segal spaces}
In many cases, and in particular for our constructions in \autoref{sec:CombinatorialHall} a more cubical model is needed, i.e. we want to have more general cells of the form
\[
\stik{1}{
A \ar{r} \ar{d}[above,sloped]{}\& B \ar{d}[above,sloped]{} \ar[Rightarrow, shorten <=1em,shorten >=1em]{dl}\\
A' \ar{r}\& B'
}
\]
where we don't assume that two of the sides are equivalences. It is easy to see that the existence of such cells would interfere with the constancy condition in the definition of $N$-fold Segal spaces, and so (as in e.g. \cite{HaugsengSpans}) we introduce the auxilliary category of $N$-uple Segal spaces defined inductively as follows:

\begin{Definition}
 An $N$-uple Segal space is a Segal object in $(N-1)$-uple Segal spaces.
\end{Definition}

Unravelling the definition we can rephrase the definition as saying that an $N$-uple Segal space is a $\OrdSet^{\times N}$ space satisfying Segal conditions when we fix all indices but one. By definition, an $N$-fold Segal space is also an $N$-uple Segal space, and we have:
 
\begin{Proposition}[\cite{HaugsengSpans} \S4]
The inclusion of $N$-fold Segal spaces in $N$-uple Segal spaces has a right adjoint, denoted $\HaugsengU$.
\end{Proposition}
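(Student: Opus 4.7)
The plan is to construct $\HaugsengU$ by induction on $N$. The base case $N=1$ is trivial: $1$-fold and $1$-uple Segal spaces coincide, both being Segal simplicial spaces, so $\HaugsengU$ is the identity. For the inductive step, the key observation is that the gap between $N$-fold and $N$-uple Segal spaces lies entirely in the essential-constancy requirement on the outermost $0$-slice, so $\HaugsengU$ should act as the identity on the inductive Segal structure while cutting down to a largest subobject satisfying that constancy.

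Concretely, present an $N$-uple Segal space $X$ as a Segal object $X_\bullet:\OrdSet^{op}\to(N{-}1)\text{-uple Segal spaces}$. First, apply $\HaugsengU^{(N-1)}$ degreewise to obtain a simplicial object $\widetilde{X}_\bullet$ in $(N{-}1)$-fold Segal spaces; since $\HaugsengU^{(N-1)}$ is a right adjoint, it preserves limits, so the Segal conditions on $X_\bullet$ transfer to $\widetilde{X}_\bullet$. Next, replace $\widetilde{X}_0$ by its maximal essentially constant sub-$(N{-}1)$-fold Segal space $Y_0$, obtained by iterating the underlying-category functors $\square^{\CAT{k}}$ from \autoref{Seq:UnderlyingCategoryFunctor}. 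Finally, propagate this constraint to each $\widetilde{X}_n$ by restricting to the subspace of cells whose restrictions along the $n+1$ vertex inclusions $\ordop{0}\to\ordop{n}$ land in $Y_0$, and define $\HaugsengU(X)$ to be this restricted simplicial object.

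The universal property is then almost tautological: if $Y$ is an $N$-fold Segal space, then the Segal and essential-constancy constraints that carve out $\HaugsengU(X)$ inside $X$ already hold in $Y$, so any morphism $Y\to X$ of $N$-uple Segal spaces factors uniquely through $\HaugsengU(X)\hookrightarrow X$. The main obstacle I expect is verifying that the restriction-to-compatible-cells step preserves the Segal conditions established at the previous stage; this reduces to a stability-under-pullback statement for the maximal-subgroupoid construction inside higher categories, which is the technical heart of the argument in \cite{HaugsengSpans}. An abstract alternative that avoids the explicit construction is to verify that both categories are presentable $\infty$-categories and that the inclusion preserves small colimits, then invoke the $\infty$-categorical Adjoint Functor Theorem; this displaces the burden onto establishing colimit-preservation of the inclusion, which is itself non-trivial and typically handled by presenting both categories as accessible localizations of the same functor category.
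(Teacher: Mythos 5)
The paper does not actually prove this statement; it is imported from \cite{HaugsengSpans}, and the surrounding text offers only the heuristic that $\HaugsengU$ ``picks out the Rezk-type cells.'' Your explicit inductive construction is essentially the one Haugseng gives: coreflect levelwise into $(N-1)$-fold Segal spaces, then impose constancy in the outermost direction by pulling back along degeneracies, checking by commuting limits that the Segal conditions survive each step; the universal property then follows because the vertex maps of an $N$-fold Segal space $Y$ factor through $Y_{0,\ldots,0}$ via the constancy equivalences. So the overall route is sound, and it is more informative than the citation in the paper. (Your ``abstract alternative'' via the adjoint functor theorem is weaker than you suggest: both categories are \emph{reflective} localizations of the same presheaf category, which produces left adjoints for free, and the colimit-preservation of the inclusion needed for a right adjoint is essentially equivalent to having the explicit construction in hand.)

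One step is misstated and would break the argument if taken literally. The coreflection $Y_0$ of $\widetilde{X}_0$ onto constant $(N-1)$-fold simplicial objects is \emph{not} obtained by iterating the underlying-category functors $\square^{\CAT{k}}$ of \autoref{Seq:UnderlyingCategoryFunctor}: those are right adjoint to inclusions of \emph{complete} Segal objects and compute maximal sub-$\infty$-groupoids, whereas nothing here is assumed complete. The right adjoint to the constant-diagram inclusion $\Spc\to\Fun((\OrdSet^{op})^{N-1},\Spc)$ is evaluation at the initial object $(\ordop{0},\ldots,\ordop{0})$, so $Y_0$ must be the constant object on the single space $\widetilde{X}_{0,\ldots,0}$, with counit the degeneracies. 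For non-complete objects the two recipes genuinely differ (for the levelwise-discrete nerve of an ordinary category, evaluation at $0$ gives the set of objects, while the iterated core gives the classifying space of the maximal subgroupoid), and only the former has the universal property you need, since a map from a constant diagram corresponds exactly to a map of spaces into $\widetilde{X}_{0,\ldots,0}$. Relatedly, because degeneracies need not be monomorphisms, $\HaugsengU(X)_n$ should be defined as the pullback $\widetilde{X}_n\times_{\widetilde{X}_0^{\times(n+1)}}Y_0^{\times(n+1)}$ rather than as a literal subspace of cells; with that phrasing your ``almost tautological'' factorization becomes the (correct, small) computation that $\Map(Y,-)$ turns this pullback into a pullback of mapping spaces one of whose legs is an equivalence by the constancy of $Y_0$.
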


In terms of the types of cells that appear, the functor $\HaugsengU$ simply picks out the subcategory of Rezk type cells.

To see heuristically why this procedure does not lose significant information we can consider the following diagram:
\begin{equation}
    \label{Eq:SquareConnection}
    \stik{1}{
    A \ar[equals]{r} \ar[equals]{d} \& A  \ar[equals, shorten <=1em,shorten >=1em]{dl} \ar{r} \ar{d}[above,sloped]{}\& B \ar{d}[above,sloped]{} \ar[Rightarrow, shorten <=1em,shorten >=1em]{dl} \ar{r} \& B' \ar[equals]{d}  \ar[equals, shorten <=1em,shorten >=1em]{dl}\\
    A \ar{r} \& A' \ar{r}\& B' \ar[equals]{r}\& B'
    }
\end{equation}
Composing this diagram gives us a Rezk type cell, and in all the examples we will consider, there will always be a canonical way to construct the "trivial" left and right squares.
\section{Pullback cube criterion}

\label{sec:pullbacklemma}
\begin{Definition}
\label{Def:PBCube}
A commutative cube is said to be a \emph{pullback} cube if it presents the source vertex as the limit of the rest of the diagram. 
\end{Definition}

\begin{Lemma}
\label{Lem:pullbackcube}
Consider a cube in an $\infty$-category
\[
\stik{1}{
 \& A \ar{rr} \ar{dd} \& {} \& B \ar{dd}\\
X \ar[crossing over]{rr} \ar{dd} \ar{ur} \& {} \& Y \ar{dd} \ar{ur} \& {}\\
{} \& C \ar{rr} \& {} \& D\\
Z \ar{rr} \ar{ur} \& {} \& W \ar{ur}
\latearrow{commutative diagrams/crossing over}{2-3}{4-3}{}
}
\]
And suppose that $\squCorns{A}{B}{C}{D}$ is a pullback square, then $\squCorns{X}{Y}{Z}{W}$ is a pullback square if and only if the whole cube is a pullback cube.
\end{Lemma}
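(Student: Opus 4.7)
The plan is to interpret the pullback-cube condition as a limit statement and reduce it to the front-face pullback condition via a two-step manipulation of limits. Write $F \colon \square^3 \to \CCC$ for the functor encoding the cube, let $X$ be the initial vertex, and set $I' := \square^3 \setminus \{X\}$. By \autoref{Def:PBCube}, the cube is a pullback iff $X \to \lim_{I'} F$ is an equivalence; meanwhile $\squCorns{X}{Y}{Z}{W}$ is a pullback iff $X \to Y \times_W Z = \lim_{\{Y,Z,W\}} F$ is an equivalence. The full inclusion $\{Y,Z,W\} \hookrightarrow I'$ induces a comparison $\lim_{I'} F \to Y \times_W Z$ through which both canonical maps factor, so it suffices to show that this comparison is an equivalence.

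To do this I would factor the inclusion through the intermediate poset $I'' := I' \setminus \{A\} = \{Y, Z, W, B, C, D\}$ and establish two separate equivalences. The first is $\lim_{I'} F \simeq \lim_{I''} F|_{I''}$, which I would obtain by showing that $F|_{I'}$ is the pointwise right Kan extension of $F|_{I''}$ along the full embedding $\iota \colon I'' \hookrightarrow I'$. The only object in $I' \setminus I''$ is $A$, and its comma category $(\iota \downarrow A)$ is the cospan $B \to D \leftarrow C$ sitting inside $I''$, whose limit is $B \times_D C$. The back-pullback hypothesis identifies this with $F(A) = A$, verifying the pointwise criterion for right Kan extension; the desired equivalence of limits then follows from the standard fact that $\lim_I (\mathrm{Ran}_\iota G) \simeq \lim_J G$.

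The second equivalence is $\lim_{I''} F|_{I''} \simeq \lim_{\{Y,Z,W\}} F = Y \times_W Z$, which I would derive from the cofinality criterion for limits applied to $j \colon \{Y,Z,W\} \hookrightarrow I''$. One needs to verify that for every $i \in I''$ the comma category $(j \downarrow i)$ has weakly contractible nerve. A direct case analysis suffices: over $Y$, $Z$, $B$, $C$ the comma category is a singleton (the only vertex of $\{Y,Z,W\}$ mapping to $B$ in $\square^3$ is $Y$, and similarly only $Z$ reaches $C$), while over $W$ and over $D$ it is the cospan $Y \to W \leftarrow Z$, whose nerve is contractible. Composing the two equivalences then yields $\lim_{I'} F \simeq Y \times_W Z$, as required.

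The point to handle carefully is the pointwise right Kan extension argument of the first step, since this is the sole place where the back-pullback hypothesis enters; once it is in place, the cofinality check of the second step is a routine inspection of small posets, and the conclusion then drops out by chasing the resulting triangle of canonical maps.
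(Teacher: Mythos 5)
Your argument is correct, but it proceeds along a genuinely different route from the paper's. The paper works with categories of cones: it forms the pullback categories $X_{\pbcube}$, $X_{\pbsquare-front}$, $X_{\pbsquare-back}$ of cones over the respective corners, uses the hypothesis on $\squCorns{A}{B}{C}{D}$ to produce a final object of $X_{\pbsquare-back}$, and deduces from a pullback square of categories that the restriction $X_{\pbcube}\to X_{\pbsquare-front}$ is an equivalence, so that final objects (i.e.\ pullbacks) on the two sides correspond. You instead manipulate the limit over the punctured cube directly: first absorbing the vertex $A$ by recognizing the diagram on $I'$ as the right Kan extension of its restriction to $I''$ (this is exactly where the back-face hypothesis enters, via the pointwise formula $\lim(B\to D\leftarrow C)=B\times_D C\simeq A$), and then contracting $I''$ onto the cospan $Y\to W\leftarrow Z$ by the coinitiality criterion, whose case-by-case verification on this small poset is correct. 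Both proofs are valid; yours has the advantage of being a direct limit-decomposition argument that visibly generalizes to the $n$-cube statement of \autoref{Cor:pullbackNcube} (delete the initial vertex of the distinguished subcube by a Kan extension, then run a coinitiality check), while the paper's cone-category formulation packages the two directions of the ``iff'' into a single equivalence of categories without any cofinality computation. One notational slip worth fixing: for a pointwise \emph{right} Kan extension along $\iota\colon I''\hookrightarrow I'$ the relevant index category at $A$ is the comma category $(A\downarrow\iota)$ of objects of $I''$ \emph{under} $A$, not $(\iota\downarrow A)$; the category you actually describe (the cospan $B\to D\leftarrow C$) is the correct one, so the mathematics is unaffected.
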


\begin{proof}
Before presenting the general proof it is instructive to consider the case of a usual 1-category:

Assume $\squCorns{X}{Y}{Z}{W}$ is a pullback square.

Consider another cube 
\[
\stik{1}{
 \& A \ar{rr} \ar{dd} \& {} \& B \ar{dd}\\
\widetilde{X} \ar[crossing over]{rr} \ar{dd} \ar{ur} \& {} \& Y \ar{dd} \ar{ur} \& {}\\
{} \& C \ar{rr} \& {} \& D\\
Z \ar{rr} \ar{ur} \& {} \& W \ar{ur}
\latearrow{commutative diagrams/crossing over}{2-3}{4-3}{}
}
\]
We want to show that there is a unique map $\widetilde{X}\to X$ that makes everything commute.

Since $XYZW$ is a pullback square we have a unique map $\widetilde{X}\to X$ such that $\widetilde{X}Y=XY\circ\widetilde{X}X$ and $\widetilde{X}Z=XZ\circ\widetilde{X}X$.

we just need to show that $\widetilde{X}A=XA\circ\widetilde{X}X$. This follows because both sides are a map $\widetilde{X} \to A$ which make the diagram
\[
\stik{1}{
\widetilde{X} \ar{dr} \ar{drr} \ar{ddr} \\
{} \& A \ar{d} \ar{r}\& B \ar{d} \\
{} \& C \ar{r} \& D
}
\]
commute. Since we assumed $ABCD$ is a pullback such a map is unique.

Assume now that the cube is a pullback and consider a square \[
\stik{1}{
\widetilde{X} \ar{r} \ar{d} \& Y \ar{d} \\
Z \ar{r} \& W
}
\]
We want to show that there is a unique map $\widetilde{X}\to X$ which makes the diagram 
\[
\stik{1}{
\widetilde{X} \ar{dr} \ar{drr} \ar{ddr} \\
{} \& X \ar{d} \ar{r}\& Y \ar{d} \\
{} \& Z \ar{r} \& W
}
\]
commute.

The compositions $YB\circ\widetilde{X}Y$ and $ZC\circ\widetilde{X}Z$ fit in a commutative square 
\[
\stik{1}{
\widetilde{X} \ar{r} \ar{d} \& B \ar{d} \\
C \ar{r} \& D
}
\]
and so there is a map $\widetilde{X}\to A$ that makes everything commute, and since the cube is a pullback this gives us our desired map $\widetilde{X}\to X$.

\flushleft{\textbf{General case:}}

Le us reformulate this proof in  $\infty$-categorical language. Using the standard approach, consider the map 
\[
\stik{1}{
{} \&  \bullet\ar{d} \\
 \bullet\ar{r} \& \bullet
}
\xhookrightarrow{}
\pt
\]
For any category $\Spaces$ this induces a functor $\Spaces\to\Spaces^\pbcorner$.

Given a corner, i.e. a map $\point\to\Spaces^\pbcorner$, we can consider the pullback in the ($\infty$-)category of ($\infty$-)categories:

\[
\stik{1}{
X_\pbsquare \ar{d} \ar{r}[above]{} \& \Spaces  \ar{d}{}\\
\point \ar{r}{} \&  \Spaces^\pbcorner
}
\]
This is just the category of cones over the corner and so a pullback of the corner is the same as a final object in $X_\pbsquare$.

We can do the same for 
\[
\pbcubecorner := \stik{0.3}{
 \& \bullet \ar{rr} \ar{dd} \& {} \& \bullet \ar{dd}\\
{}  \& {} \& \bullet \ar{dd} \ar{ur} \& {}\\
{} \& \bullet \ar{rr} \& {} \& \bullet\\
\bullet \ar{rr} \ar{ur} \& {} \& \bullet \ar{ur}
\latearrow{commutative diagrams/crossing over}{2-3}{4-3}{}
}
\]
to define a pullback cube and we denote the resulting pullback category $X_\pbcube$. A pullback of the given cube corner is then a final object of $X_\pbcube$. 

Note that a cube corner defines us two square corners - the back and front. We denote the pullback categories corresponding to them by $X_{\pbsquare-front}$ and $X_{\pbsquare-back}$. 

In analogy with the proof for 1 categories, we will prove:
\begin{Claim}
The restriction map $X_\pbcube\xrightarrow{res}X_{\pbsquare-front}$ is an equivalence.
\end{Claim}

By our assumption about the back face of the cube, it defines a final object $f_{back}\in X_{\pbsquare-back}$
We have the following commutative diagram of maps:\[
\stik{1}{
X_{\pbcube} \ar{r}{F} \ar{d}{res}\& {X_{\pbsquare-back}}_{/f_{back}} \ar{d}{G}\\
X_{\pbsquare-front} \ar{r}{H} \& X_{\pbsquare-back}
}
\]
where:
\begin{itemize}
    \item $F$ comes from the identification of a cube with a map of squares.
    \item $G$ is the source map.
    \item $H$ is given by composition with the (fixed) cube corner.
\end{itemize}
First note that this square is obviously a pullback square. Now since $f_{back}$ is a final object, the map $G$ is an equivalence, and so $res$ is as well.

To conclude the proof of \autoref{Lem:pullbackcube}: A cube with the given corner is a pullback iff it gives a final object in $X_\pbcube$ iff its front face gives a final object in $X_{\pbsquare-front}$, iff it is a pullback of the front corner.
\end{proof}

\begin{Corollary}
\label{Cor:pullbackNcube}
Let $C$ be an $n$-cube. Suppose that an $n-1$-subcube $C'$ in $C$ is a pullback cube, then $C$ is a pullback cube iff the opposite cube to $C'$ is a pullback cube.
\end{Corollary}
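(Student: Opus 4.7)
The plan is to mimic the $\infty$-categorical argument used for \autoref{Lem:pullbackcube}, simply promoting squares to $(n-1)$-cubes throughout. Let $\pbcube^n$ denote the shape of an $n$-cube and $\pbcubecorner^n$ the shape of an $n$-cube with the source vertex removed (an ``$n$-dimensional corner''). For any category $\Spaces$, restriction along $\pbcubecorner^n\hookrightarrow\pbcube^n$ gives a functor $\Spaces\to\Spaces^{\pbcubecorner^n}$, and by pulling back along a fixed corner $\pt\to\Spaces^{\pbcubecorner^n}$ we obtain the category $X_{\pbcube^n}$ of cones over the corner; an $n$-cube filling the corner is a pullback cube exactly when the corresponding object of $X_{\pbcube^n}$ is terminal.

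An $n$-cube can be viewed as a morphism between two parallel $(n-1)$-subcubes, namely $C'$ and its opposite $C''$. Identifying a cone over the $n$-dimensional corner with a morphism of cones over $(n-1)$-dimensional corners gives a commutative square of categories
\[
\stik{1}{
X_{\pbcube^n} \ar{r}{F} \ar{d}[left]{\text{res}} \& (X_{\pbcube^{n-1}\text{-}C'})_{/f_{C'}} \ar{d}{G}\\
X_{\pbcube^{n-1}\text{-}C''} \ar{r}{H} \& X_{\pbcube^{n-1}\text{-}C'}
}
\]
where $F$ encodes the ``cube as map of $(n-1)$-cubes'' identification, $G$ is the source map of the slice, $H$ composes a cone over the $C''$-corner with the fixed $C'$-face of the ambient corner, and $f_{C'}$ is any chosen filling of the $C'$-subcube. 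First I would check, exactly as in the proof of \autoref{Lem:pullbackcube}, that this square is a pullback of categories: giving a cone over the full $n$-dimensional corner is the same as giving a cone over the $C''$-corner together with a compatible filling of $C'$.

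Next, the assumption that $C'$ is a pullback cube means that $f_{C'}$ is a terminal object of $X_{\pbcube^{n-1}\text{-}C'}$, so the source functor $G$ is an equivalence; hence $\text{res}$ is an equivalence as well. Therefore an object of $X_{\pbcube^n}$ is terminal iff its image in $X_{\pbcube^{n-1}\text{-}C''}$ is terminal, which is the statement that $C$ is a pullback cube iff $C''$ is a pullback cube.

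The only nontrivial point is verifying that the square of categories above is genuinely a pullback, i.e.\ that the data of an $n$-cube filling the corner decomposes without loss into the pair $(C'',\text{morphism }C''\to C')$ in the appropriate $\infty$-categorical sense. This is a routine unwinding of the diagram-category description of $\Spaces^{\pbcube^n}$ as iterated arrow categories of $\Spaces^{\pbcube^{n-1}}$; once it is in place, the rest of the argument is formal and identical to the $n=3$ case already treated.
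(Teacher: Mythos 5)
Your proposal is correct and is essentially the paper's own argument: the paper simply states that the corollary is ``proven in the same way as \autoref{Lem:pullbackcube} by induction on $n$,'' and you have fleshed out exactly that generalization, viewing the $n$-cube as a morphism of $(n-1)$-cubes and rerunning the pullback-square-of-cone-categories argument. The only cosmetic difference is that you argue directly in dimension $n$ rather than phrasing it as an induction, which changes nothing of substance.
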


\begin{proof}
Proven in the same way as \autoref{Lem:pullbackcube} by induction on $n$.
\end{proof}
\end{appendices}

\printbibliography
\end{document}